\DeclareFontFamily{OT1}{rsfs}{}
\DeclareFontShape{OT1}{rsfs}{n}{it}{<-> rsfs10}{}
\DeclareMathAlphabet{\curly}{OT1}{rsfs}{n}{it}
\newtheorem{Thm}{Theorem}[section]
\newtheorem{lem}[Thm]{Lemma}
\newtheorem{prop}[Thm]{Proposition}
\newtheorem{conj}[Thm]{Conjecture}
\newtheorem{Ques}{Question}
\theoremstyle{remark}
\newtheorem{Rem}[Thm]{Remark}
\newtheorem{ex}[Thm]{Example}
\theoremstyle{definition}
\newtheorem{defn}[Thm]{Definition}
\newtheorem{Setup}{Setup}
\newtheorem*{ack}{Acknowledgments}
\newcommand{\Spec}{\mathop{\mathrm{Spec}}\nolimits}
\newcommand{\Proj}{\mathop{\mathrm{Proj}}\nolimits}
\newcommand{\ord}{\mathop{\mathrm{ord}}\nolimits}
\newcommand{\gr}{\mathop{\mathrm{gr}}\nolimits}
\newcommand{\mult}{\mathop{\mathrm{mult}}\nolimits}
\newcommand{\lcth}{\mathop{\mathrm{lct}}\nolimits}
\def\mult{\operatorname{mult}}
\def\dim{\operatorname{dim}}
\def\DH{\operatorname{DH}}
\def\vol{\operatorname{vol}}
\def\deg{\operatorname{deg}}
\def\k{\mathbbm{k}}
\def\R{\mathbb{R}}
\def\C{\mathbb{C}}
\def\G{\mathbb{G}}
\def\Z{\mathbb{Z}}
\def\O{\mathcal{O}}
\def\P{\mathbb{P}}
\def\Q{\mathbb{Q}}
\def\A{\mathbb{A}}
\def\X{\mathcal{X}}
\def\Y{\mathcal{Y}}
\def\F{\mathcal{F}}
\def\W{\mathbb{W}}
\begin{document}

\title[On Fano fibrations after Sun-Zhang]{On Sun-Zhang's theory of 
Fano fibrations - \\ 
weighted volumes, moduli and bubbling 
Fano fibrations}
\dedicatory{In honor of Prof. Caucher Birkar}
\author{Yuji Odaka}

\maketitle
\thispagestyle{empty}

\date{\today}

\tableofcontents

\begin{abstract}
We revisit the recent theory of Sun-Zhang on general Fano fibration 
which emerged from the study of non-compact K\"ahler-Ricci soliton metrics  
\cite{SZ}, primarily from an algebro-geometric perspective. 

In addition to reviewing the existing framework, we present new results, conjectures, and remarks. These include methods for computing weighted volumes via (restricted) volumes, Laplace transforms, and incomplete $\Gamma$-functions, and a 
conjectural 
algebro-geometric construction (``bubbling")  
of Fano fibration with asymptotically conical base 
from degenerating Fano fibration. 
\end{abstract}

\section{Introduction}\label{sec:intro}

We work on algebraic schemes
over an algebraically closed field $\k$, mainly {\it ($\Q$-)Fano fibrations} in a  general sense, i.e. a projective surjective 
morphism $X\xrightarrow{\pi}Y$ where 
$X, Y$ is normal $\k$-varieties, 
$X$ is $\Q$-Gorenstein log terminal, $\pi_*\mathcal{O}_X=\mathcal{O}_Y$, 
$-K_X$ is $\pi$-ample. 
Note that 
classically, ``fibration" often refers to 
those with positive relative dimensions (sometimes with even 
flatness assumption), but here our 
$\pi$ does {\it not} need to be flat nor {\it equidimensional}, 
and can be even birational (e.g., blow downs). 
For this reason, this is also called {\it Fano 
contraction} and is also equivalent to 
{\it extremal contraction} in Mori's theory. 
In the case when the relative dimension 
is positive, 
it is also called {\it Mori fibration} or 
{\it Mori fiber space}, especially when 
the relative Picard number is $1$. 

When we work on some relations with 
complex geometry, we often suppose $\k=\C$ and whenever we 
(sometimes implicitly) use 
the minimal model program, we assume its characteristic is $0$ 
but otherwise the statements hold true for general $\k$. The perspective of \cite{SZ} 
(and hence this paper) 
is primarily 
based on the recent developments in 
K\"ahler geometry and related 
K-stability, as well as birational geometry, 
but provides a new interesting connection. To understand 
that, 
let us look further back at earlier stories. 

\vspace{2mm}

Since the groundbreaking work of S.~Mori \cite{Mori}, 
and subsequent developments worked out by 
Y.~Kawamata, M.~Reid, X.~Benveniste, 
V.~Shokurov, 
J.~Koll\'ar and others (cf., e.g., 
\cite[Chapter 3]{KollarMori} and 
the well-known references therein), 
it has been understood 
that these $\Q$-Fano fibrations in the above sense 
are the basic important structures to understand 
right process of the minimal model program since the Italian school. Indeed, the notion 
includes Castelnuovo's 
$(-1)$-curves contraction, ruled surfaces, Del Pezzo surfaces, Fano manifolds among others 
(e.g., later found flipping contractions). 
The author believes that now 
it should 
go without saying that, as many experts know, 
the theory developed to one of fundamental tools of 
algebraic geometry due to many contributors. 
The major breakthrough in higher dimensional case was 
done in Birkar-Cascini-Hacon-Mckernan's work 
\cite{BCHM}. 

Around the same time as \cite{Mori}, R.Hamilton \cite{Hamilton} 
(with the same publication years!) introduced in differential geometry 
a geometric flow of Riemannian metrics, 
the so-called {\it Ricci flow}, to apply to classification problems of differentiable manifolds.  
Later Perelman \cite{Perelman1, Perelman2} 
developed the idea of {\it Ricci flow with surgery} 
and solved 
the Poincar\'e  conjecture 
as well as 
the geometrization conjecture of Thurston 
(\cite{Thurston}\footnote{again the same publication years as \cite{Mori, Hamilton}!}), 
in geometry of (real) $3$-dimensional manifolds.  

\vspace{2mm}

After the works in complex geometry of H.Tsuji (cf., e.g., 
\cite{Tsuji}) as developped by Cascini-LaNave 
(\cite{CL}),  
Song-Tian (\cite{ST}) among 
others, 
now we understand that 
K\"ahler version i.e., the so-called K\"ahler-Ricci flow is compatible 
with the so-called minimal model program with scaling \cite{BCHM} and 
gives a bridge 
between these two (originally independent) studies. 
The flow (usually) ``stops" at finite time, where some singularities 
evolve, and it is observed that 
(cf., e.g., \cite{EMT11, Nab10, Bam20, CCD2, CHM25}) 
they rescale up to give so-called 
{\it complete 
gradient shrinking K\"ahler-Ricci solitons} ({\it shrinkers}, in  
short) which are self-similar solutions to the 
K\"ahler-Ricci-flow. 

Very recently, S.~Sun and J.~Zhang \cite{SZ} remarkably 
proved that such shrinkers are, at least in smooth case, 
always quasi-projective and even 
admits the {\it Fano fibration} structure, 
precisely in the sense described at the 
beginning of this introduction. 
Their work precisely connected the 
theory back to the original finding of Mori \cite{Mori}. 
Their proof cleverly uses the variation of K\"ahler quotients 
along the perturbations of soliton vector fields, consider their  
birational behaviour which somewhat parallels variation of GIT quotient 
(VGIT), and then apply a deep boundedness result of C.Birkar (\cite{BAB}). 
Note that Birkar has also already developed 
various boundedness type results for Fano fibrations and control of 
singularities of the base (cf., \cite{Birkar.before, BAB, 
Birkar.bdd, Birkar.sing, BC}).

After proving the above mentioned structure 
theorem, 
Sun-Zhang \cite{SZ} 
further introduced an invariant of 
Fano fibration (germ), as 
a notable enhancement of the bridge, which 
they call {\it weighted volume} 
and denote as $\W(-)$. 
This theory conjecturally leads to 
the generalized theory of K-stability, (K-)moduli space for Fano fibrations 
in the context of canonical K\"ahler metrics and K-stability 
theories. So, it 
extends the K-stability theory of 
Fano varieties, for instance. (For the case when $\pi={\rm id}$, i.e., the case of 
log terminal cones, 
see e.g., \cite{LLX, Od24a} and references 
therein). 

This note means to be a supplement to their notable work, especially on
algebro-geometric sides including moduli discussion and 
bubbling along a degeneration of Fano fibrations. 
The bulk of the paper is of expository nature, but
we also include various new propositions 
and small ideas. To make the exposition 
relatively self-contained, many parts are devoted to review of the theory of Sun-Zhang 
from purely algebro-geometric side. 


\subsection{Formulae 
of weighted volumes via (restricted) volumes}

Now we recall the definition 
of weighted volume $\W(-)$ 
and discuss basic properties. 
This integrates the earlier analytic definitions by 
\cite[\S 7]{CDS} (cf., also \cite{TZ} for when $Y$ is a point). 
We start with re-writing their definition in an equivalent way 
for future computation purpose. 
For that, we first prepare a non-compact variant as a slight generalization 
of restricted volume (\cite{ELMNP}). 

\begin{lem}[Restricted volume in non-compact setup]\label{ncrv}
For a projective morphism $\pi\colon X\to Y$ from 
normal $X$ over a normal 
affine variety $Y$, consider 
a relative ample line bundle $L$ on $X$ and a subscheme 
$Z$ inside a closed fiber $\pi^{-1}(p)$ for a closed 
point $p\in Y$. 
Take normal projective compactifications  
$X\subset \overline{X}$, 
$Y\subset \overline{Y}$, 
and an extension $\overline{\pi}$ of 
$\pi$ as $\overline{X}\to \overline{Y}$. 
Set the divisorial part of $\overline{Y}\setminus Y$ as $D$, which we assume to be an ample Cartier divisor, and 
extension of $L$ 
to $\overline{X}$ as $\overline{L}$ (still relatively ample). 
\begin{enumerate}
\item (well-definedness) \label{resi}
If one considers the restricted volume 
\begin{align}\label{consider.resvol}
\vol_{\overline{X}|Z}(\overline{L}+a\pi^*D) \text{ (cf., \cite{ELMNP})}
\end{align}
for $Z\subset \pi^{-1}(p)$ and $a\in \Q$, the following holds: 
there is a positive rational number $a_0$ such that for any $(\Q_{>0}\ni) a\ge a_0$, the above \eqref{consider.resvol} is constant and 
does not depend on 
the compactification data, i.e., just determined by 
$X,Z,L,Y$ and $\pi$. We denote it simply as 
$$\vol_{X|Z}(L).$$ From the definition, 
one can consider the same for any ($\pi$-ample) 
$\Q$-line bundle $L$. 
\item (uniformity of $a$) \label{resii}
Note that from the proof, $a_0$ 
depends on $L$ and $\overline{L}$. Nevertheless, 
if we fix $\overline{X},\overline{Y},\overline{\pi}$ and 
$L$, 
consider $\overline{L}_1, \overline{L}_2$ on $\overline{X}$ 
and an interval $(c_1,c_2)$ with $\overline{L}_i|_X=L_i$, then $a$ for \eqref{resi} 
can be taken uniformly (i.e., $a_0$ can be taken as a constant) 
for any $L_1\otimes L_2^{\otimes c}$ 
with $c\in (c_1,c_2)$ as far as $L_1\otimes L_2^{\otimes c}$ 
is relatively $\pi$-ample over $Y$. 
\end{enumerate}
\end{lem}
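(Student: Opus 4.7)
The plan is to identify the putative constant value of $\vol_{\overline{X}|Z}(\overline{L}+a\pi^*D)$ with the intrinsic quantity
\begin{equation*}
V(L|_Z) := \lim_{m \to \infty} \frac{(\dim Z)!}{m^{\dim Z}} \dim H^0(Z, mL|_Z),
\end{equation*}
which visibly depends only on $X, L, Z$. Once this identification is made, both assertions in (\ref{resi}) --- constancy in $a$ and independence of the compactification data --- follow at once, and part (\ref{resii}) reduces to a uniform version of the same argument.

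For (\ref{resi}), I would first produce the ampleness threshold: since $\overline{L}$ is $\overline{\pi}$-ample and $D$ is ample on $\overline{Y}$, the standard relative-to-absolute ampleness criterion yields $a_0=a_0(\overline{L},D)$ such that $\overline{L}+a\pi^*D$ is an ample $\Q$-line bundle on $\overline{X}$ for every $a \geq a_0$; for such $a$, the restricted volume \eqref{consider.resvol} is well defined in the sense of \cite{ELMNP}. The key geometric input is that, since $p \in Y = \overline{Y} \setminus D$, the fiber $\overline{\pi}^{-1}(p)$ is disjoint from $\overline{\pi}^{-1}(D)$, so $\pi^*D|_Z$ is trivial and $(\overline{L}+a\pi^*D)|_Z = L|_Z$ regardless of $a$ and of the compactification. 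Applying Serre vanishing to the ideal-sheaf sequence
\begin{equation*}
0 \to \mathcal{I}_Z \otimes m(\overline{L}+a\pi^*D) \to m(\overline{L}+a\pi^*D) \to mL|_Z \to 0
\end{equation*}
twisted by the ample line bundle $\overline{L}+a\pi^*D$, we get $H^1(\overline{X}, \mathcal{I}_Z \otimes m(\overline{L}+a\pi^*D))=0$ for $m \gg 0$, so the restriction map $H^0(\overline{X}, m(\overline{L}+a\pi^*D)) \twoheadrightarrow H^0(Z, mL|_Z)$ is surjective for all sufficiently large $m$. Passing to the asymptotic limit yields $\vol_{\overline{X}|Z}(\overline{L}+a\pi^*D) = V(L|_Z)$, closing (\ref{resi}) with the definition $\vol_{X|Z}(L) := V(L|_Z)$.

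For (\ref{resii}), I would rerun the argument for the one-parameter family $\overline{M}_c := \overline{L}_1 \otimes \overline{L}_2^{\otimes c}$, $c \in (c_1,c_2)$, and need only a uniform ampleness threshold $a_0$ such that $\overline{M}_c + a\pi^*D$ is ample on $\overline{X}$ for every $c \in (c_1,c_2)$ and every $a \geq a_0$. Since ampleness is an open condition in $N^1(\overline{X})_{\R}$ and the segment $\{\overline{M}_c\}_{c \in [c_1',c_2']}$ is compact for each closed sub-interval $[c_1',c_2'] \subset (c_1,c_2)$, a uniform $a_0$ exists on each such sub-interval by compactness-plus-openness; extending this to the full open interval follows by a routine monotonicity/exhaustion argument, after which part (\ref{resi}) produces the common value.

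The main technical obstacle, as I foresee it, is exactly this uniformity: one must control the ampleness threshold $a_0(c)$ as $c$ approaches the endpoints of $(c_1,c_2)$ and rule out that it blows up. This is handled either by a Seshadri-type bound along a segment in the $\overline{\pi}$-ample cone of $N^1(\overline{X})$, or alternatively by Fujita's uniform vanishing applied to the bounded family $\overline{L}_1 + c\overline{L}_2$; in either case, the hypothesis that $L_1 \otimes L_2^{\otimes c}$ remains $\pi$-ample throughout $(c_1,c_2)$ is what prevents the threshold from diverging.
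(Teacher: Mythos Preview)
Your argument for \eqref{resi} is correct but proceeds differently from the paper. The paper pushes the computation down to $\overline{Y}$: it rewrites the restricted volume as a quotient of $H^0$ of pushforward sheaves on $\overline{Y}$, uses relative Serre vanishing for $\overline{\pi}$ to set up a short exact sequence with a skyscraper at $p$, and then uses ampleness of $D$ on $\overline{Y}$ to kill $H^1$ there. The resulting intrinsic quantity is $\limsup_l \frac{\dim(\pi_*L^{\otimes l}/\pi_*(I_Z L^{\otimes l}))}{l^d/d!}$, i.e.\ the asymptotic dimension of the \emph{image} of restriction from $X$. You instead stay upstairs on $\overline{X}$, make $\overline{L}+a\pi^*D$ globally ample, and apply absolute Serre vanishing to get surjectivity onto $H^0(Z,mL|_Z)$ directly, arriving at the intrinsic quantity $V(L|_Z)=\limsup_m \frac{d!}{m^d}\dim H^0(Z,mL|_Z)$. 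The two descriptions agree asymptotically (relative Serre vanishing over affine $Y$ forces eventual surjectivity of $H^0(X,L^{\otimes l})\to H^0(Z,L^{\otimes l}|_Z)$), so both are valid; your route is shorter, while the paper's description as an image-of-restriction is closer in spirit to the restricted volume and to how it is used downstream (e.g.\ in the computation of $\vol_{X'|E}$ in Proposition~\ref{DHcal}).

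For \eqref{resii} the paper takes a different tack: it blows up $Z$ in $\overline{X}$, obtaining an exceptional Cartier divisor $E$, and chooses $a$ so that $\varphi^*(\overline{L}_1\otimes\overline{L}_2^{\otimes c}(a\overline{\pi}^*D))-\tfrac{1}{l}E$ is relatively ample over $\overline{Y}$ uniformly in $c\in(c_1,c_2)$ and $l\gg 0$; this reduces uniformity to a single relative-ampleness condition on a compact segment. Your compactness-in-$N^1(\overline{X})_{\R}$ argument is reasonable and morally equivalent, but note that it implicitly needs $\overline{L}_1+c\overline{L}_2$ to be $\overline{\pi}$-relatively ample (not just $\pi$-ample after restriction to $X$) in order for adding $a\pi^*D$ to yield global ampleness; the paper's blow-up formulation sidesteps this by working relatively over $\overline{Y}$ throughout.
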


\begin{proof}
Firstly, we prove \eqref{resi}. 
From the definition of the restricted volume (\cite{ELMNP}), if we set $d:=\dim Z$ and 
denote the ideal sheaf for $Z\subset \overline{X}$ as $I_Z$, it follows that 
\begin{align*}
\vol_{\overline{X}|Z}(\overline{L}+a\pi^*D)&:=\limsup_{la\in \Z, l\to \infty}
\dfrac{\dim {\rm Im}(H^0(\overline{X},\overline{L}^{\otimes l}(al\overline{\pi}^*D))\to 
H^0(Z,\overline{L}^{\otimes l}(al\overline{\pi}^*D)|_Z)))}{l^{d}/d!}\\
&=\limsup_{la\in \Z, l\to \infty}
\dfrac{\dim (H^0(\overline{X},\overline{L}^{\otimes l}(al\overline{\pi}^*D))/H^0(\overline{X},I_Z \overline{L}^{\otimes l}(al\overline{\pi}^*D)))}{l^{d}/d!}\\ 
&=\limsup_{la\in \Z, l\to \infty}
\dfrac{\dim (H^0(\overline{Y},((\overline{\pi}_* \overline{L}^{\otimes l})(al D)))/H^0(\overline{Y},(\overline{\pi}_* (I_Z \overline{L}^{\otimes l}))(alD)))}{l^{d}/d!}. 
\end{align*}
Both 
$\overline{\pi}_*\overline{L}^{\otimes l}$ and 
$(\overline{\pi}_* (I_Z \overline{L}^{\otimes l}))$ are coherent for any $l$ and 
we have a short exact sequence 
\begin{align}\label{se:push}
0\to (\overline{\pi}_*(I_Z \overline{L}^{\otimes l}))(alD)
\to (\overline{\pi}_* \overline{L}^{\otimes l})(alD)
\to k(p)^{\oplus r(l)}\to 0 \text{ for }l\gg 0
\end{align}
where $k(p)$ denotes the skyscraper sheaf isomorphic to 
$\k$ and 
$r(l)$ is some positive integer sequence, 
since $R^1\overline{\pi}_*(I_Z \overline{L}^{\otimes l}(alD))=0$ 
for $l\gg 0$. If we take long exact sequence of $H^i(-)$ of 
\eqref{se:push}, we obtain 
\begin{align}\label{se:hi}
0\to H^0(\overline{Y},(\overline{\pi}_*(I_Z \overline{L}^{\otimes l}))(alD))
\to H^0(\overline{Y},(\overline{\pi}_* \overline{L}^{\otimes l})(alD))
\to k(p)^{\oplus r(l)}\to 0 \text{ for }l\gg 0
\end{align}
as far as $H^1(\overline{Y},(\overline{\pi}_*(I_Z \overline{L}^{\otimes l}))(alD))$,  
which holds for $a\gg 0$ since $D$ is ample. 
Hence, the above quantities can be simplified as 
\begin{align*}
&\limsup_{l\to \infty}
\dfrac{\dim (H^0(\overline{Y},(\overline{\pi}_* \overline{L}^{\otimes l})(a l D))/H^0(\overline{Y},(\overline{\pi}_* (I_Z \overline{L}^{\otimes l}))(a lD))}{l^{d}/d!}\\
=&
\limsup_{l\to \infty}
\dfrac{\dim (\pi_* L^{\otimes l}/\pi_* (I_Z L^{\otimes l}))}{l^{d}/d!}, 
\end{align*}
where the last equality holds because 
$\overline{\pi}_* \overline{L}^{\otimes l}/\pi_* (I_Z \overline{L}^{\otimes l})$ is supported on $p\in Y$ for any $l$. 

For \eqref{resii}, it follows from the proof of 
above \eqref{resi} as follows. 
Set the normalization of the blow up of $Z\subset \overline{X}$ as 
$\varphi\colon X'={\rm Bl}_{Z}(\overline{X})^{\nu}\to X$ 
with the exceptional Cartier divisor $E:=V((\varphi\circ \nu)^{-1}(I_Z))$. 
We can and do take large enough uniform 
$a$ so that $\varphi^*(\overline{L}_1\otimes \overline{L}_2^{\otimes 
c}(a\overline{\pi}^*D))-\frac{1}{l}(E))$ 
are all relatively ample (as $\Q$-line 
bundle) over $\overline{Y}$ for any $c\in (c_1,c_2)$ and $l\gg 0$. 
This completes the proof. 
\end{proof}
By using above, we rewrite the inspiring notion of 
{\it weighted volume} of Sun-Zhang 
\cite[\S 4]{SZ}, defined 
after its analytic (symplectic) version in 
\cite{CDS}, 
in terms of the restricted volume as 
follows. 
Now, our setup is restricted as follows as 
in \cite[\S 4]{SZ}. 

\begin{Setup}\label{set:Ff}
We take a Fano fibration $\pi\colon X\to Y$ with ${\rm dim}(X)=n$, 
and fix a closed point $p\in Y$. 

Consider general (real valued) valuation $v$ of 
$K(X)$, 
the function field of $X$, whose 
center is inside $\pi^{-1}(p)$ and 
log discrepancy is finite i.e., $A_X(v)<\infty$, 
which we call {\it vertical} valuation (over $p$). 
 
For such $v$, 
\cite[\S 4]{SZ} defines a real-valued invariant which they call 
the {\it weighted volume} 
$\W(v)$. 
We quickly review the original definition of $\W(-)$ in their 
excellent paper and then just give several different expression for further 
works. We first fix $r\in \Z_{>0}$ such that $-rK_X$ is Cartier 
just for convenience of notational convenience, 
and set $L:=\mathcal{O}(-rK_X)$. 
For simpler setup, one can always assume $X$ is smooth and $r=1$, 
and general case has no essential difference of the theory. 
\end{Setup}

\begin{defn}\label{def:DH}
For such $v$, we can define two types of {\it (algebraic) 
Duistermaat-Heckman type measures} 
\footnote{Note that the original 
Duistermaat-Heckman measure \cite{DH} 
was in symplectic geometric setup i.e., 
as a measure on the image of moment maps. 
This is later systematically studied 
in the context of 
K\"ahler geometry, or test configurations 
(\cite{Don02}), 
notably by Hisamoto 
\cite{Hisamoto1, Hisamoto2} and further in 
algebro-geometric setup in 
Boucksom-Hisamoto-Jonsson \cite{BHJ}.} 
on $\R$: 
\begin{enumerate}
\item \label{def:DH1} 
(Fiber type: cf., \cite{SZ}) ${\rm DH}(v)={\rm DH}_f(v)$ 
is the limit of the following quantized version. 
For $l\in \Z_{>0}$, we  define 
\begin{align}
&{\rm DH}_l(v)={\rm DH}_{f,l}(v)\\
:=&\frac{1}{(lr)^n}\sum_{x\ge 0}\dim (\mathcal{F}_v^{x lr}H^0(-lrK_X)/\mathcal{F}_v^{>x lr}H^0(-lrK_X)) \delta_x.
\end{align}
Here, $\delta_x$ denotes the Dirac measure supported at $x\in \R$, and 
$\F_v$ is defined as a filtration 
(cf., Definition \ref{def:1.3} for the general definition) 
\begin{align}\label{67}
&\mathcal{F}_v^{x lr}H^0(-lrK_X):=\{s\in H^0(-lrK_X)\mid v(s)\ge x lr\}\\
&\mathcal{F}_v^{>x lr}H^0(-lrK_X):=\{s\in H^0(-lrK_X)\mid v(s)> x  lr\}. 
\end{align}
This is a certain variant of 
(rescaled) weight measure in the sense of \cite[1.5]{BHJ}. 
Then, we consider the limit measure 
$${\rm DH}(v):={\rm DH}_f(v):=\lim_{l\to \infty}{\rm DH}_{f,l}(v).$$
\cite{SZ} uses this for their definition of weighted volume. 
Following it, we use this for a while. 

We sometimes denote the above measure ${\rm DH}(v)$ as 
${\rm DH}_X(v)$ or ${\rm DH}_{X,f}(v)$, to avoid confusion. 

\item \label{def:DH2} (Base type: cf., \cite[\S 2, 2.17]{Od24a}) 
As in \cite[Definition 2.5]{SZ}, 
suppose further 
$X$ and $Y$ are both given algebraic 
actions of an algebraic (split) torus $T\simeq \G_m^r$, 
so that 
\begin{itemize}
\item $T$-action on $Y$ is good and $\xi \in N\otimes \R$ is a positive vector field ((abstract) Reeb vector field) cf., e.g., \cite{CS}, \cite[\S 2]{Od24a}). 
\item the Fano fibration morphism $\pi\colon X\to Y$ is $T$-equivariant. 
\end{itemize}
We denote $\Gamma(\pi_* \mathcal{O}_X(-lrK_X))$ 
as  $R_l$ for $l\in \Z_{\ge 0}$ 
and its $\vec{m}$-eigen subspace (with respect to the 
$T$-action) as $R_{l,\vec{m}}$ for $\vec{m}\in M:={\rm Hom}(T,\G_m)$ i.e., with any character $\vec{m}$ of $T$. 
Then, we can define 
another (base type) Duistermaat-Heckman measure 
${\rm DH}_b$ as in \cite[Definition 2.17]{Od24a}. For its definition, 
we first fix range of $\vec{m}$ and then consider Dirac type measures, 
and then take limit probability measure ${\rm DH}_b$ on $\R$ 
defined as 
$$\lim_{c\to \infty}
\biggl(\sum_{\substack{l\in \Z_{\ge 0}, \\ 
\vec{m}\in M\setminus \{\vec{0}\}, \langle 
\vec{m},\xi\rangle <c}}\dfrac{\dim(R_{l,\vec{m}})}
{\sum_{\vec{m}\in M\setminus \{\vec{0}\}, \langle 
\vec{m},\xi\rangle <c}\dim(R_{\vec{m}})}\quad \delta_{\frac
{l}{\langle m,\xi 
\rangle}}\biggr).$$
We omit the details for now. See Definition \ref{def:eqff} and later discussions for further studies on this setup, which do not really use 
the above base type Duistermaat-Heckman measure yet. 
\end{enumerate}
\end{defn}

We generalize the above Duistermaat-Heckman measure of fiber type i.e., 
Definition \ref{def:DH} \eqref{def:DH1} as follows. 

\begin{defn}\label{def:1.3}
In the above Setup \ref{set:Ff}, in this paper, 
\begin{enumerate}
    \item {\it (vertical) filtration} $\mathcal{F}^\bullet$ of $\{\pi_*\O_X(-lrK_X)\}_{l\in \Z_{>0}}$ means 
    the sub-indexed data of 
    $\mathcal{O}_Y$-modules 
    $\mathcal{F}^{x lr}\pi_*\O_X(-lrK_X)$ for each $x\in \R_{\ge 0}$ such that 
    \begin{itemize}
    \item $(\mathcal{F}^{x lr}\pi_*\O_X(-lrK_X))|_{Y\setminus p}
    =\pi_*\O_X(-lrK_X))|_{Y\setminus p}$, 
        \item $\mathcal{F}^{x' lr}\pi_*\O_X(-lrK_X)\subset  \mathcal{F}^{x lr}\pi_*\O_X(-lrK_X)$ if $x'\ge x$, 
        \item $\mathcal{F}^{x lr}\pi_*\O_X(-lrK_X)\cdot 
        \mathcal{F}^{x l'r}\pi_*\O_X(-l'rK_X)\subset 
        \mathcal{F}^{x lr}\pi_*\O_X(-(l+l')rK_X)$. 
        We suppose this $\F$ contains 
        $F_v$ of the form \eqref{67} for some (vertical) $v$ 
        and set its {\it (relative) volume function}
        \footnote{note that this corresponds to minus the volume function of that of \cite{HanLi}, to match with the convention of \cite{Li} and earlier works}
        as 
        \begin{align}\label{vollambda}
        \vol\mathcal{F}^x:=\lim_{l\to \infty}\dim(\pi_*\O_X(-lrK_X)/\F^{x rl
        }\pi_*\O_X(-lrK_X)) \quad (\in \R_{\ge 0}). 
        \end{align}
        Here, $c$ is some fixed real constant. 
        By \cite[Appendix]{SZ} 
        using the Okounkov body (cf., also 
        \cite{BC}), combining with our Lemma \ref{ncrv}, 
        $$\frac{1}{(rl)^n}\frac{d}{dx}\dim(\pi_*L^{\otimes l}/\F^{x lr}\pi_*L^{\otimes l})$$ as a distribution weakly 
        converges to some measure on $\R$ for $l\to \infty$, 
        which we denote as ${\rm DH}_X(\F)={\rm DH}(\F)$ 
        and call it {\it Duistermaat-Heckman measure (of fiber type)} 
        for $\mathcal{F}$. 
    \end{itemize}
    \item {\it (vertical) ideal} $I\subset \O_X$ means an ideal sheaf 
    such that $I|_{\pi^{-1}(Y\setminus p)}=\O_{X}|_{\pi^{-1}(Y\setminus p)}$. 
    \item {\it (vertical) graded ideals} $\{I_l\}_{l\in \Z_{\ge 0}}$ means 
    that $I_l$ is vertical ideal of $\O_X$ such that $I_l\cdot I_{l'}\subset I_{l+l'}$ for any $l, l'\in \Z_{\ge 0}$. 
    Note that there is naturally associated 
    (vertical) filtration defined as 
    $\mathcal{F}^{x lr}\pi_*\mathcal{O}_X(-lrK_X):=
    \pi_*(I_l^{\lceil xlr \rceil}\cdot \mathcal{O}_X(
    \lfloor -lrK_X\rfloor )).$ We denote that as $\mathcal{F}_{I_\bullet}$. 
    \item 
    {\it (vertical) ideal $I$ with exponent $m$} simply  means that $I$ is a vertical ideal of $\O_X$ and 
    $m\in \Z_{>0}$. For that, we define 
    a vertical filtration $\F_{I,m}$ as 
    $\F_{I,m}\pi_* L^{\otimes l}:=\pi_* (I^{\lceil \frac{l}{m}\rceil}L^{\otimes l})$. 
\end{enumerate}

\end{defn}

\begin{defn}[{\cite[\S 4]{SZ}}]
In the above Setup \ref{set:Ff}, 
the {\it weighted volume} for above $v$ by Sun-Zhang is defined as 
$$\W(v):=e^{A_X(v)}\cdot \int_{\R_{\ge 0}\ni x}e^{-x}{\rm DH}(v),$$
where ${\rm DH}(v)$ is as Definition \ref{def:DH} \eqref{def:DH1}. 
Similarly, for (vertical) graded ideals $I_\bullet=\{I_l\}_l$, 
we can define its weighted volume 
$$\W(I_{\bullet}):=e^{\lcth(X;I_\bullet)}\cdot \int_{\R_{\ge 0}} e^{-x}{\rm DH}(\mathcal{F}_{I_\bullet}).$$
As a special case, 
$$\W(I):=e^{\lcth(X;I)}\cdot \int_{\R_{\ge 0}} e^{-x}{\rm DH}(\mathcal{F}_{I}).$$
These ${\rm DH}$ are as defined in Definition \ref{def:1.3}. 
By integration by part, using the 
good properties of 
``weight" function $e^{-x}$, 
we also have different expression as 
\begin{align}
\log\W(v)&=A_X(v)+\log\int_0^\infty e^{-x}\vol\F_v^x dx,\\
\log\W(I_\bullet)&=\lcth(X;I_\bullet)+\log
\int_0^\infty e^{-x}\vol\F_{I_\bullet}^x dx. 
\end{align}
\end{defn}

Then, \cite{SZ} defines the weighted volume of Fano fibration as follows: 

\begin{defn}[Weighted volume of Fano fibration {\cite[Definition 6.5]{SZ}}]
For a Fano fibration $X\xrightarrow{\pi}Y$, 
the weighted volume means 
\begin{align}\label{def:wv}
\W(\pi):=\inf_v \W(v),
\end{align}
where $v$ runs over all (real valued) valuation 
whose center is supported inside $\pi^{-1}(p)$ 
and $A_X(v)<\infty$ (Setup \ref{set:Ff}). 
Note that the 
Conjecture \ref{conj:SZ2step} ($=$ review of 
\cite[Conjecture 6.4]{SZ}) 
would imply that the infimum is actually the minimum. 
\end{defn}
The following viewpoint (cf., \cite[\S 2]{Od24a}) is important for our purpose. 
\begin{lem}[As degeneration]\label{asdegen}
There is a rational polyhedral cone $\sigma$ of $N\otimes \R$ and corresponding 
affine toric variety $U_\sigma$ with its (unique) $T$-invariant closed point 
$p_\sigma$, 
together with $T$-equivariant morphisms 
$\Pi_\sigma\colon \mathcal{X}_\sigma\to \Y_\sigma\xrightarrow{f_\sigma} 
U_\sigma$ 
whose general fibers are $X\xrightarrow{\pi}Y$ and the closed 
fiber over $p_\sigma$ i.e., 
$T\curvearrowright (\Pi_\sigma^{-1}(p_\sigma)\to f_\sigma^{-1}(p_\sigma))$ 
is $T\curvearrowright (X_v\to Y_v)$. 
\end{lem}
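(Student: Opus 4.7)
The plan is to realize the degeneration as a multi-graded Rees construction over a toric base. Since the $T$-action on $X$ and $Y$ is fixed (Definition \ref{def:DH} \eqref{def:DH2}) and the valuation $v$ is vertical over $p$ with $A_X(v)<\infty$, I first choose a rational polyhedral cone $\sigma\subset N\otimes\R$ large enough to contain the Reeb direction $\xi$ from Setup \ref{set:Ff} together with (a rational approximation of) the ray generated by $v$. The affine toric variety $U_\sigma:=\Spec\k[\sigma^\vee\cap M]$ then has a unique $T$-fixed closed point $p_\sigma$ corresponding to the maximal ideal $\mathfrak{m}_\sigma=(\chi^{\vec m}:\vec m\in \sigma^\vee\setminus\{0\})$.

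Next I would assemble the total space $\Y_\sigma\to U_\sigma$ as the relative $\Spec$ of the Rees algebra of the vertical filtration $\F_v$ on $\O_Y$, twisted by the $T$-weight decomposition. Concretely, writing each $T$-eigenspace of $\O_Y$ as $\O_{Y,\vec m}$, the $\O_{U_\sigma}$-algebra
\begin{align*}
\bigoplus_{\vec m\in \sigma^\vee\cap M}\chi^{\vec m}\cdot \F_v^{\langle \vec m,\xi\rangle}\O_{Y,\vec m}
\end{align*}
defines $\Y_\sigma$, with $f_\sigma$ the induced structure map. Applying the same recipe to the graded family $\{\pi_*\O_X(-lrK_X)\}_{l\ge 0}$ produces an $\O_{\Y_\sigma}$-algebra whose relative $\Proj$ is $\X_\sigma$, and $\Pi_\sigma$ is the induced morphism. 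Both maps are $T$-equivariant by construction, since the Rees construction is done $T$-weight by $T$-weight.

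Then I would verify the fiber identifications. Over the open torus orbit $\Spec\k[M]\subset U_\sigma$ each $\chi^{\vec m}$ is invertible, the Rees grading becomes trivial, and one recovers the original $X\xrightarrow{\pi}Y$. Over $p_\sigma$, the fiber is computed as the associated graded of the filtration, which by definition of $X_v\to Y_v$ (the degeneration attached to $\F_v$, cf.\ \cite[\S 2]{Od24a}) produces exactly $T\curvearrowright (X_v\to Y_v)$ as required.

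The hard part is finite generation: for $\X_\sigma$ and $\Y_\sigma$ to be genuine algebraic schemes with the claimed projective surjective morphisms, the Rees algebra must be finitely generated over $\k[\sigma^\vee\cap M]$. When $v$ is divisorial or quasi-monomial with respect to a $T$-invariant log resolution, this follows from standard finite generation of the relative anticanonical ring combined with toric combinatorics on $\sigma^\vee\cap M$. For a general real vertical valuation $v$, one approximates by rational rays lying inside $\sigma$; since only $\sigma$ (not $v$ itself) parametrises the base and the relevant volumes behave continuously in $c$ by Lemma \ref{ncrv} \eqref{resii}, passing to the limit yields the desired family.
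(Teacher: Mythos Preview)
Your overall strategy---build a multi-graded Rees algebra over an affine toric base and identify the general and special fibers---is exactly the mechanism behind the references the paper cites (\cite[Theorem~2.11]{Od24a}, \cite{Teissier}, \cite{LX}). In that sense you are on the right track, and the paper's own proof says no more than ``same arguments as \cite[2.11]{Od24a}''.

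However, your execution conflates two different tori. In the setting of this lemma there is \emph{no} pre-existing $T$-action on $X$ or $Y$ and no Reeb vector $\xi$: Setup~\ref{set:Ff} is the general (non-equivariant) Fano fibration setup, and the equivariant data of Definition~\ref{def:DH}\eqref{def:DH2} are irrelevant here. The lattice $N$, torus $T$, and cone $\sigma$ in the statement come from the value (semi)group of $v$ itself, as in \cite{Teissier,LX} and as spelled out later in the paper (\S\ref{sec:2stepreview}): $M$ is the groupification of ${\rm Im}(v)$, $N$ its dual, $T=N\otimes\G_m$. Consequently your displayed Rees algebra is not well-posed: there are no eigenspaces $\O_{Y,\vec m}$ to decompose into, and the filtration index should be the value $v$ takes, not a pairing $\langle\vec m,\xi\rangle$ with an extraneous $\xi$. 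The correct object is (schematically) $\bigoplus_{\gamma\in\sigma^\vee\cap M}\mathcal{F}_v^{\gamma}\cdot\chi^{\gamma}$, where the grading is by the value semigroup, and likewise for the anticanonical ring upstairs.

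Your final paragraph on finite generation is also off. Approximating an irrational $v$ by rational rays and invoking continuity of volumes (Lemma~\ref{ncrv}\eqref{resii}) says nothing about whether $\gr_v$ is finitely generated; volume continuity is a numerical statement, not an algebraic one. In the paper, finite generation of $\gr_v(\oplus_l H^0(L^{\otimes l}))$ and $\gr_v(\O_{Y,p})$ is taken as a \emph{hypothesis} (see the Setup in \S\ref{sec:2stepreview}), precisely so that $X_v$, $Y_v$ and hence the family $\mathcal{X}_\sigma\to\mathcal{Y}_\sigma\to U_\sigma$ make sense. Once that is assumed, the Rees construction of \cite{Teissier,LX,Od24a} goes through without any limiting procedure.
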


\begin{proof}
It follows from the same arguments as \cite[Theorem 2.11]{Od24a} 
(also cf., the references therein: \cite{Teissier, LX}). 
\end{proof}

\begin{prop}\label{prop:1.7}
   $\W(\pi)$ can be re-written as follows: 
\begin{align}
 \label{8}   \W(\pi)&=\inf_{\{I_\bullet\} :\text{ vertical} }e^{\lcth(X;I_\bullet)}
    \int_{\R_{\ge 0}} e^{-x}{\rm DH}(\mathcal{F}_{I_\bullet})\\ 
 \label{9}   &=\inf_{I :\text{ vertical}, \hspace{1mm}m\in \Z_{>0} }e^{m\lcth(X;I)}
    \int_{\R_{\ge 0}} e^{-x}{\rm DH}(\mathcal{F}_{I,m}). 
\end{align}
Moreover, $\W(v)$ can be also written as 
\begin{align}\label{def:wv2}
\W(\pi)=\inf_{v\colon {\text{ divisorial}}}\W(v), 
\end{align}
where $v$ runs over only divisorial valuations 
in the following sense: 
of the form $v=\frac{{\rm ord}_E}{b}$ 
where $E\subset Y\xrightarrow{\varphi} X$ 
is some blow up with exceptional prime divisor $E$ 
and $b$ is a positive rational number. 
\end{prop}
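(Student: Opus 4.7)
The plan is to adapt the standard chain of equalities from K-stability and normalized-volume theory (compare Blum-Jonsson, Li-Xu, Xu-Zhuang in the Fano-cone setting, together with Lemma \ref{ncrv} above and the appendix of \cite{SZ} in the present relative setting) and handle the three equalities in turn.

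\textbf{Step 1 (equation \eqref{8}).} For the $\ge$ direction, given a vertical valuation $v$ with $A_X(v)<\infty$, form its graded sequence of vertical ideals $I_l(v):=\{f\in\O_X:v(f)\ge l\}$. By construction the filtration $\F_{I_\bullet(v)}^\bullet$ agrees with $\F_v^\bullet$ (up to the normalization by $r$), so the DH measures coincide; combined with the classical bound $\lcth(X;I_\bullet(v))\le A_X(v)$, this yields $\W(I_\bullet(v))\le \W(v)$, and taking infimum gives one inequality. For the reverse, given a vertical $I_\bullet$ with $c:=\lcth(X;I_\bullet)<\infty$, I would invoke the extraction theorem of Jonsson-Mustata (via the relative MMP of \cite{BCHM}) to produce a divisorial valuation $v=\ord_E/b$, centered inside $\pi^{-1}(p)$, computing $\lcth$: after rescaling so $v(I_\bullet):=\lim_l v(I_l)/l=1$ one has $A_X(v)=c$. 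By Fekete subadditivity $v(I_l)\ge l$ for every $l$, hence $I_l\subset\{f:v(f)\ge l\}$ and $\F_{I_\bullet}^x\subset \F_v^x$ at every level. Thus $\vol\F_v^x\le \vol\F_{I_\bullet}^x$ pointwise, giving
\[
e^{A_X(v)}\int_0^\infty e^{-x}\vol\F_v^x\,dx \;\le\; e^{\lcth(X;I_\bullet)}\int_0^\infty e^{-x}\vol\F_{I_\bullet}^x\,dx,
\]
i.e.\ $\W(v)\le \W(I_\bullet)$. This simultaneously proves \eqref{def:wv2}, since the extracted $v$ is by construction divisorial.

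\textbf{Step 2 (equation \eqref{9}).} The assignment $(I,m)\mapsto I_\bullet:=\{I^{\lceil l/m\rceil}\}_l$ satisfies $\lcth(X;I_\bullet)=m\cdot\lcth(X;I)$ and produces exactly the filtration $\F_{I,m}$, so one inequality is immediate. For the other, I would use the approximation-by-truncation argument: for any vertical $I_\bullet$, the pairs $(I_m,m)$ should satisfy $\W(I_m,m)\to \W(I_\bullet)$ as $m\to\infty$. This reduces to (a) $m\lcth(X;I_m)\to \lcth(X;I_\bullet)$ (Jonsson-Mustata subadditivity) and (b) weak convergence of the corresponding relative DH measures, which follows from the Okounkov-body asymptotics of the appendix of \cite{SZ} combined with Lemma \ref{ncrv}; dominated convergence under the weight $e^{-x}$ then yields the limit.

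The main obstacle I foresee is the $\le$ direction of Step 1: the extraction of a divisorial valuation computing $\lcth(X;I_\bullet)$ must be carried out in the vertical / local setting over a possibly non-compact affine base $Y$, which requires the relative MMP and the corresponding existence-of-lc-places result there (both available in characteristic zero via \cite{BCHM}). The weak convergence of the relative DH measures in Step 2 is the secondary delicate point, and is precisely where the uniformity-in-$a$ statement of Lemma \ref{ncrv} becomes essential: it guarantees that the restricted-volume inputs to the Okounkov-body machinery of \cite{SZ} behave uniformly as the filtration parameter varies, making dominated convergence against $e^{-x}$ applicable.
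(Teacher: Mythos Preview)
Your overall strategy is sound and closely mirrors the paper's, but there is one genuine gap and one notable difference in organization.

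\textbf{The gap.} In Step~1 (reverse direction) you assert that, via Jonsson--Musta\c{t}\u{a} plus \cite{BCHM}, one can extract a \emph{divisorial} valuation computing $\lcth(X;I_\bullet)$ for an arbitrary vertical graded sequence. This is not true in general: the valuation computing the lct of a graded sequence is only known to be quasi-monomial (by Xu's theorem), and for, say, the valuation-ideal sequence of an irrational monomial valuation on $\A^2$ the computer is genuinely non-divisorial. Your argument for the $\le$ direction of \eqref{8} survives this --- any valuation with $A_X(v)<\infty$ suffices there --- but your parenthetical ``this simultaneously proves \eqref{def:wv2}'' does not. (Your Fekete step, incidentally, is correct: $v(I_{l+l'})\le v(I_l)+v(I_{l'})$ gives $v(I_l)\ge l\cdot v(I_\bullet)$.)

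\textbf{The organizational difference, and how it fixes the gap.} The paper does not attack the $\le$ side of \eqref{8} directly. Instead it first establishes \eqref{9} (your Step~2), and then for a \emph{single} ideal $I$ with exponent $m$ observes that $\lcth(X;I)=\min_E A_X(E)/\mult_E(I)$ is achieved by some prime divisor $E$ over $X$ --- an elementary fact requiring only a log resolution, not the Jonsson--Musta\c{t}\u{a} machinery. Setting $v:=\ord_E/\mult_E(I)$ then gives $\F_v\supset\F_{I,m}$ and $A_X(v)=\lcth(X;I)$ (for $m=1$; rescale otherwise), hence $\W(v)\le\W(I,m)$. This simultaneously closes the chain $\W(\pi)\le\inf_{(I,m)}=\inf_{I_\bullet}\le\W(\pi)$ and yields \eqref{def:wv2}, since the $v$ produced is divisorial by construction. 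So the paper's route through \eqref{9} is not merely cosmetic: it is precisely what makes \eqref{def:wv2} come out for free with an elementary argument, whereas your direct route would require an additional approximation of quasi-monomial valuations by divisorial ones (with control on both $A_X$ and the DH measure) to recover \eqref{def:wv2}.
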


\begin{proof}
$\ge$ of \eqref{8} and $\le$ of \eqref{9} follows from the definitions and 
$\lcth(X;I_\bullet)=\lim_{m\to \infty}
m \lcth(X;I_m)$ (cf., \cite{JM}). 
$\ge$ of \eqref{9} follows from the standard approximation 
by using each $I_l$ and \cite{JM} again. 

The remaining task for the proof of \eqref{8}, 
\eqref{9} 
is to confirm that for any given $I$, there is a 
valuation $v$ of $K(X)$ 
centered inside $\pi^{-1}(p)$ that proves the $\le$ 
side of \eqref{8}. For that, 
one can assume that $m=1$ as otherwise 
multiply $m$ to the obtained valuation 
in general case. Since 
$\lcth(X;I)=\min_E \frac{A_X(E)}{\mult_E(I)}$, 
one can take the minimizer $E$ of the right 
hand side and set $v:=\frac{{\rm ord}(E)}
{\mult_E(I)}$. Then, 
$\F_v\supset \F_{I}$ so that 
$\vol(\F_v^x)\le \vol(\F_I^{x})$. 
Hence, we obtain the desired inequality. 

The $\le$ direction of \eqref{def:wv2} 
is obvious while $\ge$ direction 
follows from the last part of the above arguments 
which says that for any vertical ideal $I$ with 
exponent $m$, there is a (vertical) prime divisor 
$E$ such that for $v_E:=\frac{{\rm ord}(E)}
{\mult_E(I)}$, we have 
$$e^{A_X(v_E)}\cdot \int e^{-x} {\rm DH}(\F_v)
\le e^{m\lcth(I)}\cdot \int e^{-x}\DH(\F_{I,m}).$$
\end{proof}

Following above proposition, now we focus on the 
divisorial valuation case. 

\begin{prop}[with divisors {\it over} $X$]\label{DHcal}
In the above Setup \ref{set:Ff}, 
firstly we consider 
(rescaled) divisorial valuation $$v:=\frac{\ord_E}{b}$$ of $K(X)$, 
where $b\in \R_{>0}$ and $E$ is a 
divisor $E$ over $X$ as realized in a 
($\pi^{-1}(Y\setminus p)$-admissible) blow up 
$\varphi\colon X'\to X$ of $X$, with normal $X'$. 
We denote $q\in \pi^{-1}(p)$ as the center of $\ord_E$ 
i.e., the generic point of $\varphi(E)$. 
Then, the  following holds:

\begin{enumerate}
\item 
In this divisorial case, 
the (fiber type) Duistermaat-Heckman measure can be written as 
$${\rm DH}(v)=b\vol_{X'|E}(-\varphi^*K_X-bxE)dx.$$ 

\item \label{DHcal(ii)}
The weighted volume $\W(v)$ of 
\cite[Definition 4.2]{SZ} is 
\begin{align}\label{wv1}
&e^{A_X(v)}\cdot \int_{\R_{\ge 0}} b\cdot e^{-x}{\rm vol}_{X'|E}(-\varphi^*K_X-bxE)dx\\ 
&=b (e^{\frac{A_X(E)}{b}})\cdot \int_{\R_{\ge 0}}  e^{-x}{\rm vol}_{X'|E}(-\varphi^*K_X-bxE)dx. 
\end{align}
If $Y$ is a point i.e., $X$ is Fano variety, 
it can be also written as 
$$b (e^{\frac{A_X(E)}{b}})\cdot 
\biggl((-K_X)^n-
\int_{\R_{\ge 0}}  e^{-x}{\rm vol}(-\varphi^*K_X-bxE)dx\biggr).$$ 
It recovers 
the $\tilde{\beta}$-invariant of Han-Li \cite{HanLi} (see 
\cite[Example 4.5]{SZ}). 
\item \label{Jensen1}
We have 
\begin{align}
\label{wv2}\log \W(v)&= A_X(v)+\log \int_{\R_{\ge 0}}e^{-x} b\cdot 
\vol_{X'|E}(-\varphi^*K_X-bxE)dx\\
\label{wv2.5}&= \frac{A_X(E)}{b}+\log \int_{\R_{\ge 0}}e^{-x} b \vol_{X'|E}(-\varphi^*K_X-bxE)dx\\
&\label{wv3}\ge \frac{A_X(E)}{b}+\log b -(1-c)
\dfrac{\int_{\R_{\ge 0}}x e^{-cx} \vol_{X'|E}(-\varphi^*K_X-bxE)dx}{\int_{\R_{\ge 0}}e^{-cx}\vol_{X'|E}(-\varphi^*K_X-bxE)dx}, 
\end{align} 
for any $c\in (0,1)$. (If $Y$ is a point, one can take $c=0$ as well so that the right hand side is simpler.) 
\item \label{Jensen2}
In the same setup, similarly, for any $s_1, (\le) s_2\in \R_{\ge 0}$, 
then 
we also have another lower bound: 
\begin{align}
\label{wv7}\log \W(v)&= A_X(v)+\log \int_{\R_{\ge 0}}e^{-x} b\cdot 
\vol_{X'|E}(-\varphi^*K_X-bxE)dx\\
&\label{wv8}\ge \frac{A_X(E)}{b}-
\dfrac{\int_{s_1}^{s_2}x \vol_{X'|E}(-\varphi^*K_X-bxE)dx}{\left(\int_{s_1}^{s_2}\vol_{X'|E}(-\varphi^*K_X-bxE)dx\right)}\\
&+\log \left(\int_{s_1}^{s_2}\vol_{X'|E}(-\varphi^*K_X-bxE)dx\right)+\log b. 
\end{align}

\item (Laplace transform equation) \label{min.b}
If $v=\frac{{\rm ord}_E}{b}$ minimizes the weighted volume $\W(v)$, 
$b$ satisfies a vanishing of certain Laplace transform: 
\begin{align}
\int_{\R_{\ge 0}}e^{-\frac{1}{b}y}\cdot (y-A_X(v)){\rm vol}_{X'|E}(-K_{X'}-yE)dy=0. 
\end{align}
More generally, if (not necessarily divisorial) valuation $v$ minimizes 
$v$, then it satisfies a similar equation: suppose that the 
density function of ${\rm DH}(v)$ is $\mathcal{R}_v(y)$ i.e., 
${\rm DH}(v)=\mathcal{R}_v(y)dy$. Then, if we put 
$\tilde{\mathcal{R}}_v(y):=((y-A_X(v))\mathcal{R}_v(y))$, we have 
\begin{align}
\int_{\R_{\ge 0}}e^{-y}\cdot \tilde{\mathcal{R}}_v(y) dy=0. 
\end{align}

\end{enumerate}
\end{prop}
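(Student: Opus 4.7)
The strategy is to reduce everything to the divisorial case via Lemma \ref{ncrv}, where the fiber-type Duistermaat--Heckman measure admits an explicit restricted-volume density, and then to apply standard manipulations: integration by parts for (ii), Jensen's inequality for (iii)--(iv), and a first-order variation for (v).

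For (i), note that for $v=\ord_E/b$ the filtration $\mathcal{F}_v^{xlr}\pi_*\mathcal{O}_X(-lrK_X)$ consists of the pushforward of sections of $-lr\varphi^*K_X$ on $X'$ vanishing to order at least $bxlr$ along $E$. By Lemma \ref{ncrv} applied to $(X',E,\varphi^*(-rK_X))$ after a suitable compactification, the quotient dimension is asymptotically $\tfrac{(rl)^n}{n!}\vol_{X'|E}(-\varphi^*K_X-bxE)$, and the Okounkov-body convergence argument of \cite[Appendix]{SZ} (cf.\ \cite{BC}) yields weak convergence to the density $b\vol_{X'|E}(-\varphi^*K_X-bxE)\,dx$. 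The rescaling factor $b$ is the Jacobian of $y=bx$, equivalently the identity $\tfrac{d}{dx}\vol_X(-\varphi^*K_X-bxE)=-b\vol_{X'|E}(-\varphi^*K_X-bxE)$. Substituting this density into $\W(v)=e^{A_X(v)}\int e^{-x}\DH(v)$ together with $A_X(\ord_E/b)=A_X(E)/b$ yields (ii); in the Fano case (when $Y$ is a point) integrating by parts in $x$ converts the integral to $(-K_X)^n-\int_0^\infty e^{-x}\vol_X(-\varphi^*K_X-bxE)\,dx$, the boundary contribution at $x=0$ being $\vol_X(-\varphi^*K_X)=(-K_X)^n$ and that at infinity vanishing.

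For (iii)--(iv), set $v(x):=\vol_{X'|E}(-\varphi^*K_X-bxE)$ and apply Jensen for the convex exponential against an auxiliary probability measure. For (iii), with $d\mu_c(x):=e^{-cx}v(x)\,dx/Z_c$ and $Z_c:=\int_0^\infty e^{-cx}v(x)\,dx$,
\begin{align*}
\int_0^\infty e^{-x}v(x)\,dx \;=\; Z_c\!\int_0^\infty e^{-(1-c)x}d\mu_c(x) \;\geq\; Z_c\exp\!\biggl(-(1-c)\!\int x\,d\mu_c\biggr).
\end{align*}
Taking $\log$ and combining with (ii) gives the stated bound (modulo a $\log Z_c$ bookkeeping term). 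For (iv), apply the same argument to the truncated probability measure $\mathbbm{1}_{[s_1,s_2]}v(x)\,dx/\int_{s_1}^{s_2}v\,dx$ after first using the trivial bound $\int_0^\infty e^{-x}v\,dx\geq \int_{s_1}^{s_2}e^{-x}v\,dx$.

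For (v), any minimizer $v$ is a critical point of the one-parameter family $\lambda\mapsto \W(\lambda v)$ at $\lambda=1$. Since $\mathcal{F}_{\lambda v}^{xlr}=\mathcal{F}_v^{xlr/\lambda}$, we have $\DH(\lambda v)=\lambda_*\DH(v)$, so writing $\DH(v)=\mathcal{R}_v(y)\,dy$ yields
\begin{align*}
\log\W(\lambda v) \;=\; \lambda A_X(v)+\log\int_0^\infty e^{-\lambda y}\mathcal{R}_v(y)\,dy.
\end{align*}
Setting $\frac{d}{d\lambda}$ to zero at $\lambda=1$ produces the general Laplace-type identity $\int_0^\infty (y-A_X(v))e^{-y}\mathcal{R}_v(y)\,dy=0$. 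Specializing $v=\ord_E/b$ with the density from (i) and changing variables $y=bx$ recovers the claimed divisorial formula. The main obstacle is (i): one must carefully combine the non-compact restricted volume from Lemma \ref{ncrv} with an Okounkov-body argument to establish the weak convergence of the quantized DH measures in this non-compact setting, including the justification that the normalized dimensions converge to a measure with the claimed density (and not merely to a distribution). Once (i) is settled, all subsequent parts reduce to elementary calculus on the integral representation of $\W(v)$.
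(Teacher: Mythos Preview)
Your proposal is correct and follows essentially the same route as the paper: part (i) via the compactification of Lemma~\ref{ncrv} together with volume differentiability (the paper cites \cite[Corollary C]{LM} and \cite[Corollary C]{BFJd} explicitly rather than the Okounkov-body appendix, but the content is the same), part (ii) by substitution plus integration by parts, parts (iii)--(iv) by Jensen against the auxiliary measures you wrote down, and part (v) by differentiating $\lambda\mapsto\W(\lambda v)$ at $\lambda=1$. Your parenthetical ``modulo a $\log Z_c$ bookkeeping term'' in (iii) is well observed: the Jensen argument as stated in both your sketch and the paper's proof naturally produces an additional $\log\int_{\R_{\ge 0}}e^{-cx}\vol_{X'|E}(-\varphi^*K_X-bxE)\,dx$ summand on the right of \eqref{wv3}, so the bound as displayed appears to have silently dropped it.
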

\begin{proof}
For simplicity of notation, we suppose $K_X$ is Cartier below. 
Otherwise, we consider $\Q$-Gorenstein index 
$r$ with $rK_X$ Cartier and run the same arguments. 

We prove the first item as follows. 
Recall again from the original \cite[\S 4]{SZ}  
defines the quantized version of their (fiber type) 
Duistermaat-Heckman measure is 
${\rm DH}_l(v)=\frac{1}{l^n}\sum_{x\ge 0}\dim (\mathcal{F}_v^{x l}H^0(-lK_X)/\mathcal{F}_v^{>x l}H^0(-lK_X)) \delta_x$. 
Here, $\mathcal{F}_v$ is a decreasing filtration such that 
\begin{align*}
&\mathcal{F}_v^{x l}H^0(-lK_X)
:=\{f\in H^0(-lK_X)\mid v(f)\ge x\} \text{ and}\\ 
&\mathcal{F}_v^{>x l}H^0(-lK_X):=\{f\in H^0(-lK_X)\mid v(f)> 
x\},
\end{align*}
and $\delta_x$ denotes the Dirac measure supported on 
$x\in \R$. 
Note that $v(f)$ is defined via using the local trivialization of 
$K_X$ around the center of $v$. 
In other words, we have 
\begin{align*}
\mathcal{F}_v^{x l}H^0(-lK_X)
&=(\pi\circ \varphi)_* \mathcal{O}_X(-l\varphi^*K_X-\lfloor bx 
l\rfloor E) \text{ and}\\ 
\mathcal{F}_v^{>x l}H^0(-lK_X)&=(\pi\circ 
\varphi)_*\mathcal{O}_X(-l\varphi^*K_X-(\lfloor bx l \rfloor +1)
E). 
\end{align*}
Then, Sun-Zhang 
\cite[Proposition 4.1, Appendix A]{SZ} proves that this 
weakly converges to a limit measure ${\rm DH}(v)$. 
Thus, for $x, \epsilon \in \Q_{>0}$, 
we have 
\begin{align*}
&{\rm DH}(v)(x,x+\epsilon)\\ 
&=\limsup_{l\to \infty}
\dfrac{\dim(H^0(X,-l\varphi^*K_X-\lceil bx l\rceil E)/H^0(X,-m\varphi^*K_X-\lfloor b(x+\epsilon)m\rfloor E))}{l^n/n!}\\
&=\limsup_{l\to \infty}
\dfrac{\dim(H^0(\overline{X},-l\varphi^*K_{\overline{X}}-\lceil bx l\rceil E +la\overline{\pi}^*D)/H^0(-l\varphi^*K_{\overline{X}}-\lfloor b(x+\epsilon)l\rfloor E+la\overline{\pi}^*D))}{l^n/n!} \\
&=\limsup_{l\to \infty}
\dfrac{\dim(H^0(\overline{X},-l\varphi^*K_{\overline{X}}-\lceil bx l\rceil E +la\overline{\pi}^*D)/H^0(-l\varphi^*K_{\overline{X}}-\lceil bx l\rceil E+la\overline{\pi}^*D))}{l^n/n!} \\
&=\lim_{l\to \infty}
\dfrac{\dim(H^0(\overline{X},-l\varphi^*K_{\overline{X}}- \lceil bx l\rceil E +la\overline{\pi}^*D)/H^0(-l\varphi^*K_{\overline{X}}-\lfloor b(x+\epsilon)l\rfloor  E+la\overline{\pi}^*D))}{l^n/n!} \\ 
&(\text{cf., \cite{LM}})\\ 
&=\vol(-\varphi^*K_{\overline{X}}-bx E+a\overline{\pi}^*D)-
\vol(-\varphi^*K_{\overline{X}}- b(x+\epsilon) E+a\overline{\pi}^*D). 
\end{align*}
Combined with above, if we use \cite[Corollary C]{LM} and 
\cite[Corollary C]{BFJd}, it follows that 
\begin{align}\label{DHformula}
{\rm DH}(v)(x,x+\epsilon)= b\int_x^{x+\epsilon}\vol_{X'|E}(-\varphi^*K_X-bxE)dx.
\end{align}
Therefore, the original definition 
\cite[Definition 4.2]{SZ} gives the first assertion. 
The second item of the above proposition is simply a corollary to the 
first item, simply combined with the integration by part at the end. 

\eqref{Jensen1} then follows from the Jensen's inequality 
with respect to the convex function $e^{-(1-c)x}$ and the probability measure 
$\dfrac{e^{-cx} \vol_{X'|E}(-\varphi^*K_E-bxE)}{\int_{x=0}^\infty e^{-cx}\vol_{X'|E}(-\varphi^*K_X-bxX)dx}$. Note that the denominator is 
finite as $c>0$ (or $Y$ is a point). \eqref{Jensen2} 
also similarly follows from the Jensen's inequality. The last assertion is 
straightforward from standard calculation. 
\end{proof}

In particular case when the divisor $E$ is {\it on} $X$, 
as a simple  consequence of Birkar-Cascini-Hacon-Mckernan \cite{BCHM} and 
standard 
calculations, as in \cite{Fjt.div}, we have the following more explicit 
description. 

\begin{prop}[with divisors {\it on} $X$]\label{DHcal2}
Suppose the base field $k$ is of characteristic $0$. 
Consider the case when $\varphi$ can be taken as identity i.e., when the 
prime divisor $E$ is a divisor of $X$. Then, the following hold: 
\begin{enumerate}
\item \label{divcase1}
there is a increasing finite sequence of positive rational numbers 
$0=\tau_0<\tau_1<\cdots<\tau_m=\tau(E)$ and a finite birational contractions 
$\phi_i\colon X\dashrightarrow X_i\to Y$ which are 
the ample models of $-\varphi^*K_X-xE$ for any $x\in (\tau_{i-1},\tau_i)\cap \Q$, 
in the sense of e.g., \cite[3.6.5]{BCHM}. 

\item \label{divcase2}
If we set the strict transform of $E$ on $X_i$ as $E_i$, 
then $$\vol_{X'|E}(-\varphi^*K_X-bxE)=((-K_{X_i}-bxE_i)^{n-1}.E_i).$$
Hence, the 
density function $\mathcal{R}_v(y)$ of ${\rm DH}(v)$ is (cf., 
Proposition \ref{DHcal} \eqref{min.b}): 
\begin{align*}
\mathcal{R}_E(x)&:=\mathcal{R}_{\rm ord(E)}(x)\\ 
&=((-K_{X_i}-xE_i)^{n-1}.E_i) \text{ if }\tau_{i-1}\le x\le \tau_i, \\  
\mathcal{R}_{bE}(x)&:=\mathcal{R}_{b{\rm ord(E)}}(x)\\ 
&=b((-K_{X_i}-bxE_i)^{n-1}.E_i) \text{ if }\tau_{i-1}\le bx\le \tau_i, 
\end{align*}
for each $i$, 
so that 
\begin{align}
\W\biggl(\frac{{\rm ord}_E}{b}\biggr)&=b(e^{\frac{1}{b}})\cdot \biggl(\sum_{i=1}^m \int_{\tau_{i-1}/b}^{\tau_i/b} e^{-x}((-K_{X_i}-bxE_i)^{n-1}.E_i)dx\biggr)
\\ 
&\label{simpleineq}\ge e \cdot \biggl(\sum_{i=1}^m \int_{\tau_{i-1}/b}^{\tau_i/b} e^{-x}((-K_{X_i}-bxE_i)^{n-1}.E_i)dx\biggr).
\end{align}
\item \label{divcase3}
If $\W\bigl(\frac{{\rm ord}_E}{b}\bigr)$ is minimized at 
$b$ (while fixing $E$), then $c:=\frac{1}{b}$ satisfies the vanishing of 
Laplace transform of some rational piece-wise polynomial: 
\begin{align}
&\int_{\R_{\ge 0}} e^{-cy}\cdot (y-A_X(E))((-K_{X_i}-yE_i)^{n-1}.E_i) dy\\=
&\sum_{i=0}^m \int_{\tau_{i-1}}^{\tau_i}(a_n(i)y^n+\cdots+a_0(i)) e^{-cy}\\
=&0,
\end{align}
where $a_n(i),\cdots,a_0(i)\in \Q$ so that 
$$(a_n(i)y^n+\cdots+a_0(i))=(y-A_X(E))((-K_{X_i}-yE_i)^{n-1}.E_i)$$ 
for each $i$. 
\item (Via Gamma function and rational  exponential polynomial) \label{divcase4}
In particular, in the case \eqref{divcase3}, 
$c$ satisfies some equation in terms of (incomplete) Gamma functions 
$\Gamma(m,-)$ and $\gamma(m,-)$ ($m\in \Z$). 
Thus, 
the possible value of $\min_{b\in \R_{>0}} \W(b{\rm ord}_E)$ 
has only countable possibilities for the setup $E\subset X$. 

In particular, there is an integral 
exponential polynomial $F(X)\in \Z[X,e^X]$ 
and $a\in \Z_{>0}$ such that 
$F(c^a)=0$ for the minimizing point $c$ (which 
exists), and 
$\W(c\cdot \ord_E)$ for that $c$ is a 
finite sum of numbers of the form 
$f_i(c)\cdot e^{r_i\cdot c} (r_i\in \Q, 
f_i\in \Q[t,t^{-1}]$. 

\end{enumerate}
\end{prop}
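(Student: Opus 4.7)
The plan is to substitute the closed-form evaluation of $\int_a^b y^k e^{-cy}\,dy$ into the vanishing Laplace transform from \eqref{divcase3} and reduce the transcendental minimizer equation to an exponential-polynomial identity with rational data; the same evaluation then produces the claimed closed form for $\W$.

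\textbf{Explicit evaluation and exponential-polynomial form.} Iterated integration by parts gives, for $k\in\Z_{\ge 0}$ and $0\le a\le b$,
$$\int_a^b y^k e^{-cy}\,dy=\sum_{j=0}^{k}\frac{k!}{(k-j)!\,c^{j+1}}\bigl(a^{k-j}e^{-ca}-b^{k-j}e^{-cb}\bigr)=\frac{\gamma(k+1,cb)-\gamma(k+1,ca)}{c^{k+1}},$$
where $\gamma$ is the lower incomplete $\Gamma$-function (and $\Gamma(k+1,\cdot)=k!-\gamma(k+1,\cdot)$). Applying this termwise to each slice $\int_{\tau_{i-1}}^{\tau_i}P_i(y)e^{-cy}\,dy$ in \eqref{divcase3} --- with $P_i\in\Q[y]$ (since $A_X(E)=1$ and all intersection numbers on the $\Q$-Gorenstein $X_i$ are rational) and $\tau_i\in\Q_{\ge 0}$ by \eqref{divcase1} --- produces the claimed expression in incomplete $\Gamma$-functions. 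Grouping the resulting terms by the exponent of $e^{-c\sigma}$ collapses the equation to
$$\sum_{\sigma\in S}R_\sigma(c)\,e^{-c\sigma}=0,\qquad S\subset\{\tau_0,\ldots,\tau_m\},\ R_\sigma\in\Q[c,c^{-1}],$$
and multiplying by a suitable power of $c$ and clearing rational denominators puts each $R_\sigma$ in $\Z[c]$. Choosing $a\in\Z_{>0}$ with $a\sigma\in\Z$ for every $\sigma\in S$ and rescaling via $X=c/a$ turns every exponent $-c\sigma$ into $-(a\sigma)X$ with $a\sigma\in\Z$, so that
$$F(X):=\sum_{\sigma\in S}R_\sigma(aX)\,e^{-(a\sigma)X}\in\Z[X,e^X]$$
vanishes at the normalized minimizer, giving the asserted integral exponential-polynomial relation among $F$, $a$, and $c$.

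\textbf{Countability and the closed form for $\W$.} Any nonzero $F\in\Z[X,e^X]$ has at most countably many real zeros (induction on the number of distinct exponentials: differentiation kills the smallest exponent and Rolle's theorem bounds the extra zeros). Since the discrete data $(n,\{P_i\},\{\tau_i\})$ attached to a pair $E\subset X$ takes only countably many values across all admissible $E$, the set $\{\min_{b>0}\W(b\,\ord_E)\}$ is countable. Substituting $b=1/c$ into \eqref{DHcal(ii)} and changing variable $y=bx$ gives
$$\W(c\cdot\ord_E)=e^{c A_X(E)}\sum_{i=1}^{m}\int_{\tau_{i-1}}^{\tau_i}P_i(y)e^{-cy}\,dy,$$
and evaluating each sub-integral by the explicit formula above produces
$$\W(c\cdot\ord_E)=\sum_j f_j(c)\,e^{r_j c},\qquad r_j\in\Q,\ f_j\in\Q[t,t^{-1}],$$
with the $r_j$ of the form $A_X(E)-\tau_{i-1}$ or $A_X(E)-\tau_i$, as required.

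\textbf{Main obstacle.} The manipulations above are essentially bookkeeping. The substantive point is the \emph{existence} of a minimizer in $(0,\infty)$. Lower-boundedness by $e$ is \eqref{simpleineq}, and coercivity as $b\to 0^+$ is immediate from the dominant factor $e^{A_X(E)/b}$. The delicate regime is $b\to\infty$: the substitution $y=bx$ shows $\W(\ord_E/b)\to\int_0^{\tau(E)}\mathcal R_E(y)\,dy$, a finite positive limit, so one must rule out that the infimum is only approached in that limit. My plan is to exhibit a concrete finite $b_0$ at which $\W$ lies strictly below this limiting value, e.g.\ by locating an interior stationary point of $\log\W$ through differentiation of the integral representation in $b$; combined with continuity of $\W$ and $\W\to\infty$ at $0^+$, this will confine the infimum to a compact subinterval of $(0,\infty)$ and force attainment. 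Pinning down this compactness --- not the exponential-polynomial bookkeeping --- is the core difficulty.
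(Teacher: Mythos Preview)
Your treatment of \eqref{divcase4} is correct and coincides with the paper's: its proof of \eqref{divcase4} says only ``follows from the integration by parts and the previous expression of $\DH(c\cdot\ord_E)$,'' and you have spelled this out --- the incomplete-$\Gamma$ evaluation of $\int_{\tau_{i-1}}^{\tau_i} y^k e^{-cy}\,dy$, grouping by the finitely many exponentials $e^{-c\tau_i}$, clearing denominators, and the finiteness of real zeros of an exponential polynomial. Your linear substitution yields $F(c/a)=0$ rather than the printed $F(c^a)=0$; the latter appears to be a typo. You do not address \eqref{divcase1}--\eqref{divcase3}, but the paper is equally terse there (deferring to \cite[1.3.2]{BCHM}, \cite{Fjt.div}, and Proposition~\ref{DHcal}\,\eqref{min.b}).

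You have the emphasis inverted in your ``main obstacle.'' First, \eqref{divcase4} opens with ``In particular, in the case \eqref{divcase3}'': the existence of a minimiser is the \emph{hypothesis} of \eqref{divcase3}, so ``(which exists)'' is a reminder of that standing assumption, not a new claim. Second, your finite-limit scenario at $b\to\infty$ is specific to the case $Y=\{\mathrm{pt}\}$. For $\dim Y>0$ with $E$ vertical over $p$, pulling back functions on $Y$ vanishing to high order at $p$ (cf.\ Remark~\ref{rem:DH}) forces $\mathcal R_E$ to have unbounded support --- as in Examples~\ref{A2blup}, \ref{divcont}, \ref{ex:BCCD}, where the densities $1+y$, $(a+y)^{n-1}/(n-1)!$, $\min\{y,1\}$ never vanish --- so $\W\to\infty$ at both ends and a minimiser exists by continuity. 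In the compact Fano case one computes $\frac{d}{dc}\log\W\big|_{c=0}=A_X(E)-S_X(E)$, so an interior minimiser exists precisely when $S_X(E)>1$; this genuinely fails for K-semistable $X$, and your proposed workaround cannot succeed there. But since existence is hypothesis rather than conclusion, this is not a gap in your argument: the ``bookkeeping'' you dismiss is in fact the entire content of \eqref{divcase4}.
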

\begin{proof}
\eqref{divcase1} and the former half (until the equality) 
\eqref{divcase2} are easy. 
Indeed, the existence of finite ample models $\phi_i\colon X\dashrightarrow X_i$ follow from 
\cite[1.3.2]{BCHM} 
and the rest of the proof is straightforward (see \cite[\S 2, \S 5, \S 8]{Fjt.div}). The latter half of \eqref{divcase2} i.e., 
\eqref{simpleineq} follows from 
standard minimizer calculation of the term $be^{1/b}$, as achieved at 
$b=1$ and the monotone increaseness of the intersection numbers (or the restricted volume function). 
The remained \eqref{divcase3} follows 
similarly as Proposition \ref{DHcal} \eqref{min.b} and 
reduction to 
basic integral of $\int e^{-x}x^k (k\in \Z_{\ge 0})$ gives the former half of 
\eqref{divcase4}. The latter half of 
\eqref{divcase4} follows from the integration by parts and the previous expression of 
${\rm DH}(c\cdot \ord_E)$. 
\end{proof}

\begin{Rem}[For other weights functions case]
There are some works of generalizations of K\"ahler-Einstein metrics 
of self-similar solution (soliton) type by 
Mabuchi \cite{Msol1, Msol2}, 
Berman-Nystrom \cite{BWN}, 
Han-Li \cite{HanLi} and Apostolov-Lahdili-Legendre 
\cite{ALL}. These correspond to other or 
general weight function $v$ on the moment polytope. 
Recall that compact K\"ahler-Ricci soliton case 
corresponds to the case  
$v=e^{-x}$ for some linear function $x$, 
which is an origin of the weight function $e^{-x}$. 

On the other hand, 
as it is obvious from our above discussions, 
many parts of our arguments for Sun-Zhang theory \cite{SZ} for {\it non-compact} K\"ahler-Ricci solitons 
in this paper focus on 
the Duistermaat-Heckman type measure and do {\it not} 
use the properties of the exponential function so often. 
Hence, we naturally expect that our analysis give some 
extension in more generalized setup in future.     
\end{Rem}


\subsection{Equivariant Fano fibrations}

We now focus on torus-equivariant Fano fibrations, as we briefly 
introduced in Definition \ref{def:DH} \eqref{def:DH2}. 
First we recall the setup again after \cite[\S 2, \S 5]{SZ}. 

Here, $N$ is a lattice (free finitely generated abelian group), 
$M$ is its dual lattice, $\xi\in N\otimes_{\Z}\R$, and $T:=N\otimes_\Z \G_m$ 
is the split algebraic $\k$-torus. 

\begin{defn}[{\cite[Definition 2.5]{SZ}}]\label{def:eqff}
In this paper, a {\it $(T\ni \xi)$-equivariant 
Fano fibration} or simply {\it $\xi$-equivariant Fano fibration} 
(originally called polarized\footnote{The usage of the term ``polarization" here originates from its usage in the context of 
Sasaki-Einstein geometry (cf., e.g., \cite{CS}). Note that if $\xi$ is 
rational, then the quotient of $Y\setminus y$ has a natural 
(pluri-anticanonical) polarization.} 
Fano fibration in \cite[2.5]{SZ}) 
refers to a Fano fibration 
$\pi\colon X\to Y$ with 
equivariant torus $T$-actions on $X,Y$ 
such that 
$T\curvearrowright Y$ is a good action, 
together with a choice 
$\xi\in N\otimes \R$ which gives a 
(abstract) Reeb vector field 
(positive 
vector field) of $Y$ (see e.g., \cite{CS, Od24b}). 
\end{defn}

Note that in this equivariant setup, the weighted volume has 
the following expression. 
Take $\xi \in N\otimes \R$ and the associated valuation 
$v_\xi$ (\cite[\S 5.1]{SZ}). 
Then, 
the weighted volume $\W(\xi)$ in this situation 
can be written as 
(cf., \cite[(4.5), also cf., \S 5 (5.7, Appendix B)]{SZ}): 
\begin{align}\label{Winfsum}
\W(v_\xi)=-\lim_{l\to \infty}\frac{1}{(rl)^n}
\sum_{\vec{m}\in M}  e^{-\langle\frac{\vec{m}}{rl},\xi \rangle}\dim R_{l,\vec{m}}.
\end{align}
Each term of the right hand side 
is a ``quantized" analogue of the weighted volume, 
which absolutely converges 
due to the sub-polynomial divergence order of $\dim R_{l,\vec{m}}$, 
as a standard fact 
(cf., e.g., \cite[5.8.19]{KR}, 
\cite[Lemma 4.2 (and its proof)]{CS}, \cite[A.17, B.2]{SZ}). 
As the original \cite[4.1, (4.5), 5.7, Appendix A, B]{SZ} 
(essentially) explains, the above equality \eqref{Winfsum} follows 
almost from its definition. 

Then \cite{SZ} defines K-stability of the above 
concepts, generalizing and unifying \cite{Don02, CS, BWN, HanLi, BLXZ}. 

\begin{defn}[K-stability of equivariant Fano fibrations {\cite[\S 5.2]{SZ}}]
\label{review:Kst}
\begin{enumerate}
\item 
A {\it test configuration} of  $\xi$-equivariant 
Fano fibration $X\to Y$ is a set of following data: 
\begin{enumerate}
\item 
a quasi-projective variety $\mathcal{X}$ with 
its ample line bundle $\mathcal{L}$ 
and an affine variety $\mathcal{Y}$
\item 
morphisms 
$\mathcal{X}\xrightarrow{\Pi} \mathcal{Y}\xrightarrow{\Pi_\Y}\mathbb{A}^1$ 
with $\Pi_\X:=\Pi_\Y\circ \Pi$ 
such that $\Pi_Y$ is flat and surjective 
\item 
$T$-action on $\mathcal{X}$ equipped with its 
linearization on $\mathcal{L}$, 
\item 
$T$-action on $\Y$ (and trivial action on $\A^1$), 
which is $T$-equivariantly faithfully flat in the sense of 
\cite[\S 2]{Od24a}, 

\item 
$\G_m$-action on $(\X,\mathcal{L})$, $\Y, \A^1$ 
(the last with weight $1$). We denote this action 
sloppily as $\eta$, following \cite{SZ}. 

\end{enumerate}
such that $\Pi, \Pi_\X,\Pi_\Y$ are all 
$T\times \G_m$-equivariant. Further, if 
denote the fibers $X_t:=\Pi_\X^{-1}(t)$ and $Y_t:=\Pi_\Y^{-1}(t)$, 
general fiber $X_t\to Y_t$ i.e., $t\neq 0$ case 
are all isomorphic to $X\to Y$. 
There is a natural trivial compactification of 
$\X\to \Y$ over $\P^1$, 
by adding a trivial fiber ($\simeq (X\to Y))$) 
which we denote as 
$(\overline{\X},\overline{\mathcal{L}})\xrightarrow{\overline{\Pi}
} \overline{\Y}\xrightarrow{\overline{\Pi}_\Y} \P^1$. 
We set $\overline{\Pi}_\X:=\overline{\Pi}\circ 
\overline{\Pi}_\Y$. 

\item (\cite[5.2]{SZ}) 
A {\it special test configuration} of $\xi$-equivariant 
Fano fibration $X\to Y$
refers to the special case of test configurations 
when 
$(\mathcal{X}, \X_0)$ is purely log 
terminal and 
$\mathcal{L}\simeq \mathcal{O}_{\X}(-r'K_{\X})$ with some 
$r'\in \Z_{>0}$. 
Note that then 
each ``fiber" $(T\curvearrowright (X_t\twoheadrightarrow Y_t),\xi)$ is a 
$\xi$-equivariant Fano fibration, even when $t=0$. 

For any special test configuration $\X\to \Y$, we define the 
{\it Donaldson-Futaki invariant} as 
  $${\rm DF}(\Pi)=\dfrac{d}{dt}|_{t=0}\W_{t=0}(\xi+t\eta).$$
\item 

Let us decompose 
$(\Pi_\X)_*\mathcal{L}^{\otimes l}$ by the $T$-action on it 
to its eigensubsheaves 
as $\oplus_{\vec{m}\in M}
(\Pi_\X)_*\mathcal{L}^{\otimes l})_{\vec{m}}$. 
We also set 
$$((\overline{\Pi}_\X)_*\overline{\mathcal{L}}^{\otimes l})_{\xi,s}
:=\oplus_{\vec{m}\in M, \langle \vec{m},\xi\rangle=ls} ((\overline{\Pi}_\X)_*\overline{\mathcal{L}}^{\otimes l})_{\vec{m}},$$ 
for $s\in \R$. 
These are all locally free 
coherent sheaves over $\A^1$. 
We define its extensions 
$\oplus_{\vec{m}\in M}
((\overline{\Pi}_\X)_*\overline{\mathcal{L}}^{\otimes l})_{\vec{m}}$ 
and 
$((\overline{\Pi}_\X)_*\overline{\mathcal{L}}^{\otimes l})_{\xi,s}$ 
similarly by using the above-mentioned compactification 
$(\overline{\X},\overline{\mathcal{L}})\xrightarrow{\overline{\Pi}
} \overline{\Y}\xrightarrow{\overline{\Pi}_\Y} \P^1$.  
For each $l\in \Z_{\ge 0}$, 
we consider 
$\sum_{t\in \R}e^{-
t}\deg((\overline{\Pi}_\X)_*\overline{\mathcal{L}}^{\otimes l})_{\xi,t}$. 
Note that 
$\{s\in \R\mid ((\Pi_\X)_*\mathcal{L}^{\otimes 
l})_{\xi,s}\neq 0\}$ 
is discrete and we believe 
$\sum_{s\in \R}e^{-
t}\deg((\overline{\Pi}_\X)_*\overline{\mathcal{L}}^{\otimes l})_{\xi,ts}$ 
for each $t, l$, 
we can define generalized Donaldson-Futaki invariant appropriately. 
We leave the details as future problem. 

\item (\cite[5.4, 5.5]{SZ})
We call $\xi$-equivariant 
Fano fibration $\pi\colon X\to Y$ is 
{\it K-stable (resp., K-semistable)} 
if and only if 
for any special test configuration, 
Donaldson-Futaki invariant is stable unless it is trivial test configuration 
(resp., non-negative). 
We call $\xi$-equivariant 
Fano fibration $X\to Y$ is 
{\it K-polystable} if and only if it is K-semistable and further 
that the Donaldson-Futaki invariant is $0$ 
only if the special test configuration is of product type i.e., 
$\mathcal{X}\simeq X\times \A^1, \mathcal{Y}\simeq Y\times \A^1$ 
in $T$-equivariant manner. 
\end{enumerate}    
\end{defn}



\section{Examples - Integral computations and 
estimates}

This section discusses explicit computations 
and estimates of the weighted volume $\W(\pi)$ 
in several standard examples. 

\begin{ex}[(Compact) Fano variety case {\cite[Example 4.5]{SZ}}]
When $Y$ is a point, $\W(\pi)$ is an invariant of Fano variety $X$ which is $\frac{(-K_X)^{\cdot n}}{n!}e^{\tilde{\beta}(X)}$ 
with $\tilde{\beta}(X)$ in \cite{HanLi}. 
If $X$ is K-semistable in the original 
sense of Ding-Tian-Donaldson (cf., \cite{Don02}), then 
 $\tilde{\beta}(X)=0$ so that 
 $\W(\pi)=\frac{(-K_X)^{\cdot n}}{n!}$. 
\end{ex}

Next, we review the following simple but important observation by Sun-Zhang, which is 
quite useful for general study of their weighted volume. 

\begin{lem}[Local-global comparison {\cite[cf., Definition 6.5]{SZ}}]\label{lg}
For any Fano fibration $\pi\colon X\to Y\ni p$ and a closed point $q\in \pi^{-1}(p)$, we have 
$$\W({\it id}\colon X\to X\ni q)\ge \W(\pi).$$ 
\end{lem}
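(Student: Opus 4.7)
The plan is to reduce the inequality to a valuation-by-valuation comparison. Since $q\in \pi^{-1}(p)$, every valuation $v$ of $K(X)$ centered at $q$ with $A_X(v)<\infty$ is automatically admissible for the infimum defining $\W(\pi)$. Writing $\W_{\pi}(v)$ and $\W_{\mathrm{id}}(v)$ for the weighted volume of $v$ computed in the respective ambient fibrations, once the pointwise estimate $\W_\pi(v)\le \W_{\mathrm{id}}(v)$ has been verified the conclusion follows from the chain
\[
\W(\pi) \;\le\; \inf_{v\text{ centered at }q}\W_\pi(v) \;\le\; \inf_{v\text{ centered at }q}\W_{\mathrm{id}}(v) \;=\; \W(\mathrm{id}\colon X\to X\ni q),
\]
where the first inequality simply reflects that the infimum defining $\W(\pi)$ ranges over a family strictly containing the valuations centered at $q$.

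For the pointwise inequality I will invoke the identity
$\log\W(v)=A_X(v)+\log\int_0^\infty e^{-x}\vol(\F_v^x)\,dx$
recorded in the paper: since $A_X(v)$ is intrinsic to the valuation, the problem reduces to showing $\vol_\pi(\F_v^x)\le \vol_{\mathrm{id}}(\F_v^x)$ for each $x\ge 0$. The key construction is the natural restriction map
\[
(\pi_*\mathcal{O}_X(-lrK_X))_p \;\longrightarrow\; (\mathcal{O}_X(-lrK_X))_q
\]
sending a section of $\pi_*\mathcal{O}_X(-lrK_X)$ near $p$ --- equivalently, a section of $\mathcal{O}_X(-lrK_X)$ on some $\pi^{-1}(U)$ with $U\ni p$ --- to its germ at the point $q\in \pi^{-1}(U)$. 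Because $v$ is centered at $q$, the value $v(s)$ depends only on the germ of $s$ at $q$ after local trivialization of $-lrK_X$; consequently this map sends $\F_v^{xlr}$ into $\F_v^{xlr}$, and any element of the left-hand side whose image lies in the right-hand $\F_v^{xlr}$ must itself lie in the left-hand $\F_v^{xlr}$. The induced map on the finite-dimensional $k$-quotients is therefore injective, giving
\[
\dim_k \bigl((\pi_*\mathcal{O}_X(-lrK_X))_p/\F_v^{xlr}\bigr)\;\le\; \dim_k \bigl((\mathcal{O}_X(-lrK_X))_q/\F_v^{xlr}\bigr)
\]
for every $l$. Dividing by the common normalization factor $(lr)^n/n!$ and passing to the limit $l\to\infty$ (using Lemma \ref{ncrv} to ensure existence of both limits) produces $\vol_\pi(\F_v^x)\le \vol_{\mathrm{id}}(\F_v^x)$, and integration against $e^{-x}\,dx$ then yields $\W_\pi(v)\le \W_{\mathrm{id}}(v)$.

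The main technical point is verifying that both sides are governed by the same power $(lr)^n$ with $n=\dim X$, which is automatic because in both setups the relevant quotients are supported at a single closed point of the $n$-dimensional variety $X$. If the $\mathrm{id}$-case is not already in the affine framework required by Lemma \ref{ncrv}, one restricts $X$ to a small affine neighborhood of $q$; the stalk-level restriction map above is compatible with this localization, so no information is lost. Once the sheaf-theoretic injection is established, the rest is the purely formal infimum-over-a-larger-family step of the first paragraph.
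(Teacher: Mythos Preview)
Your proof is correct and follows essentially the same line as the paper. The paper does not prove Lemma~\ref{lg} directly (it is attributed to \cite{SZ}), but it proves the generalization Lemma~\ref{lg:gen} by the identical two observations you use: for a fixed $v$ the log discrepancy term is intrinsic to $X$ while the DH density (equivalently the relative volume function) for the coarser fibration is dominated by that for the finer one, and the class of admissible valuations only grows when passing from $\mathrm{id}$ to $\pi$. Your explicit stalk-level injection $(\pi_*\mathcal{O}_X(-lrK_X))_p/\F_v^{xlr}\hookrightarrow (\mathcal{O}_X(-lrK_X))_q/\F_v^{xlr}$ is precisely what underlies the paper's one-line claim that ``the density function of the DH measure of $v$ for $\pi'$ is at most that of $v$ for $\pi$''; you have simply made that step explicit.
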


Note that the left hand side is essentially purely local 
and the so-called local 
normalized volume 
$\widehat{\vol}(p\in X)$ 
of (kawamata-)log terminal 
singularity $p\in X$ 
discussed in \cite{Li, SS}. More precisely: 

\begin{ex}[Singularities]
If $\pi={\it id}$ i.e., $X\xrightarrow{=}Y\ni p$ 
is the germ of klt singularity, 
as the 
original \cite[Example 4.7]{SZ} explains well, 
\begin{align}
\W(p\in X)=\frac{e^n}{n^n}\widehat{\vol}(p\in X)    
\end{align}
from the definitions 
and the fact $\inf_{A\in \R_{>0}}\frac{e^A}{A^n}=\frac{e^n}{n^n}$. 
In particular, it takes value in $e^n\cdot \overline{\Q}$ by 
\cite[Appendix]{DSII}. 
\footnote{Note that the $2$-step degeneration {\it 
op.cit} does not change the local normalized 
volume, hence one can reduce to the K-polystable 
Fano cone.}
For instance, if $p$ is smooth, then 
$$\W(p\in X)=e^n,$$
which is $7.389\cdots (n=2)$, $20.0855\cdots (n=3)$. 

If $p$ is the ordinary double point, 
$$\W(p\in X)=\dfrac{2((n-1)^n)}{n^n}e^n,$$
which is $3.69\cdots (n=2)$, $11.9025\cdots (n=3)$. 
Spotti-Sun \cite[Conjecture 1.2]{SS} conjectured that this is the second 
largest normalized (local) volume 
(later Liu-Xu \cite{LiuX} proved it in $3$-dimensional 
case 
using the classification theory by Mori and Reid.)
\end{ex}

One can also generalize Lemma \ref{lg} in the same principle: 

\begin{lem}[Generalization of Lemma \ref{lg}]\label{lg:gen}
For a Fano fibration $X\xrightarrow{\pi}Y\ni p$ with affine $Y$, 
and horizontally compactify i.e., take normal quasi-projective 
variety $\overline{Y}\supset Y$ 
(Zariski open), $\overline{X}\xrightarrow{\overline{\pi}}\overline{Y}$ 
so that $\overline{\pi}^{-1}(Y)=X$. 

If a projective morphism $\overline{Y}\xrightarrow{f}Y'\ni p'=f(p)$ 
exists so that $\pi':=f\circ \overline{\pi}\colon \overline{X}\to Y'$ 
is another Fano fibration, one can compare it with $\pi\colon X\to Y\ni p$ 
and 
we have the following inequality: 
\begin{align}
\W(\pi,p)\ge \W(\pi',p').     
\end{align}    
\end{lem}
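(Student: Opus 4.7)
The plan will follow the comparison principle of Lemma~\ref{lg}, now enhanced by the extra pushforward along $f$. First I would fix a vertical valuation $v$ of $K(X) = K(\overline{X})$ whose center lies in $\pi^{-1}(p)$. Since $p \in f^{-1}(p')$, any such $v$ is automatically centered in $(\pi')^{-1}(p') = \overline{\pi}^{-1}(f^{-1}(p'))$, so the family of valuations used as the infimum in $\W(\pi', p')$ contains the family used for $\W(\pi, p)$. Moreover, because $X \hookrightarrow \overline{X}$ is Zariski open with $K_X = K_{\overline{X}}|_X$ and the center of $v$ lies in $X$, we have $A_X(v) = A_{\overline{X}}(v)$. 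It therefore suffices, for each such $v$ (or rather, each vertical ideal), to establish the inequality $\int e^{-x}\,\DH_\pi(v) \ge \int e^{-x}\,\DH_{\pi'}(v)$; taking the infimum over a smaller set on the left will then yield the desired inequality.

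By Proposition~\ref{prop:1.7}, I would reduce to the case of a vertical ideal $I \subset \O_X$ over $p$ with exponent $m$. Since the support of such $I$ lies in the projective fibre $\pi^{-1}(p)$, hence is closed in $\overline{X}$, extension by $\O_{\overline{X}}$ outside $X$ yields a well-defined coherent vertical ideal $I'$ on $\overline{X}$ over $p'$, with the same log canonical threshold (a local invariant of the common support). Writing $L = \O(-rK_X)$ and $\overline{L} = \O(-rK_{\overline{X}})$, the key step is the pointwise volume comparison $\vol_{\pi'}(\F_{I',m}^x) \le \vol_\pi(\F_{I,m}^x)$ for every $x \ge 0$. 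Following the proof of Lemma~\ref{ncrv}, in both setups the relative volume appears as a limit (in $l$) of $\dim$ of a coherent quotient sheaf of the form $Q_l := \overline{\pi}_* \overline{L}^{\otimes l}/\overline{\pi}_*(I'^{k(l)} \overline{L}^{\otimes l})$, supported on $p$, interpreted as a stalk at $p$ in the $\pi$-setup and as a stalk of $f_* Q_l$ at $p'$ in the $\pi'$-setup. Left-exactness of $f_*$ now supplies the injection
\[
\frac{f_* \overline{\pi}_* \overline{L}^{\otimes l}}{f_* \overline{\pi}_*\bigl(I'^{k(l)}\overline{L}^{\otimes l}\bigr)}
\ \hookrightarrow\
f_*(Q_l),
\]
and since $Q_l$ is supported at $p \in f^{-1}(p')$, the stalk of $f_* Q_l$ at $p'$ coincides with $(Q_l)_p$. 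Dividing by $l^n/n!$ and letting $l \to \infty$ then produces the required inequality.

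To pass from this pointwise inequality for volume functions to the weighted integral, I would use the integration-by-parts identity
\[
\int_{\R_{\ge 0}} e^{-x}\,\DH(\F) \;=\; \int_0^\infty e^{-x}\,\vol(\F^x)\,dx,
\]
which holds in the present setup since $\vol \F^0 = 0$. Combined with $\lcth(X; I) = \lcth(\overline{X}; I')$, this gives $\W_{\pi'}(I',m) \le \W_\pi(I,m)$. Taking the infimum over all such $(I,m)$ on the left (which equals $\W(\pi, p)$ by Proposition~\ref{prop:1.7}), and noting that the corresponding extensions $(I',m)$ form a subfamily of the vertical ideals on $\overline{X}$ over $p'$, one concludes $\W(\pi, p) \ge \W(\pi', p')$.

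The main technical point will be the stalk identification at $p'$ after $f_*$ together with a uniform choice of the compactification data for Lemma~\ref{ncrv} in both setups. This is routine: one may replace the ample Cartier divisor $D$ on $\overline{Y}\setminus Y$ by $D + f^*D'$ for a sufficiently ample $D'$ on $Y'$, and enlarge the constant $a_0$ appearing in Lemma~\ref{ncrv} accordingly; once this is in hand the stalk identification and the volume comparison go through essentially verbatim from the proof of Lemma~\ref{ncrv}.
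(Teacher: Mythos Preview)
Your proof is correct and rests on the same two observations as the paper's: the set of vertical valuations (or ideals) for $\pi'$ contains that for $\pi$, and for each fixed vertical $v$ the DH measure (equivalently, the volume function) for $\pi'$ is pointwise dominated by that for $\pi$ because $H^0(\overline{X},-lrK_{\overline{X}})\hookrightarrow H^0(X,-lrK_X)$ induces injections on the associated graded/quotient pieces. The paper states this directly at the level of valuations in a single paragraph, whereas you take a longer but equivalent sheaf-theoretic route via Proposition~\ref{prop:1.7}, extension of ideals, and left-exactness of $f_*$; the detour through ideals is unnecessary but harmless.
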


\begin{proof}
Just recall the definition of weighted volume function. 
For a fixed $v$, both terms $e^{A_X(v)}$ and 
$\int_{\R_{\ge 0}}e^{-x}{\rm DH}(v)dx$ 
only reflects the geometry of the total spaces  but the allowed class of 
$v$ i.e., verticality with respect to $v$ is different. The density function of the 
DH measure of $v$ for 
$\pi'$ is at most that of 
$v$ for 
$\pi$. Further, 
the range of $v$ becomes larger for $\pi'$  compared with $\pi$. Combining these observations, the proof is done. 
\end{proof}

The above inequality can be confirmed in the following basic examples, 
with more explicit values. 

\begin{ex}[$\P^1$-bundle and flat ($\Q$-)Fano fibration]\label{ex:flat}
If $\pi\colon X=\A^1\times \P^1\to Y=\A^1\ni p=0$, 
naturally we can take $E=\pi^{-1}(p)\simeq \P^1$. In this case, 
$\W(\pi)=2e=5.436\cdots$. 

Much more generally, suppose $\pi$ is flat with integral fiber $\pi^{-1}
(p)=F$ with relative dimension $f$ and $\dim(X)=n$ as before. 
Then, since the completion of the generic point of $F$ 
is of the form $\O_p[[z_1,\cdots,z_f]]$, 
a valuation $v$ centered on $p\in Y$ naturally induces a valuation of $K(X)$ 
which we denote as $\pi^*v$ as follows: 
$$(\pi^*v)(\sum_{a_1,\cdots,a_f}c_{a_1,\cdots,a_f}z_1^{a_1}\cdots z_f^{a_f}):=\min\{v(c_{a_1,\cdots,a_f})\mid c_{a_1,\cdots,a_f}\neq 0\},$$
where the minimum exists since ${\rm Im}(v)$ is discrete. 
If we set the valuative ideals (coherent sheaves) as follows: 
for open subsets $U\subset X, V\subset Y$
\begin{align}
(\mathcal{O}_X\supset)\mathcal{J}_{\pi^*v}(x l)
&:=\{f\in \Gamma(\mathcal{O}_U)\mid (\pi^*v)(f)
\ge x l\}, \\ 
(\mathcal{O}_Y\supset)J_{v}(x l)(V)
&:=\{f\in \Gamma(\mathcal{O}_V) \mid v(f)
\ge x l\}, 
\end{align}
where $x, l$ are real number and positive integer respectively. 
Hence, 
$$\pi_*(\mathcal{J}_{\pi^*v}(x l)\cdot L^{\otimes l})=J_{v}(x l)\cdot 
\pi_*L^{\otimes l}$$ so that, combined with 
$(\pi_*L^{\otimes l})_p\simeq \mathcal{O}_{Y,p}^{\oplus h^0(F,L^{\otimes l}|_F)}$ as $\mathcal{O}_{Y,p}$-modules 
and the usual asymptotic Riemann-Roch formula, it easily follows that 
$${\rm DH}_X(\pi^* v_Y)=\binom{n}{f}(-K_X|_F)^{\cdot f}\cdot {\rm DH}_Y(v)$$ 
(cf., \cite{SZ} and Definition \ref{def:DH}). 
Thus, we conclude 
\begin{lem}[Flat ($\Q$-)Fano fibration]\label{lem:bdd.flat}
If $\pi$ is flat with integral $\pi^{-1}(p)$, 
$$\W(\pi)\le e^{n-f}\binom{n}{f}(-K_X|_F)^{\cdot f}\cdot  \widehat{\vol}(p\in Y)$$ 
holds. 
\end{lem}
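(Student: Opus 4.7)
The plan is to use the family of pullback valuations $\pi^{*}v_{Y}$ as admissible test valuations in the infimum defining $\W(\pi)$, where $v_{Y}$ ranges over valuations of $K(Y)$ centered at the base point $p$. Since the center of $\pi^{*}v_{Y}$ lies inside $F = \pi^{-1}(p)$, each such $\pi^{*}v_{Y}$ is vertical over $p$, hence admissible, giving $\W(\pi) \le \W(\pi^{*}v_{Y})$ for every $v_{Y}$.

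The first step is to combine the DH-measure identity derived immediately before the lemma,
\[
{\rm DH}_{X}(\pi^{*}v_{Y}) = \binom{n}{f}(-K_{X}|_{F})^{\cdot f}\cdot {\rm DH}_{Y}(v_{Y}),
\]
with the log-discrepancy identity $A_{X}(\pi^{*}v_{Y}) = A_{Y}(v_{Y})$. Together these yield
\[
\W(\pi^{*}v_{Y}) = e^{A_{X}(\pi^{*}v_{Y})}\int_{\R_{\ge 0}} e^{-x}{\rm DH}_{X}(\pi^{*}v_{Y}) = \binom{n}{f}(-K_{X}|_{F})^{\cdot f}\,\W(v_{Y}).
\]
Taking the infimum over $v_{Y}$ and using $\W(\pi) \le \W(\pi^{*}v_{Y})$ gives
\[
\W(\pi) \le \binom{n}{f}(-K_{X}|_{F})^{\cdot f}\cdot \W(p\in Y).
\]

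The second step is to invoke the Singularities Example (applied to the log terminal germ $p\in Y$ of dimension $n-f$), which gives $\W(p\in Y) = \tfrac{e^{n-f}}{(n-f)^{n-f}}\widehat{\vol}(p\in Y)$. Since $(n-f)^{n-f}\ge 1$ when $n > f$, we obtain the (looser) bound $\W(p\in Y) \le e^{n-f}\widehat{\vol}(p\in Y)$, and substituting this into the previous display gives the claimed inequality.

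The main obstacle is justifying the log-discrepancy identity $A_{X}(\pi^{*}v_{Y}) = A_{Y}(v_{Y})$. For a divisorial $v_{Y} = \mathrm{ord}_{E}$, flatness of $\pi$ together with integrality of $\pi^{-1}(p)$ ensures that the scheme-theoretic pullback $\pi^{-1}(E)$ is an integral prime divisor on $X$ (at least generically over $p$), and the standard pullback formula for the log discrepancy on $\Q$-Gorenstein klt varieties gives the equality. For general valuations one reduces to the divisorial case via Proposition \ref{prop:1.7}, which permits restricting the infimum defining $\W(\pi)$ to divisorial valuations, or alternatively via Jonsson-Musta\c{t}\u{a}-style approximation of $v_{Y}$ by divisorial valuations; the remaining steps are formal consequences of the preceding setup.
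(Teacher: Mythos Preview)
Your proof is correct and follows the same route as the paper, which derives the Duistermaat--Heckman identity ${\rm DH}_X(\pi^*v_Y) = \binom{n}{f}(-K_X|_F)^{\cdot f}\cdot{\rm DH}_Y(v_Y)$ immediately before the lemma and then simply writes ``Thus, we conclude''. You make explicit the two steps the paper leaves implicit --- the log-discrepancy identity $A_X(\pi^*v_Y)=A_Y(v_Y)$ and the passage through $\W(p\in Y)=\tfrac{e^{n-f}}{(n-f)^{n-f}}\widehat{\vol}(p\in Y)$ --- and you correctly observe along the way that the argument actually yields the sharper bound with an extra factor $(n-f)^{n-f}$ in the denominator.
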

\noindent
Note that in the right hand side, 
$\W(p\in Y)$ is an invariant of the base while 
$(-K_X|_F)^{\cdot f}$ is nothing but the anti-canonical volume of 
general fibers (as Fano varieties). 

Now, we move on to the case where $\pi$ is {\it not} flat.  
\end{ex}

\begin{ex}[Castelnuovo contraction cf., {\cite[A.7]{CDS}}]
\label{A2blup}
If $X={\rm Bl}_{p}(\A^2)\to Y=\A^2\ni p=0$, 
we confirm (following  \cite[A.7]{CDS}) that 
the weighted volume 
$\W(\pi)$ 
is attained when $v=\frac{\ord_E}{b}$ with 
$E=E_\pi$ the $\pi$-exceptional $(-1)$-curve {\it on} $X$ and 
$b=\frac{1}{\sqrt{2}}$. 
Indeed, for {\it prime} divisor $E$ {\it over} 
$X$ and consider 
$v:=\frac{{\rm ord}_E}{b}$, 
we can estimate/calculate the weighted volume 
as follows. For $b\in \Q_{>0}$ and 
sufficiently divisible $m$, 
\begin{align}
\pi_*\mathcal{O}(-mK_X)&\simeq \pi_*\mathcal{O}(-mE_\pi)
=\mathfrak{m}_{(0,0)}^m=\mathfrak{m}_{Y,p}^m\\ 
\label{30..}\pi_*\mathcal{O}_X\left(-\frac{m}{b}E\right)&\supset 
\mathfrak{m}_{Y,p}^{\frac{m}{b}}\\ 
\pi_*\mathcal{O}_X\left(-mK_X-\frac{m}{b}E\right)&\simeq \pi_*\mathcal{O}_X
\left(-m(E_\pi+\frac{1}{b}E)\right)\\
&=\mathfrak{m}_{Y,p}^{m(1+\frac{1}{b})}
\end{align}
implies that (cf., 
Proposition \ref{DHcal}) 
$${\rm DH}_X(v)\ge \left(1+\frac{1}{b}x\right)e^{-x}dx,$$
with equality holds if and only if $E=E_\pi$. 
Now we set $c:=\frac{1}{b}$. 
\begin{Rem}\label{rem:DH}
Note that the key simple observation 
\eqref{30..} holds for any blow up of 
$Y$ with center supported on $p$ and vertical 
$E$. 
\end{Rem}

Hence 
\begin{align}
\W(v)\ge b\cdot e^{\frac{1}{b}}(1+b)&=e^{c}\cdot \frac{c+1}{c^2}\\ 
&\ge e^{\sqrt{2}}\left(\frac{1+\sqrt{2}}{2}\right)  \hspace {2mm} (c=\sqrt{2} \text{ case})\\
&=\W(\pi)\\
&=4.96\cdots. 
\end{align}

Among smooth $2$-dimensional 
shrinkers whose morphism $\pi$ is birational, 
this weighted volume is the 
second biggest (cf., \cite[Question 6.6]{SZ}), 
though that of $\P^1\times \A^1\to \A^1$ has a larger 
weighted volume 
($=5.4\cdots$ {\text{(cf., Example \ref{ex:flat})}}). 
\end{ex}

The corresponding shrinking soliton metric to the above example \ref{A2blup} 
is 
that of \cite[\S 6]{FIK} with $k=1$. 

\begin{ex}[Divisorial contraction to point]
\label{divcont}
More generally, suppose $\pi\colon X\twoheadrightarrow Y$ 
is a divisorial contraction i.e., 
birational projective contraction 
with irreducible exceptional 
divisor $E_\pi$ with log terminal $X, Y$, 
where $-K_{X}$ is $\pi$-ample. 
In other words, $\pi$ is a plt blow up. 
We set the discrepancy $a:=a_Y(E)$ 
which is automatically positive by the 
negativity lemma. 

For here, we further assume the center is 
$0$-dimensional i.e., the closed point $p$. 
Then, similarly as above Example 
\ref{A2blup}, for $c>0$, 

\begin{align}
\W(c\cdot\ord_{E_\pi})
&=\frac{e^c}{c} \int_0^\infty e^{-x} \DH(c\cdot\ord_{E_\pi}) dx \\
&= \frac{e^c}{c}  
\int_0^\infty \frac{1}{c(n-1)!} e^{-x} (a + cx)^{n-1} dx\\ 
&=  \frac{1}{(n-1)!} \frac{e^c}{c^2} \sum_{k=0}^{n-1} \binom{n-1}{k} a^{n-1-k} c^k k!. 
\end{align}

If $Y$ is smooth $n$-dimensional 
and $X$ is a blow up 
at the maximal ideal of closed point $p$, 
then $a=n-1$ so that 
$$\W(c\cdot \ord_{E_\pi})=\frac{1}{(n-1)!} \cdot \frac{e^c}{c^2} \sum_{k=0}^{n-1} \binom{n-1}{k} (n - 1)^{n - 1 - k} k! c^k.$$

If we set a polynomial of $c$ as  
$P(c):=\frac{1}{(n-1)!} 
\sum_{k=0}^{n-1} \binom{n-1}{k} (n-1)^{n - 1 - 
k}k! c^k$, then the critical point (algebraic 
number) should satisfy a polynomial equation 
of degree $n$ with rational coefficients 
$$(c-2)P(c)+c P'(c) = 0,$$
and $\W(c\cdot \ord_{E_\pi})
=\frac{e^c}{c^2} P(c)$. 

Recall that \cite[Chapter 3, especially 3.3, 3.4, 3.5]{Mori} classified 
extremal divisorial 
contraction from smooth $3$-folds, 
which automatically 
includes the (birational) Fano fibrations 
which can be written as resolutions of $3$-dimensional 
log terminal cones. Indeed, 
note that such underlying cones 
have automatically terminal singularities, 
by the negativity lemma (\cite[3.39]{KollarMori}). 
The list is 3.3.1 to 3.3.5 in {\it 
op.cit}, their weighted volume 
can be estimated (well) also by 
the above formula in the same manner. We omit 
the calculation of explicit values here (for 
now). 
For more examples of this type 
and classification results found 
along the later developments; one can refer to 
e.g., \cite[\S 3.2, 3.5, 3.6]{Kawakita}. 
\end{ex}

The following example somewhat mixes Example \ref{ex:flat} ($\P^1$-bundle) 
and Example \ref{A2blup} ($-1$-curve contraction). 

\begin{ex}[{Non-geometric ruled surface \cite[Theorem A]{BCCD}}]\label{ex:BCCD}
If $X={\rm Bl}_{(0,0)}(\P^1\times \A^1)\to Y=\A^1\ni p=0$, 
we set $E_{e}$ as the exceptional divisor of 
${\rm Bl}_{(0,0)}(\P^1\times \A^1)\to\P^1\times \A^1$ and 
the strict transform of $0\times \P^1$ as $F$. Following 
Proposition \ref{DHcal2} again, the density function 
of $\DH$ is $\min\{x/c,1\}$ so that 
we calculate 
\begin{align}
\W(c\cdot \ord_{E_e})
&=\frac{e^c}{c}
\left(\int_0^c e^{-x} \left( \frac{x}{c} + 1 \right) dx + 
\int_c^{\infty} 2e^{-x}dx\right)\\ 
&=\frac{e^c}{c} \left(e^{-c}+(\frac{1}{c}+1)-(\frac{1}{c}+1)e^{-c}\right)\\ 
&=\frac{1}{c}\left(1+(\frac{1}{c}+1)(e^c-1)\right). 
\end{align}
By derivative calculation, the minimizer of the right hand side is 
attained at $c=1.1\cdots$ 
so that 
$\inf_c\W(c\cdot \ord_{E_e})=4.3\cdots$. 
By elementary transform of $X$ along $E$, it follows that 
$$\min_c \W(c\cdot \ord_{F})=\min_c \W(c\cdot \ord_{E_e}).$$

On the other hand, by Lemma \ref{lg}($=$\cite[cf., Definition 6.5]{SZ}), 
$\inf_{v}\W(v)$ where $v$ runs over those whose center is a closed point 
is at most $e^2=7.389\cdots$, as a 
rather weak upper bound. 
More precise calculation is as follows. 
We take any divisor $E$ over $X$ whose center is the 
singular point $q$ of the central fiber of $\pi$. 
Take a normal blow up of $X$ which realizes $E$ as $E\subset X'\xrightarrow{\varphi} X$. 
If we denote the image of $q$ in $X$ as $q'$, as before, we have 
\begin{align}
H^0(X',-m\varphi^*K_X-maE)\supset H^0(X,\mathfrak{m}_{q}^{\lceil ma\rceil}\O(-mK_X))
\end{align}
so that 
the Duistermaat-Heckman measure's 
density function satisfies $\mathcal{R}_{bE}(x)\ge \min\{b^2x,b\}$ 
(the non-differential point of the right hand side is $x=c$). 
Thus, 
\begin{align}
\W(c\ord_E)&\ge \frac{e^{2c}}{c}\left(\int_0^c e^{-x}(bx) +
\int_{c}^{\infty}e^{-x} \right)\\ 
&=\frac{e^{2c}}{c}\left(b\int_0^c x e^{-x} +
e^{-c} \right)\\ 
&=\frac{e^{2c}}{c}\left(b(1-(c+1)e^{-c})) +
e^{-c} \right)\\ 
&=\frac{1}{c}\left(\frac{e^{2c}-(c+1)e^{c}}{c} +
e^{c} \right), 
\end{align}
whose minimum is attained as the exceptional curve ($(-1)$-curve) of the 
blow up of $X$ along $\mathfrak{m}_q$ with $c=0.64\cdots$ so that 
$\W(\pi)=4.1\cdots$. 
\end{ex}

Note that \cite[Theorem A]{BCCD} 
constructed complete K\"ahler-Ricci solitons 
metrics on the above example, 
as a parabolic limit of K\"ahler-Ricci flow 
along the contraction ${\rm Bl}_{(0,0)}(\P^1\times \P^1)\to \P^1$. 
Using that, {\it op.cit} Theorem A 
completed classification of 
$2$-dimensional smooth complete 
K\"ahler-Ricci solitons 
under the 
bounded (scalar/sectional) curvature 
assumption, which is later removed by 
\cite[Theorem 1.2]{LW}. 
Those examples are contained in the above examples, in particular. 

It is a standard exercise to show that 
the list of 
Fano fibrations $X\to Y$ from 
$2$-dimensional smooth surface $X$ are the exact list of {\it loc.cit} 
(even {\it without} K-semistability assumption). 
We also note that Lemma \ref{lg:gen} can be checked between: 
\begin{align}
\W(\P^1\times \P^1\to {\rm pt})&=4 & \\ 
\le \W(\P^1\times \A^1\to \A^1)&=5.4\cdots {\text{(cf., Example \ref{ex:flat})}}, \\
\W({\rm Bl}_{(0,0)}(\P^1\times \A^1)\to \A^1\ni 0)&=4.1\cdots {\text{(cf., Example \ref{ex:BCCD})}}\\
\le \W({\rm Bl}_{(0,0)}(\A^2)\to \A^2\ni (0,0))&=4.9\cdots. 
\end{align}

\begin{ex}[Toric case]
Let us consider $T$-equivariant (klt) Fano fibration 
$T\curvearrowright (X\xrightarrow{\pi} Y\ni p)$, where $X$ is a $T$-toric variety i.e., 
a toric variety with respect to the algebraic torus $T$, 
$Y$ is a $T_Y$-toric variety with surjective homomorphism of 
algebraic tori $T\to T_Y$, so that $\pi$ is $T$-equivariant. 
In this paper, what we mean by 
{\it toric Fano fibration} is such a data 
$T\curvearrowright (X\xrightarrow{\pi} Y\ni p)$ 
(note that $T\to T_Y$ is automatically recoverable from it). 

If $X$ is smooth, and consider 
from symplectic geometric or differential geometric perspectives, 
this fits into the framework of {\it AK-toric (algebraic K\"ahler toric)} 
manifolds introduced and systematically studied by 
C.~Cifarelli \cite{Charlie1, Charlie2}, which 
generalize the Delzant's work \cite{Delzant} to {\it non-compact} ``toric" 
setup. 

In this case, the following is a folklore 
which should be known to experts (at least well-known 
for $\pi={\rm id}$ case cf., e.g., 
\cite{FOW}, \cite[\S 1]{CS19}, \cite[\S 2.5.3]{Od24a} and references 
therein). One would call it a corollary to Conjecture \ref{conj:SZ2step}. 

\begin{prop}
For a $T$-equivariant (klt) Fano fibration 
$T\curvearrowright (X\xrightarrow{\pi} Y\ni p)$, 
suppose $2$-step degeneration conjecture \ref{conj:SZ2step} holds. 
Then, it is K-semistable (in the sense of \cite{SZ}) for some 
$\xi\in N\otimes \R$. 
\end{prop}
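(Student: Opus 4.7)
The plan is to identify the desired $\xi$ as the unique minimizer of the weighted volume restricted to the Reeb cone, and then to deduce K-semistability from convexity of that functional together with the 2-step degeneration input.

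First, I would study the weighted volume as a function on the (abstract) Reeb cone
$$W: \mathcal{R}\longrightarrow \R_{>0}, \qquad W(\xi):=\W(v_\xi),$$
where $\mathcal{R}\subset N\otimes\R$ denotes the open cone of $\xi$ making $(T\curvearrowright Y,\xi)$ a good polarization. In the toric case, \eqref{Winfsum} and the $M$-grading of the anticanonical section ring give, after passing to the limit, a Laplace-transform representation
$$W(\xi) = \int_{P} e^{-\langle \vec m,\xi\rangle}\, d\mu(\vec m)$$
over a rational polyhedral moment-type cone $P\subset M\otimes\R$ with a piecewise-polynomial density $d\mu$ (which packages the base-type Duistermaat--Heckman measure of Definition \ref{def:DH}\eqref{def:DH2} together with anticanonical intersection data on the fibers; compare \cite[\S 2.5.3]{Od24a}, \cite{CS, FOW}). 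From this integral formula, $W$ is smooth, strictly convex in $\xi$ (by H\"older's inequality applied to the Laplace integral) and proper: $W(\xi)\to+\infty$ as $\xi$ approaches $\partial\mathcal{R}$ or $\|\xi\|\to\infty$. Hence $W$ attains a unique interior minimum at some $\xi_0\in\mathcal{R}$.

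Second, I would combine Proposition \ref{prop:1.7} with $T$-equivariance to rewrite the global infimum as the toric one: any vertical divisorial valuation $v$ can be replaced by its $T$-average, which is toric and of no greater weighted volume (by $T$-invariance of $\W$ and convexity of $\log\W$ along $T$-orbits). Thus $\W(\pi)=\inf_{\xi\in\mathcal{R}} W(\xi)=W(\xi_0)$. Now I would invoke Conjecture \ref{conj:SZ2step}: this infimum is realized by a K-polystable $\xi_0$-equivariant Fano fibration $(X^\sharp\to Y^\sharp,\xi_0)$ obtained as the central fiber of a two-step $T$-equivariant degeneration from $(X\to Y)$. Because $T$-equivariant degenerations preserve the $M$-graded character of the anticanonical section ring — hence the moment cone $P$ and the measure $d\mu$ — the weighted volume function of $(X^\sharp\to Y^\sharp)$ coincides with $W$ and is minimized at the same $\xi_0$.

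Third, I would transfer this to K-semistability of $(X\to Y,\xi_0)$ itself. For any $T$-equivariant special test configuration with $\G_m$-generator $\eta$, the central fiber is again toric with the same moment data, so its weighted volume function is still $W$, and by Definition \ref{review:Kst}
$$\DF(\Pi)=\frac{d}{dt}\Big|_{t=0} W(\xi_0+t\eta)=0$$
since $\xi_0$ is an interior critical point of the convex function $W$. The main obstacle, and the place where Conjecture \ref{conj:SZ2step} is essential, is the reduction from arbitrary special test configurations to $T$-equivariant ones: this is an equivariant K-stability / $T$-averaging step, known for Fano varieties (Zhuang) and in the present framework subsumed by the toric nature and uniqueness of the 2-step K-polystable limit, which forces any non-equivariant destabilizer to equivariantly degenerate to a product test configuration without decreasing its generalized Donaldson--Futaki invariant.
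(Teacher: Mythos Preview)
Your argument is considerably more elaborate than the paper's, and it misses the one-line observation that short-circuits almost everything you do. The paper argues as follows: by Conjecture~\ref{conj:SZ2step} there is a \emph{unique} quasi-monomial minimizer $v$ of $\W(-)$; uniqueness forces $v$ to be $T$-invariant, and since here $T$ is the \emph{full} torus of the toric variety $X$, a $T$-invariant quasi-monomial valuation is a toric valuation $v_\xi$ for some $\xi\in N\otimes\R$. But then the degeneration of Lemma~\ref{asdegen} is isotrivial (a toric variety degenerating along a one-parameter subgroup of its own torus), so $T\curvearrowright(X_v\to Y_v)$ is $T$-equivariantly isomorphic to $T\curvearrowright(X\to Y)$ itself (cf.\ \cite[2.32]{Od24a}). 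The K-semistability asserted by Conjecture~\ref{conj:SZ2step} for $(X_v\to Y_v,\xi)$ is therefore literally K-semistability of $(X\to Y,\xi)$, and the proof ends.

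You come close to this in your second step when you note that $(X^\sharp\to Y^\sharp)$ has the same weighted-volume function as $(X\to Y)$, but you stop short of concluding $X^\sharp\cong X$, and then in your third step you attempt to re-derive K-semistability from scratch via a convexity/critical-point computation of $\DF$. That detour creates the gap you yourself flag: reducing arbitrary special test configurations to $T$-equivariant ones. Your proposed fix (averaging, Zhuang-type equivariant K-stability, ``forcing non-equivariant destabilizers to degenerate to products'') is not established in this generality for Fano fibrations and is not something Conjecture~\ref{conj:SZ2step} supplies. By contrast, once you observe $X_v\cong X$, the conjecture hands you K-semistability against \emph{all} special test configurations directly, and no equivariant reduction is needed. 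Your first step (smoothness, strict convexity, properness of $W$ on the Reeb cone via the Laplace-transform representation) is correct and pleasant, but it is superseded by the uniqueness clause of the conjecture, which already singles out the minimizer without any analysis of $W$.
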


\begin{proof}
Take the quasi-monomial valuation $v$ which minimizes the weighted volume 
$\W(v)$ as we assume Conjecture \ref{conj:SZ2step}. 
By its uniqueness, it is $T$-invariant. 
Then following Lemma \ref{asdegen}, we obtain $T$-equivariant 
(isotrivial) degeneration 
$\Pi_\sigma\colon \mathcal{X}_\sigma\to \Y_\sigma\xrightarrow{f_\sigma} 
U_\sigma$ to $T\curvearrowright (X_v\to Y_v)$ and 
this $T\curvearrowright (X_v\to Y_v)$ is $T$-equivariantly 
isomorphic to original $T\curvearrowright (X\to Y)$ 
(cf., \cite[2.32]{Od24a}). Via this isomorphism, this $v$ gives rise to a 
positive vector field $\xi\in N\otimes \R$ (cf., e.g., 
\cite{Od24a}) for $T\curvearrowright Y$, so that the assertion follows 
from Conjecture \ref{conj:SZ2step}. 
\end{proof}

\end{ex}

\begin{ex}(Flipping contraction)
For the flipping contraction case, we leave  
the computations to future, as they  necessarily 
involve divisors {\it above} $X$ or 
non-divisorial valuations. 
Here is a question which Sun-Zhang 
inspires, for the expected 
termination of flips. 

\begin{Ques}
(See \cite[Last paragraph of \S 6.3]{SZ})
If there would be infinite sequence of flips 
$\{X_i\xrightarrow{\pi_i} Y_i\leftarrow X_i^+=X_{i+1}\} \quad (i=1,2,\cdots)$ 
in the (fixed) minimal model program, 
in particular, what can we say about its growth 
of the sequence 
$\{\W(\pi_i)\}_i$? 

It is obviously {\it not}  
necessarily monotonically increasing in 
general, but 
for any $i$, is there some big enough $i'$ 
such that $\W(\pi_i)<\W(\pi_{i'})$ for 
instance? 
\end{Ques}
Note that the latter would contradicts if 
$\{\W(\pi)\mid \dim(X)=n\}$ satisfies 
ACC and would lead to flip termination. 
\end{ex}

Now we come back to the general situation of Fano fibrations. 
Motivated by the above examples calculations and 
our formulae such as 
Proposition \ref{DHcal2} \eqref{divcase4} (also Prop \ref{DHcal}), we 
conclude this section by asking the arithmetic nature of 
weighted volumes. 

\begin{Ques}[cf., Kontsevich-Zagier \cite{KZ}]
Is weighted volume $\W(\pi)$ of Fano fibration germ 
$\pi\colon X\to Y\ni p$ always an exponential period in the sense of 
Kontsevich-Zagier (\cite[\S 4.3]{KZ}) 
or some variant (cf., e.g., 
\cite{CHH})? 
\end{Ques}


\section{More general theoretic aspects}

The topics discussed in this section are of general theoretic nature, 
which center around the moduli theory of Fano fibrations, 
as well as relation with the theory of Fano cones, among others. 

\subsection{Partial reduction to log terminal  
cone case}

In this subsection, we observe that for $T$-equivariant Fano fibration, 
one can associate Fano cone (log terminal cone), which do not lose the 
information. 

\begin{prop}\label{Ffka}

\begin{enumerate}
\item 
\label{rc1} 
For a Fano fibration 
$\pi\colon X\to Y$, 
its (relative) cone 
$$C_Y(L):=\Spec (\oplus_{l\in \Z_{\ge 0}}{H^0(X,L^{\otimes l})})$$ is log terminal and 
$\pi\colon C_Y(L)\to Y$ is a relative affine K-trivial fibration. 
\item 
\label{rc2}
For a $T$-equivariant Fano fibration 
$T \curvearrowright (X\xrightarrow{\pi}Y)$, 
its (relative) cone 
$C_Y(L)=\Spec (\oplus_{l\in \Z_{\ge 0}}{H^0(X,L^{\otimes l})})$ 
is a log terminal 
cone (Fano cone) with respect to a good $T\times \G_m$-action. 
\end{enumerate}

\end{prop}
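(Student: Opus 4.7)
The plan is to apply the standard Koll\'ar cone construction \cite{KollarMori} relatively over $Y$, reading off both assertions from a single partial resolution. Fix $r \in \Z_{>0}$ with $-rK_X$ Cartier and set $L := \O_X(-rK_X)$, $\mathcal{A} := \bigoplus_{l \ge 0} \pi_*L^{\otimes l}$; since $L$ is $\pi$-ample and $\pi$ is projective, $\mathcal{A}$ is a finite type graded $\O_Y$-algebra, so $C := C_Y(L) = \underline{\Spec}_Y \mathcal{A}$ is affine over $Y$ with a natural $\G_m$-action by grading whose fixed locus is the zero section $Y_0 \simeq Y$. I would exploit the partial resolution $\sigma \colon \tilde C := \mathrm{Tot}_X(L^\vee) \to C$, an isomorphism over $C \setminus Y_0$ that contracts the zero section $E \simeq X$ of the line bundle $p \colon \tilde C \to X$ onto $Y_0$; fiberwise one has $\O_{\tilde C}(E) \simeq p^*L^\vee$, equivalently $p^*L \sim -E$ on $\tilde C$.

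For the K-triviality: $\omega_{\tilde C/X} = p^*L$ gives $K_{\tilde C} = p^*(K_X + L) = (1-r) p^*K_X \sim_\Q \tfrac{1-r}{r} E$. Meanwhile on $C^\circ := C \setminus Y_0$---the $\G_m$-torsor over $X$ associated to $L$---the tautological section trivializes $q^*L$ for $q \colon C^\circ \to X$, so
\[
\omega_{C^\circ}^{\otimes r} \simeq q^*(\omega_X^{\otimes r} \otimes L) \simeq \O_{C^\circ}.
\]
Pairing these two computations through $\sigma$---equivalently, treating $\omega_C^{[r]}$ as a $\G_m$-graded reflexive $\mathcal{A}$-module with grading shift fixed by the $E$-coefficient above---would yield $rK_C \sim 0$, hence $K_C \sim_\Q 0$, so every fiber of $C \to Y$ has trivial canonical and $C \to Y$ is a relative K-trivial fibration.

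For the log terminality: substituting $\sigma^* K_C \sim_\Q 0$ into $K_{\tilde C} = \sigma^* K_C + a_E(C) \cdot E$ would give $a_E(C) = (1-r)/r$, equivalently $K_{\tilde C} + (1 - \tfrac{1}{r}) E = \sigma^* K_C$; thus $\sigma$ is a crepant partial resolution of the pair $(\tilde C, (1 - \tfrac{1}{r}) E)$. Pltness of this pair would reduce, via inversion of adjunction, to (a) klt-ness of the adjoint pair $(E, \Diff) = (X, 0)$---the different vanishes since $p$ is a line bundle---and (b) klt-ness of $\tilde C \setminus E \simeq C^\circ$, a $\G_m$-torsor over the klt $X$; both hold by hypothesis. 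Hence $\sigma$ is a plt blow-up and $C$ is klt by the standard correspondence between plt blow-ups and klt singularities.

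For (ii), the intrinsicity of $K_X$ supplies a natural $T$-linearization on $L$, so $\mathcal{A}$ inherits a $T$-action commuting with the grading $\G_m$-action, yielding $T \times \G_m \curvearrowright C$. Since $T \curvearrowright Y$ is good with unique closed fixed point $p$ and all $\G_m$-weights on $\mathcal{A}_{\ge 1}$ are strictly positive, the combined action has a unique closed fixed point---the cone vertex over $p$---making $C$ a log terminal Fano cone in the sense of \cite{CS}. The main obstacle in this plan is the plt verification, particularly the vanishing of the adjunction different along $E$: this is classical in the absolute cone setup, and I expect it to reduce cleanly to the line-bundle structure of $p \colon \tilde C \to X$ so that neither the possible non-flatness of $\pi$ nor the singularities of $Y$ interfere.
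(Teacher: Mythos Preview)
Your argument for (i) is essentially the paper's: both use the partial resolution $\sigma\colon \tilde C=\mathrm{Tot}_X(L^\vee)\to C$ and check a plt condition on $(\tilde C,E)$. The paper appeals directly to the \'etale-local product structure $(\tilde C,E)\simeq_{\text{loc}}(X\times\A^1,X\times 0)$, while you run inversion of adjunction; these are equivalent. You even supply the K-triviality computation on $C^\circ$, which the paper's proof in fact leaves implicit. One small caveat: when $\pi$ is birational, $\sigma|_E\colon E\simeq X\to Y_0\simeq Y$ is itself birational, so $E$ is \emph{not} $\sigma$-exceptional and $\sigma$ is small. Your discrepancy bookkeeping $K_{\tilde C}=\sigma^*K_C+a_E E$ then has to be read as $K_{\tilde C}+(1-\tfrac1r)E=\sigma^*(K_C+(1-\tfrac1r)Y_0)$; the plt conclusion still yields $C$ klt, but you should flag this case.

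For (ii) there is a genuine gap. Knowing that $T\curvearrowright Y$ is good and that the $\G_m$-weights on $\mathcal{A}_{\ge 1}$ are positive does \emph{not} by itself force the $T\times\G_m$-action on $C$ to be good: the $T$-weights $\vec m$ appearing in $R_l$ for $l>0$ could a priori spread in both directions, preventing the combined weight monoid in $M\times\Z_{\ge 0}$ from being strictly convex. The missing step is to control those $T$-weights. The paper does this by using that each $R_l=H^0(X,L^{\otimes l})$ is a finitely generated $R_0=\Gamma(\O_Y)$-module: the $T$-weights of $R_0$ already lie in a strictly convex cone (goodness on $Y$), and finitely many module generators for each $R_l$ then trap $\Gamma_l:=\{\vec m:R_{l,\vec m}\neq 0\}$ inside a set of the form $l\vec m_0+\text{(convex cone)}$. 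Combined with finite generation of $\oplus_l R_l$ as a $k$-algebra, this puts the full $(M\times\Z)$-weight set into a strictly convex rational polyhedral cone, which is exactly what ``good $T\times\G_m$-action'' means. You should insert this finite-generation argument in place of the bare assertion.
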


\begin{proof}

Note that $\pi_* L^{\otimes l}$ is a coherent sheaf on $Y$ 
with $T$-action (linearization) of each $l$ hence they correspond to 
finitely generated $\Gamma(\O_Y)$-module $\Gamma(\pi_* L^{\otimes l})
=H^0(X,L^{\otimes l})$. Moreover, they form a finite type graded $\Gamma(\O_Y)$-algebra. 
Consider the relative cone $C_Y(L)={\rm Spec}_Y \oplus_{l\ge 0} \pi_* L^{\otimes l}
={\rm Spec}H^0(L^{\otimes l})$. 

\eqref{rc1} 
Firstly, we show \eqref{rc1}  i.e., that $C_Y(L)$ is log terminal, so that it is a log terminal cone (Fano 
cone) 
with respect to the $T\times \mathbb{G}_m$-action. 
Consider the blow up of the vertex section $Z:=V(\oplus_{l>0} R_l)\simeq Y$, 
then you obtain $p\colon {\rm BC}(-rK_X)={\rm Spec}_{X}\oplus_{m\ge 0}L^{\otimes l}
={\rm Bl}_{Z}(C_Y(L))\to C_Y(L)$. 
Here, ${\rm Bl}_Z$ denotes the blow up along $Z$. 
We denote the exceptional divisor (with coefficient $1$) as 
$E\simeq Y$. We have 
$$K_{{\rm BC}(-rK_X)}=p^* K_{C_Y(L)}+(r-1)E$$ as in \cite[3.13, 3.14(4)]{Kol13}. 
On the other hand, since $({\rm Bl}_{Z}(C_Y(L)),E)$ is \'etale (or analytically) locally 
isomorphic to $X\times \mathbb{A}^1$ (resp., $(X \times \mathbb{A}^1, X\times 0)$)  
outside $E$ (resp., near $E$), it is purely log terminal. Hence, 
$C_Y(L)$ is klt.

Now we show \eqref{rc2}. 
For $l\ge 0$, we consider the $T$-eigendecomposition of $\Gamma(Y,\pi_*L^{\otimes l})=\oplus_{l,
\vec{m}}R_{l,\vec{m}}$ and put $\Gamma_l:=\{\vec{m}\in M\mid  R_{l,\vec{m}} \neq 0\}$. 
Since $\Gamma(\pi_*L^{\otimes l})$ is a finitely generated $\Gamma(\O_Y)$-module, 
there exists $\vec{m}_0\in M$ and a strictly convex rational polyhedral cone $\mathcal{C}\subset M\times \mathbb{R}$ such that 
$\Gamma_l\subset l\vec{m}_0+l\mathcal{C}$. Hence, 
$\cup_{l\ge 0}\Gamma_l$ is also inside a strictly convex rational polyhedral cone in $M\times \mathbb{R}\ni 
(\vec{m},l)$. 
Thus, the $T\times \mathbb{G}_m$-action on $C_Y(L)$ is a good action. 

\end{proof}

\begin{ex}

Let us consider the classical Example 
\ref{A2blup} i.e., 
when $X\to Y$ is a blow up of the origin at $\mathbb{A}_{z_1,z_2}^2$ with the exceptional divisor $e$, 
take $r=\frac{1}{2}$ so that $L=\mathcal{O}(1)=\mathcal{O}(-e)$. 
Then, $C_Y(L)$ is a quadratic cone $(z_1Z_2-Z_1z_2=0)\subset \mathbb{A}_{z_1,z_2}^2
\times \mathbb{A}_{Z_1,Z_2}^2=\mathbb{A}^4$ i.e., the (absolute) cone over $\mathbb{P}^1\times 
\mathbb{P}^1$ with respect to $\mathcal{O}(1,1)$. 
If we do consider the obvious higher 
dimensional generalization i.e., 
the blow up of the origin at $\mathbb{A}_{z_1,\cdots,z_n}^n$ with $L=\mathcal{O}(1)$ (cf., Example 
\ref{divcont}), 
then $C_Y(L)$ is the absolute cone over a 
Fano manifold which is an irreducible 
component 
cut by quadratic equations in $\mathbb{P}^{2n-
1}$. 

\end{ex}

Note that the Fano cone $T\times \G_m\curvearrowright 
C_Y(L)$ recovers the Fano fibration 
$X\to Y$; because $Y={\rm Spec}
(\Gamma(\O_{C_Y(L)})^{\G_m})$ 
and $R_l (l>0)$ can be also 
recovered as the eigen-subspace for the 
$\G_m$-action. From this perspective, 
one can naturally ask the following 
interesting question: 

\begin{Ques}[Reduction to cone]
For a $T$-equivariant 
Fano fibration 
$T \curvearrowright (X={\rm Proj}_{Y}(\oplus_{l\in \Z_{\ge 0}}{R_l})\to Y)$, 
we take the relative cone (Fano cone) 
${\rm Spec}_Y\oplus_{l\in \Z_{\ge 0}}{R_l}\to Y$ as Proposition \ref{Ffka} \eqref{rc2}. 

Would there be any relation between the  K-(semi)stability notion  
and other (possibly ``weighted") 
stability notions 
of the relative cone $C_Y(L)$, regarded as  
Fano cones? 
See e.g., Mabuchi-Nakagawa conjecture 
\cite{MN, ALL} in the same spirit. 
\end{Ques}



\subsection{Review of the $2$-step degeneration theory}\label{sec:2stepreview}

After the original $2$-step degeneration theory \cite{DSII, CSW} and later more algebro-geometric implementation by \cite{Li} for the former, 
\cite[Conjecture 6.4 (also 6.8)]{SZ} conjectures the following, which we 
briefly recall for completeness. 

\begin{Setup}
For any (real) valuation $v$ of the function field $K(X)$ of $X$, whose center $q$ lies inside $\pi^{-1}(p)$, 
suppose that both graded ring 
${\rm gr}_v(\oplus_{l\ge 0} H^0(L^{\otimes l}))$ and $\gr_v(\O_{Y,p})$ are of finite 
type. Then, we consider polarized fibration 
$X_v:=\Proj_{Y_v}({\rm gr}_v(\oplus_{l\ge 0} H^0(L^{\otimes l}))\to Y_v:=\Spec(\gr_v(\O_{Y,p}))$. 

Let $M$ be the groupification of the value group of $v$ (called the holomorphic spectrum \cite{DSII}), and let $N$ be its dual lattice. Set $T := N \otimes \G_m$. 
The natural groupification function 
$v\colon M\to \R$ is identified with a vector $\xi\in N\otimes \R$. 
Note that $T$ acts equivariantly on $X_v\to Y_v$. 
\end{Setup}

\begin{conj}[{\cite[Conjecture 6.4]{SZ}}]\label{conj:SZ2step}
For any Fano fibration over an affine pointed variety 
$X\xrightarrow{\pi}Y\ni p$, 
there is a unique quasi-monomial (hence, real valued) 
valuation $v$ of $K(X)$ whose center is 
supported inside $\pi^{-1}(p)$ 
and 
minimizes the weighted volume $\W(-)$ i.e., 
achieves $\W(\pi)$. 
The associated 
$T\curvearrowright (X_v\to Y_v)$ is K-semistable (Fano fibration) with repect to $\xi$, which comes from Lemma \ref{asdegen}. 
\end{conj}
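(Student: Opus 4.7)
The plan is to mimic the now-classical approach to C.~Li's stable degeneration conjecture for klt singularities (carried out by Li, Li--Xu, Blum, Blum--Liu, Xu--Zhuang, Li--Wang--Xu, among others), adapted to the relative Fano fibration setup. I would split the conjecture into four steps: (a) existence of a minimizer, (b) the minimizer is quasi-monomial, (c) the associated $2$-step degeneration is K-semistable, (d) uniqueness of the minimizer.

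For (a), by Proposition~\ref{prop:1.7} one can pass from valuations to (vertical) graded sequences of ideals, and then to pairs $(I,m)$. The first move is to establish the relevant properness of $\W$: combine the formulae of Proposition~\ref{DHcal} with Izumi-type bounds on vertical valuations with center inside $\pi^{-1}(p)$ to show that $\W(v)\to\infty$ as either $A_X(v)\to\infty$ or the ``norm'' of $v$ (in the sense of \cite{BHJ}) blows up. This restricts minimizing sequences to a compact subset (in the quasi-monomial topology) of the Berkovich space relative to $\pi^{-1}(p)$. Lower semicontinuity of $v\mapsto \W(v)$, which follows from the $\vol_{X|Z}$-representation of ${\rm DH}(v)$ in Lemma~\ref{ncrv} and Proposition~\ref{DHcal}, then produces a minimizer $v_{\min}$.

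For (b), I would adapt the Xu--Zhuang argument: take a divisorial approximation $E_i$ of $v_{\min}$ constructed via \cite{JM}, and show that the corresponding weighted volumes converge. The minimality of $v_{\min}$, combined with a Newton--Okounkov convexity argument (using the Okounkov body tools of \cite{BC} and the Appendix of \cite{SZ}), forces the approximants to cluster on the quasi-monomial locus; otherwise one could perturb $v_{\min}$ along a transcendental direction to strictly decrease $\W$, contradicting minimality. For (c), given the quasi-monomial minimizer $v_{\min}$, invoke Lemma~\ref{asdegen} to produce the degeneration $(X_{v_{\min}}\to Y_{v_{\min}})$ with its $T$-action and Reeb vector $\xi$. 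If this were \emph{not} K-semistable, there would be a special test configuration whose Donaldson--Futaki invariant $\frac{d}{dt}|_{t=0}\W_{t=0}(\xi+t\eta)$ is negative, and, via the standard equivariance/concatenation-of-test-configuration construction (as in Li--Xu or \cite{BLXZ}), this test configuration can be pulled back to a $1$-parameter family of valuations on $X$ with $\W$-derivative strictly negative at $v_{\min}$, contradicting that $v_{\min}$ is a minimizer.

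For (d), uniqueness should follow from strict convexity of $\log\W$ along geodesics in the space of valuations whose centers lie in $\pi^{-1}(p)$, analogous to \cite{BLXZ}; the convexity itself comes from the log-concavity of Laplace transforms of DH measures together with Brunn--Minkowski inequalities on relative Newton--Okounkov bodies supplied by \cite{BC}. The main obstacle, in my view, is step (c) in the present relative (non-conical) setting: the translation of a destabilizing test configuration into a genuinely vertical perturbation of $v_{\min}$ is delicate because $Y$ need not be a cone, so one cannot simply copy the Fano-cone arguments. Closely related, step (d) requires a relative analogue of the Brunn--Minkowski/convexity machinery used in the Fano-cone case, and checking strict convexity across the whole quasi-monomial locus (not merely a torus-invariant slice) seems to need new input beyond the techniques already available in \cite{SZ}.
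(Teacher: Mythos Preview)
The statement you are attempting to prove is stated in the paper as a \emph{conjecture} (it is \cite[Conjecture 6.4]{SZ}, recalled here as Conjecture~\ref{conj:SZ2step}), and the paper does \emph{not} provide a proof. There is therefore nothing in the paper to compare your proposal against: the author explicitly treats this as an open problem, uses it as a hypothesis elsewhere (e.g., in the toric proposition and in the bubbling construction of \S 3.4), and nowhere claims to establish it.

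As a research program your outline is the natural one, closely modeled on the resolution of Li's stable degeneration conjecture for klt singularities, and you correctly identify the two places where the existing technology does not transfer directly: in step~(c), lifting a destabilizing special test configuration of $(X_v\to Y_v)$ back to a vertical perturbation of $v_{\min}$ on $X$ is genuinely new when $Y$ is not a cone (the equivariance/concatenation arguments of Li--Xu and \cite{BLXZ} rely on the cone or Fano structure of the base, which is absent here), and in step~(d) the strict convexity of $\log\W$ along geodesics of valuations has not been established in the relative Fano fibration setting. Your step~(a) also glosses over a real issue: the Izumi-type bounds and compactness you invoke are available for valuations centered at a point, but here the center is only required to lie in the fiber $\pi^{-1}(p)$, which can be positive-dimensional, so the relevant properness/compactness statement needs a genuinely relative argument. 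In short, your proposal is a reasonable blueprint but remains a sketch of a program for an open conjecture, not a proof.
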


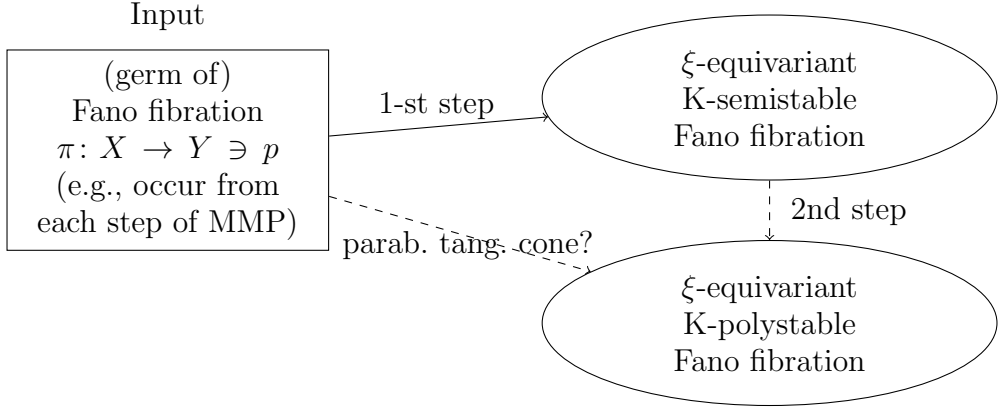
\begin{figure}\label{Fig0}
\begin{tikzpicture}
\usetikzlibrary{positioning}
    \tikzset{Start/.style={rectangle,  draw,  text centered, text width=4cm, align=center}};
    \tikzset{Process/.style={ellipse,  draw,  text centered, text width=4cm, align=center}};
     \tikzset{Process2/.style={ellipse,  draw,  text centered, text width=4cm, align=center}};  
     \tikzset{Remain/.style={circle,  draw,  text centered, text width=0.5cm, align=center}};  
     \tikzset{Remain2/.style={circle,  draw,  text centered, text width=0.2cm, align=center}}; 
 \node[Start](a)at (-1,0.8){(germ of) Fano fibration\\  $\pi\colon X\to Y\ni p$\\ 
 (e.g., occur from each step of MMP)}; 
 \node[above=3pt of a] {Input};
 \node[Process] (b) at (7,1.5){$\xi$-equivariant K-semistable Fano fibration}; 
\node[Process2](c)at (7,-1.5){$\xi$-equivariant K-polystable Fano fibration};
    \draw[->]  (a) --(b)node[xshift=-126pt,yshift=-3pt]{$1$-st step};
       \draw[->, dashed]  (a) --(c)node[above,xshift=-114pt,yshift=+20pt]{parab.\ tang.\ cone?};
       \draw[->, dashed]  (b) --(c)node[above,xshift=+30pt,yshift=+33pt]{2nd step};   
\end{tikzpicture}

\caption{$2$-step degenerations (\cite[\S 6.2, \S 6.4]{SZ})}\label{2step}
\end{figure}

Sun-Zhang also gives a conjectural description of the minimizing valuation 
$v$ 
via the K\"ahler-Ricci flow (\cite[\S 6.4, (6.5)]{SZ}, cf., also 
analogous \cite[(3.4)]{CSW}, \cite[2.27, 2.28]{Od24c}). 


\subsection{Compact moduli spaces of K-polystable equivariant Fano fibrations} 

\subsubsection*{Preparation}
To proceed to discussions related to moduli theory, 
we first discuss two (related) preparatory materials: 
\begin{enumerate}
\item \label{bbss} boundedness of K-semistable $T$-{\it equivariant} Fano fibrations,  
\item (bigger) parameter space of $T$-{\it equivariant} Fano fibrations, including 
\eqref{bbss}. 
\end{enumerate}
The key to the former is 
the following conjecture by Sun-Zhang. 

\begin{conj}[Boundedness {\cite[\S 6.3, after Conj. 6.7]{SZ}}]\label{conj:bdd}
For any real positive number $c>0$, 
the set of isomorphism class of 
$T$-equivariant ($\Q$-)Fano fibrations 
$T\curvearrowright (X\xrightarrow{\pi}Y)$ 
whose weighted volumes $\W(\pi)$ are at least 
$c$, are bounded i.e., parametrized inside a finite type (quasi-compact) $\k$-scheme. 
\end{conj}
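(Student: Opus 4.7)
The strategy is to exploit the lower bound $\W(\pi) \ge c$ to enforce uniform $\epsilon$-kltness on $X$, and then invoke Birkar's boundedness machinery for Fano fibrations with controlled singularities. First, combining the local--global inequality $\W(q \in X) \ge \W(\pi)$ from Lemma \ref{lg}, applied at every closed point $q \in \pi^{-1}(p)$, with the explicit formula $\W(q \in X) = \frac{e^n}{n^n}\widehat{\vol}(q \in X)$, the hypothesis yields a uniform lower bound
\[
\widehat{\vol}(q \in X) \ge \frac{n^n}{e^n}\, c
\]
at every closed point of $\pi^{-1}(p)$. By Li's theorem on normalized volume together with subsequent work of Li--Liu--Xu, such a uniform lower bound on $\widehat{\vol}$ implies uniform $\epsilon$-kltness with $\epsilon = \epsilon(c,n) > 0$, at least along $\pi^{-1}(p)$.

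Second, I would horizontally compactify via Lemma \ref{lg:gen} to obtain a projective $\overline{\pi}\colon \overline{X} \to \overline{Y}$ that still carries the $T$-action, and then apply Birkar's boundedness results for $\epsilon$-klt Fano fibrations (\cite{BAB, Birkar.bdd, BC}) to place $\overline{X} \to \overline{Y}$ in a bounded parameter scheme $\mathcal{S}$. Restricting to the locus where the boundary $\overline{Y}\setminus Y$ has the Cartier divisorial form required by Lemma \ref{ncrv} produces boundedness of $(X,\pi)$ itself. The $T$-equivariance is handled at the end: the locus in $\mathcal{S}$ supporting an equivariant action of a rank-$\dim T$ torus is a constructible subscheme, hence still of finite type, so equivariant boundedness reduces to the non-equivariant statement.

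The principal obstacle lies precisely in the passage from the first to the second step: the weighted volume $\W(\pi)$ \emph{a priori} only constrains singularities of $X$ inside the special fiber $\pi^{-1}(p)$, whereas the horizontal compactification $\overline{X}$ may acquire uncontrolled singularities along $\overline{X}\setminus X$ that prevent direct application of Birkar's theorems. Overcoming this should require either choosing $\overline{X}$ very carefully (e.g.\ as a log canonical compactification engineered from the torus action, or by running a relative MMP that preserves $\epsilon$-kltness over $\overline{Y}$), or replacing $(X,\pi)$ by the relative Fano cone $C_Y(L)$ of Proposition \ref{Ffka} and reducing to boundedness of $K$-semistable Fano cones with bounded-below normalized volume, where results of Li--Wang--Xu and Xu--Zhuang apply directly. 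The latter reduction, however, ultimately depends on a finite-generation statement most cleanly supplied by the $2$-step degeneration Conjecture \ref{conj:SZ2step}, so a truly unconditional proof of Conjecture \ref{conj:bdd} will likely need to proceed in parallel with, rather than after, the resolution of the Sun--Zhang conjectural framework.
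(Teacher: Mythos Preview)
The statement you are attempting to prove is stated in the paper as an open \emph{conjecture} (attributed to Sun--Zhang), not as a theorem; the paper offers no proof. What the paper does provide, in the Remark and Proposition immediately following the conjecture, is a collection of partial boundedness results: boundedness of general fibers (via the local--global comparison and \cite{BAB}), uniform $\delta$-lc control on the base (via \cite{BC}, \cite{Birkar.sing}), and boundedness of the bases (via \cite{XZ}). The paper then isolates the residual difficulty not as a compactification issue but as a question of \emph{weight control}: obtaining a uniform upper bound on the $T$-weights on $H^0(Y,-lK_Y)$ and $H^0(X,-lK_X)$ for fixed $l$.

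Your first step --- the local--global comparison forcing a uniform lower bound on $\widehat{\vol}$ along $\pi^{-1}(p)$, hence uniform $\epsilon$-kltness of $X$ --- matches the paper's reasoning exactly. But your second step diverges and contains a genuine gap. Horizontally compactifying via Lemma~\ref{lg:gen} does not obviously produce a Fano fibration $\overline{X}\to\overline{Y}$ to which Birkar's boundedness applies: the lemma compares weighted volumes but says nothing about singularities of $\overline{X}\setminus X$, nor does it guarantee $-K_{\overline{X}}$ remains relatively ample. More seriously, your final claim that ``equivariant boundedness reduces to the non-equivariant statement'' by passing to a constructible locus in the parameter scheme is where the real difficulty hides. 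The base $Y$ is an affine $T$-cone, not a projective variety, and the $T$-action is not a decoration but is what makes the moduli problem finite-dimensional at all; without controlling the $T$-weights one cannot embed these objects into a common Hilbert scheme in the first place. This is precisely the obstruction the paper singles out, and your proposal does not address it.
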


It is well-known that boundedness issue 
is a necessary preparatory 
step for many moduli construction 
which is essentially 
independent from other steps; 
note that actual construction of 
moduli space (as certain nontrivial {\it quotient} of good/semistable locus) 
usually involves independent 
discussions, which normally involve finer analysis such as stabilities. 
\footnote{recall the original 
construction of $M_g$ in \cite{Mum65}.}
For the case of relative dimension $0$, the above conjecture is solved by 
\cite{XZ}, after many substantial progresses such as 
\begin{itemize}
\item 
\cite{HLQ, LMS} (including $2$-dimensional case), 
\item \cite{LMS, ZhuangII}
(including $3$-dimensional case) and 
\item \cite{Jiang} (quasi-regular case). 
\end{itemize}

\begin{Rem}[Boundedness]
Given the recent deep boundedness 
results of 
C.~Birkar \cite{Birkar.before, BAB, Birkar.sing, Birkar.bdd} 
and Birkar-Chen \cite{BC} on boundedness of Fano fibrations and their 
singularities, 
the above conjecture \ref{conj:bdd} 
seems to follow once one somewhat develops along their line. 
Indeed, firstly, 
from the local-global comparison of weighted volume 
(Lemma \ref{lg}$=$\cite[after Def. 6.5]{SZ}) combined with \cite{XZ}, it follows that the (log terminal) singularities which appear on the total space $X$ are bounded. 
This already leads to some non-trivial 
boundedness via \cite{BAB, BC, Birkar.sing, 
XZ} i.e., boundedness of fibers and the bases as 
follows. 

\begin{prop}[Boundedness of fibers and base]
For fixed positive integer $n$, non-negative integer $f$ and $\epsilon>0$, set 
\begin{align}
\mathcal{S}_{\epsilon,f,n}&:=\{\text{$T$-equivariant $\Q$-Fano fibration } T\curvearrowright(X\xrightarrow{\pi}Y)\mid \dim(X)=n,{\rm rdim}(\pi)=f,\W(\pi)>\epsilon\},\\ 
\mathcal{S}'_{\epsilon,f,n}&:=\{[T\curvearrowright(X\xrightarrow{\pi}Y)]\in 
\mathcal{S}_{\epsilon,f,n} 
\mid \dim(X)=n,{\rm rdim}(\pi)=f,\W(\pi)>\epsilon, Y:\text{$\Q$-factorial}\}. 
\end{align}
Here ${\rm rdim}(\pi)$ means the 
relative dimension of $\pi$ i.e., $\dim(X)-\dim(Y)$. 
For the latter, 
recall that $\Q$-factoriality of $Y$
holds when $X$ is $\Q$-factorial and $\pi$ is elementary 
extremal contraction 
(cf., e.g., 
\cite[3.18]{KollarMori}). Moreover, obviously 
$\mathcal{S}'_{\epsilon,f,n}=\mathcal{S}'_{\epsilon,f,n}$ if the base dimension 
$n-f$ is at most $2$. 
These sets $\mathcal{S}_{\epsilon,f,n}$ 
and $\mathcal{S}'_{\epsilon,f,n}$ 
satisfy the following boundedness type results: 
    \begin{enumerate}
\item (fibers' boundedness) \label{36i} general fibers of $\mathcal{S}_{\epsilon,f,n}$, which are $\Q$-Fano varieties are bounded 
\item (singularity of base) \label{36ii} there exists $\delta>0$ such that 
the base $Y$ for $\pi\in \mathcal{S}'_{\delta,f,n}$ 
are all $\delta$-lc. 
\item (bases' boundedness) the base $Y$ for $\pi\in \mathcal{S}'_{\delta,f,n}$ are bounded. 
\end{enumerate}
\end{prop}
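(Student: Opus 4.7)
The plan is to cascade through three successively refined boundedness results, each building on the previous.

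First I would convert the weighted volume lower bound on $\pi$ into a singularity bound on the total space $X$. By Lemma \ref{lg}, for every closed point $q\in \pi^{-1}(p)$ one has
\[
\widehat{\vol}(q\in X)\ \ge\ \tfrac{n^n}{e^n}\cdot \W(\pi)\ >\ \tfrac{n^n}{e^n}\,\epsilon.
\]
Invoking the Xu--Zhuang boundedness \cite{XZ} of klt singularities with positive lower bound on normalized volume, one obtains $\epsilon'=\epsilon'(n,\epsilon)>0$ such that $X$ is $\epsilon'$-klt at every closed point of $\pi^{-1}(p)$, hence in a Zariski open neighborhood thereof; since we are working with the germ of $\pi$ at $p$, this controls the singularities of $X$ throughout the geometry relevant to the fibration.

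For (\ref{36i}), the general fiber $F=\pi^{-1}(y)$ is a $\Q$-Fano variety of dimension $f$. Spreading $\pi$ out over a suitable dense open subset of $Y$ and using lower semicontinuity of minimal log discrepancies together with inversion of adjunction for the fiber inclusion $F\hookrightarrow X$, one deduces that $F$ is $\epsilon''$-klt for some $\epsilon''=\epsilon''(n,f,\epsilon)>0$ inherited from $\epsilon'$. Then Birkar's BAB theorem \cite{BAB} for $\epsilon''$-klt Fano varieties of fixed dimension $f$ produces the desired bounded family.

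For (\ref{36ii}), I would pass from klt bounds on $X$ to lc bounds on $Y$ via the canonical bundle formula of Kawamata--Ambro applied to the Fano contraction $\pi$. Under $\Q$-factoriality, $\pi$ is an elementary extremal contraction, so the discriminant part of the formula can be controlled by the recent results of Birkar on singularities of Mori fiber spaces (\cite{Birkar.sing}, see also \cite{BC}). This produces $\delta_0=\delta_0(n,f,\epsilon)>0$ such that $Y$ is $\delta_0$-lc. To obtain the self-referential statement asked for (``$\W(\pi)>\delta$ implies $\delta$-lc''), one takes $\delta\le \min(\delta_0(n,f,\delta),\delta_0)$; since the dependence $\epsilon\mapsto \delta_0(n,f,\epsilon)$ is monotone and continuous, an elementary fixed-point argument produces such a $\delta$. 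For (iii), having $Y$ of fixed dimension $n-f$, $\delta$-lc, and arising as the base of a Mori fiber space from an $\epsilon'$-klt total space, the boundedness theorem of Birkar--Chen \cite{BC} for bases of Mori fiber spaces with singularity bounds concludes that $Y$ lies in a bounded family.

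The main obstacle is the singularity-of-base step. Transferring klt on $X$ to lc on $Y$ through the canonical bundle formula requires a uniform estimate on the discriminant divisor across the entire class $\mathcal{S}'_{\delta,f,n}$ simultaneously; this is precisely the content of the deepest tools from \cite{Birkar.sing, BC}, and the circular dependence between the assumed $\W(\pi)>\delta$ and the produced $\delta$-lc conclusion requires delicate bookkeeping to avoid losing effectivity. The other steps, in comparison, reduce rather cleanly to already-established boundedness theorems once the klt bound on $X$ is in hand.
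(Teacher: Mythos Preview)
Your strategy for (\ref{36i}) and (\ref{36ii}) is essentially the paper's: use Lemma~\ref{lg} together with \cite{XZ} to obtain a uniform $\epsilon'$-klt bound on $X$, pass to general fibers by adjunction and invoke \cite{BAB} for (\ref{36i}), and combine the canonical bundle formula with \cite{Birkar.sing, BC} for (\ref{36ii}). The paper routes the $\epsilon'$-klt bound on $X$ slightly differently---through a uniform bound on the $\Q$-Cartier index via the finite degree formula \cite{XZ0} applied to the index-one cover---but the destination is the same. Your fixed-point manoeuvre for the self-referential $\delta$ is over-engineered: the paper treats (\ref{36ii}) as an immediate consequence of \cite[1.3]{BC} and \cite{FM} without any circularity, so the apparent self-reference in the statement is loose notation, not a genuine constraint to be resolved.

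For (iii) your argument has a real gap. You appeal to a ``boundedness theorem of Birkar--Chen \cite{BC} for bases of Mori fiber spaces,'' but \cite{BC} concerns \emph{singularities} on fibrations, not boundedness of the base as a variety, and in any case $Y$ here is \emph{affine}, so boundedness statements formulated for projective bases do not apply. The paper's argument instead exploits the $T$-equivariant structure that you never use: since the $T$-action on $Y$ is good with a positive vector field, $Y$ is itself a klt (Fano) cone, and once (\ref{36ii}) supplies the $\delta$-lc control, the Xu--Zhuang boundedness \cite{XZ} for log Fano cone singularities gives the conclusion directly. That passage through the cone structure of $Y$ is the missing idea.
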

\begin{proof}
Firstly, consider the item \eqref{36i}. 
The subset of $\mathcal{S}_{\epsilon,f,n}$ 
with smooth $X$, one can use 
smoothness of the generic fiber (generic 
smoothness) 
and apply \cite{KMM} to prove 
\eqref{36i}. For general case, we apply 
essentially the 
same idea but with more technicalities: 
by the local-global comparison lemma \ref{lg} 
(cf., \cite[after Def. 6.5]{SZ}) 
and the finite degree formula of 
the local normalized volume \cite{XZ0} 
(cf., also \cite{Li, SS} etc), applied to (local) index $1$ cover, 
$\Q$-Cartier indices of the total space  
$X$ are uniformly bounded above by a constant. Hence, in particular, 
there is some uniform $\epsilon>0$ 
such that for any $\pi\in 
\mathcal{S}_{\epsilon,f,n}$, $X$ is 
$\epsilon$-log terminal. Combined with 
the simple generic adjunction (cf., \cite[5.17]{KollarMori}), the general 
fibers 
are also uniformly 
$\epsilon$-log terminal for uniform 
$\epsilon>0$. Given this arguments, 
the first item \eqref{36i} now 
follows from the famous result of 
Birkar \cite{BAB} (Borisov-Alexeev-Borisov 
conjecture). 

The second item follows from \cite[1.3]{BC} (cf., also \cite{Birkar.before}, 
\cite[1.2]{Birkar.sing}), 
combined with the canonical bundle formula 
\cite{FM}. 
The last item 
then follows from \eqref{36ii} 
(or Lemma \ref{lem:bdd.flat} for flat cases) 
combined with \cite{XZ}. 
\end{proof}
The remaining subtle problem seems to lie in 
the following: 
\begin{Ques}[variation or weight control]
For $[T\curvearrowright (X\xrightarrow{\pi}Y)]\in \mathcal{S}_{\epsilon,d,n}$, 
give a uniform upper bound of 
the weights of 
$T\curvearrowright H^0(Y,-
lK_Y)$ 
(and $T\curvearrowright H^0(X,-
lK_X)$)  
for fixed $l\gg 0$. 
\end{Ques}
The author expects this is related 
to stability of the base. 
\end{Rem}

\subsubsection*{Lower semicontinuity} 
The (expected) lower semicontinuity of 
$\W(\pi)$ with respect to variation of 
the family $\pi$ also seems to approachable 
by the method of using Birkar's 
bounded complements (\cite[6.4]{BLXZ})  
combined with the relative versions 
developed in \cite[see e.g., 
Theorem 1.7]{Birkar.bdd}. We do not discuss 
further details in this paper. 

\begin{Setup}(Preparing parameter space)\label{set:par2}
Note that for a $T$-equivariant Fano fibration 
$T \curvearrowright (X\xrightarrow{\pi}Y)$, 
its (relative) cone 
$C_Y(L)=\Spec (\oplus_{l\in \Z_{\ge 0}}{H^0(X,L^{\otimes l})})$ with its 
good $T\times \G_m$-action obviously recovers 
$T \curvearrowright (X\xrightarrow{\pi}Y)$, as it is so for 
{\it family} of $T$-equivariant Fano fibrations as well. 
Motivated by this fact, we consider 
the $N\times \Z_{\ge 0}$-graded 
ring $\oplus_{l\in \Z_{\ge 0}}{H^0(X,L^{\otimes l})}$ and 
its homogeneous generators. 
Suppose that $s$ of the generators have weights $0$ 
for the $\G_m$-action i.e., base direction, 
and the remained $u+1$ of them have weights $\vec{0}$ 
for the $T$-action i.e., fiber direction. 

Then, consider a 
multi-graded Hilbert scheme (\cite{HS, AZ}) which parameterizes the 
corresponding embedding into $\A^{s+u+1}$ and denote it by ${\rm MH}$. 
This can be taken as a finite type scheme over $\k$ as we assume 
Conjecture \ref{conj:bdd}. 
By \cite[1.2]{HS}, it is a projective scheme. 
Using \cite[Cor 24]{hull} to stratify ${\rm MH}$, to obtain a 
quasi-projective (${\rm MH}-$)scheme $H$ which parametrizes 
$T\times \G_m$-equivariant ($\Q$-Gorenstein family of) $\Q$-Fano cones 
including $C_Y(L)$. By e.g., \cite[Lemma 3.1]{Kol13}, it can be seen as 
parameter space of $T$-equivariant Fano fibrations (whose total space is 
admissible (resp., $T$-faithfully flat) in the sense of \cite{HS} (resp., \cite{Od24a})). 

Note that the centralizer of $T\times \G_m$ in ${\rm GL}(\A^{s+u+1})$ is 
reductive and denote by $G$. Then, $G\curvearrowright H$ preserves the 
isomorphic classes of $T$-equivariant fibrations and its quotient stack 
$[H/G]$ 
can be regarded as their moduli stack. 

Note that the $2$-step degeneration conjecture 
\cite[Conjecture 6.4]{SZ} (as natural generalization of \cite{DSII, CSW}) 
expects existence of 
degeneration of $T\curvearrowright (X\xrightarrow{\pi} Y)$ 
to K-semistable $T$-equivariant Fano fibration $T\curvearrowright (Z\to W)$. 
We expect that if we fix the multi-Hilbert function of 
$T\curvearrowright (X,L)\to Y$ and 
take many enough homogeneous generators of the $N\times \Z_{\ge 0}$-graded ring 
$\oplus_l H^0(X,L^{\otimes l})$ to embed $C_Y(L)$ to $\A^{s+u+1}$ 
and consider the above $H$, 
then all associated $T\curvearrowright (Z\xrightarrow{\pi_Z} W)$ 
and degeneration to them 
of Lemma \ref{asdegen} type along an affine variety $U_\sigma$ 
are realized inside $H$ i.e., 
by some morphism $U_\sigma\xrightarrow{m_{\pi_Z}}H$ for each $\pi_Z$. 
We call such $H$, a {\it big enough} parameter scheme of 
$T$-equivariant Fano fibrations. 
\end{Setup}

For moduli construction and its properness, we use the above 
parameter scheme in Setup \ref{set:par2} and expect the following 
stratification structure, after \cite{AHLH, Od24b}: 

\begin{conj}[{Higher $\Theta$-stratification cf., \cite[\S 3]{Od24b}}]\label{hts}
Considering the class of $T$-equivariant 
Fano fibrations $\pi\colon X\to Y$, 
fix multi-Hilbert function 
of $T\curvearrowright Y$ and the 
Hilbert polynomial of $-K_X$ at $\pi$-fibers. 

Then, there is a parameter scheme 
$H\curvearrowleft G$ of 
$T$-equivariant Fano fibrations ($G$-action 
preserves the fibrations isomorphism class), 
which is 
big enough in the sense as above, and 
the weighted volume $\W(-)$ is lower semicontinuous and induces a 
{\it higher $\Theta$-stratification} with finite strata 
$\{\mathcal{Z}_c:=\{\W(-)=c\}\}_c$
on 
$[H/G]$, in the sense of \cite[Definition 3.17]{Od24a} (extending \cite{AHLH}),  
which encode the generalized test configurations of Lemma \ref{asdegen} 
familywise in the form $\mathcal{Z}_c\times [U_\sigma/T]\to \mathcal{Z}_c$. 
\end{conj}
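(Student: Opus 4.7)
The plan is to assemble four ingredients in parallel with the compact Fano case treated in \cite{AHLH}, replacing the K-moduli stack of Fano varieties by the stack of $T$-equivariant Fano fibrations constructed in Setup \ref{set:par2}, and using the degenerations provided by Lemma \ref{asdegen} to install filtrations along each stratum. First I would invoke the boundedness Conjecture \ref{conj:bdd} (together with the partial results collected after its statement, which reduce it via the local-global comparison Lemma \ref{lg} and \cite{XZ} to Birkar-Chen type boundedness \cite{BC, BAB, Birkar.sing} of bases and fibers) to obtain a quasi-compact class of candidates; feeding this into the multi-graded Hilbert scheme construction of Setup \ref{set:par2} then yields a finite-type $G$-scheme $H$ and the quotient stack $[H/G]$ carrying the universal $T$-equivariant Fano fibration.

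The second step is to establish lower semicontinuity of $\W(-)$ along the universal family over $[H/G]$. The natural route is via bounded complements: following \cite[6.4]{BLXZ} and the relative complement theory of \cite{Birkar.bdd}, one produces, uniformly over the bounded family, $N$-complements whose log discrepancies and relative restricted volumes (in the non-compact sense of Lemma \ref{ncrv}) vary in a constructible way. Combined with the Duistermaat-Heckman representation of Proposition \ref{DHcal} and dominated-convergence control of $\int_0^\infty e^{-x}\vol_{X|E}(-\varphi^*K_X-bxE)\,dx$, this would yield lower semicontinuity of $\W$, realized pointwise as an infimum over a uniformly controlled family of vertical filtrations.

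To upgrade the semicontinuity to a higher $\Theta$-stratification in the sense of \cite[Definition 3.17]{Od24b}, I would stratify $[H/G]$ by the locally closed loci $\mathcal{Z}_c = \{\W = c\}$ and, on each stratum, install the required morphism $\mathcal{Z}_c \times [U_\sigma/T] \to \mathcal{Z}_c$ by appealing to Conjecture \ref{conj:SZ2step}: the uniquely determined minimizing quasi-monomial valuation $v$ over each geometric point furnishes the rational polyhedral cone $\sigma$ and the isotrivial degeneration of Lemma \ref{asdegen}, and the uniqueness clause lets these cones glue across $\mathcal{Z}_c$. Universality of the multi-graded Hilbert scheme then promotes the pointwise degenerations to an actual morphism of stacks, delivering the higher-$\Theta$ structure.

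The main obstacle will be the finiteness of the set $\{c : \mathcal{Z}_c \neq \emptyset\}$, equivalently an ACC-type statement for $\W$ on the bounded family. In the compact Fano case this is handled by \cite{BLXZ} via complements, but here the additional subtlety comes from Proposition \ref{DHcal2} \eqref{divcase4}: each individual minimum $\min_b \W(b\cdot\ord_E)$ is determined by roots of an integral exponential polynomial $F \in \Z[X, e^X]$ whose coefficients depend on the piecewise data of the MMP for $(X, E)$, and controlling this family of exponential polynomials uniformly across $[H/G]$ -- so as to prove DCC for their minima -- is, to my knowledge, new; it likely requires a refinement of the incomplete $\Gamma$-function analysis of Proposition \ref{DHcal2} \eqref{divcase3}, perhaps in combination with an ACC statement analogous to those known for normalized volumes of klt singularities. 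Once such an ACC is in place, boundedness and lower semicontinuity truncate the stratification after finitely many strata and complete the construction.
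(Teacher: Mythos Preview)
The statement you are addressing is a \emph{conjecture} in the paper, not a theorem: the paper supplies no proof and explicitly leaves it open. There is thus no ``paper's own proof'' against which to compare your attempt.

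Your proposal is a heuristic strategy rather than a proof, and you seem aware of this --- each of your four steps either invokes an open conjecture (Conjecture~\ref{conj:bdd} for boundedness, Conjecture~\ref{conj:SZ2step} for the stratum-wise degenerations and their gluing) or flags a genuinely unresolved problem (the ACC/finiteness-of-strata issue in your final paragraph). The paper's surrounding discussion operates at exactly the same level: it points to bounded complements \`a la \cite[6.4]{BLXZ} and \cite{Birkar.bdd} for lower semicontinuity (as you do), and to finite-generation techniques after \cite{BCHM, ABHLX} for the family-wise $2$-step degeneration, but states outright ``We do not discuss further details in this paper.'' So your outline and the paper's heuristic remarks are broadly aligned, and neither constitutes a proof.

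If your intent was to sketch an approach, it is reasonable and largely matches the paper's own informal indications. If your intent was to claim a proof, there is a genuine gap: you assume two open conjectures and leave the finiteness of the value set $\{c : \mathcal{Z}_c \neq \emptyset\}$ unresolved. One further point the paper emphasizes that you underweight: the core difficulty in the higher $\Theta$-stratification is the \emph{properness of the evaluation map} (equivalently, existence of the family-wise degeneration over a curve base), which the paper casts as a finite-generation problem rather than the exponential-polynomial ACC you highlight; your gluing step via ``uniqueness lets these cones glue'' elides precisely this issue.
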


We closely follow the construction method of K-moduli space of 
Calabi-Yau cone in \cite{Od24a} and 
generalize it to that of 
K-polystable $T$-equivariant Fano fibrations. For that, 
there are several steps and the best proof of properness would require 
affirmative confirmation of the above conjecture: 

\begin{prop}
Consider the locus of $H$ where $\xi$-equivariant Fano fibrations are 
K-semistable and denote as $H^{\rm kss}$. If $[H^{\rm kss}/G]$ 
admits a good moduli space (resp., which is separated) in the sense of 
\cite{Alper}, 
Conjecture \ref{hts} implies that it is universally closed 
(resp., proper). 
\end{prop}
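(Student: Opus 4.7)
The plan is to verify the valuative criterion for universal closedness (and, under the separatedness hypothesis, for properness) of the good moduli space $M := [H^{\rm kss}/G]^{\rm gms}$, adapting the Langton-style iteration of Alper--Halpern-Leistner--Heinloth \cite{AHLH} to the higher $\Theta$-stratification setting as developed in \cite{Od24a, Od24b}.

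First I would fix a DVR $R$ over $\k$ with fraction field $K$ and residue field $\kappa$, together with a morphism $\Spec K \to M$. Using the universal property of the good moduli morphism $\pi\colon [H^{\rm kss}/G]\to M$, after a finite extension of $R$ this lifts to $\xi_K\colon \Spec K \to [H^{\rm kss}/G]$, and hence, since $H$ is big enough in the sense of Setup \ref{set:par2} and $\overline{\rm MH}$ is projective, one can spread $\xi_K$ out to an $R$-family $\xi_R\colon \Spec R\to [H/G]$ extending $\xi_K$ (after possibly another finite base change and applying the cone reduction of Proposition \ref{Ffka} to control the limit).

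Next, if the central fiber $\xi_R|_\kappa$ already lies in $H^{\rm kss}$ there is nothing more to do. Otherwise, by the lower semicontinuity of $\W$ provided by Conjecture \ref{hts}, it lies in some stratum $\mathcal{Z}_c$ with $c>\W(\xi_K)$, and by the same conjecture this stratum carries a universal generalized test configuration $\mathcal{Z}_c\times [U_\sigma/T]\to \mathcal{Z}_c$ coming from Lemma \ref{asdegen}. I would then perform a Langton-style modification: use the $T$-equivariant one-parameter subgroup encoded in this stratum to replace $\xi_R$ by a new extension $\xi_R'$ of $\xi_K$, obtained by blowing up along the central fiber and contracting in the direction of the associated $2$-step degeneration. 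Monotonicity of $\W$ along the $\Theta$-stratification (part of the higher $\Theta$-stratification axioms) guarantees that $\xi_R'|_\kappa\in \mathcal{Z}_{c'}$ with $c'<c$. Because $\{\mathcal{Z}_c\}_c$ is finite, this iteration terminates after finitely many steps in a lift $\Spec R\to [H^{\rm kss}/G]$, hence $\Spec R\to M$, establishing universal closedness. Under the extra hypothesis that $M$ is separated, the standard valuative criterion then upgrades this to properness.

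The main obstacle will be the Langton step: one must check that the universal degenerations furnished by the higher $\Theta$-stratification can actually be \emph{integrated} against a given $R$-family to produce a new $R$-family with strictly smaller $\W$ on the special fiber, rather than merely giving a $\kappa$-point modification. In the classical AHLH setting this is ensured by $\Theta$-reductivity and $S$-completeness of $[H^{\rm kss}/G]$; in the present equivariant Fano fibration setting one would need the analogous reductivity statement, which in turn depends on the $2$-step degeneration conjecture \ref{conj:SZ2step} and on bounding the set of possible Reeb vectors $\xi$ appearing as limits. A secondary technical point is the spreading-out of $\xi_K$ to all of $\Spec R$, which requires enough of Conjecture \ref{conj:bdd} (via \cite{BAB, BC, XZ}) to ensure that the compactified parameter space is projective and $G$-invariant in the appropriate sense.
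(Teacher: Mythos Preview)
Your overall strategy matches the paper's: the paper's proof simply invokes the higher $\Theta$-semistable reduction theorem \cite[Theorem~1.1/3.8]{Od24b} (following \cite[\S 3.5]{Od24a}), and what you have written is essentially an unpacking of that theorem in the present context. So in spirit you are on the right track.

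However, you have the direction of the weighted-volume monotonicity reversed. Lower semicontinuity of $\W$ means that the \emph{central} fiber has weighted volume \emph{at most} that of the generic fiber, so the unstable stratum satisfies $c < \W(\xi_K)$, not $c > \W(\xi_K)$. Correspondingly, the Langton-type modification coming from the higher $\Theta$-stratification \emph{increases} the weighted volume of the special fiber (compare Theorem~\ref{Mthm.bble}, where the modified central fiber has strictly larger $\W$), so one iterates with $c' > c$ until reaching the open stratum $H^{\rm kss}$. With the signs as you wrote them, the iteration would run away from the semistable locus rather than toward it.

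Your closing paragraph identifies as an ``obstacle'' precisely the content of the higher $\Theta$-semistable reduction theorem: that the universal degenerations encoded in the stratification can be integrated against an $R$-family to produce a new $R$-family with strictly improved central fiber. The paper does not re-prove this; it takes \cite[Theorem~3.8]{Od24b} (cf.\ also \cite[\S 6]{AHLH}, \cite[\S 7]{BHLINK}) as a black box, which is why its proof is a two-line citation. Once Conjecture~\ref{hts} is assumed, the hypotheses of that theorem are met and your worries about $\Theta$-reductivity, $S$-completeness, and spreading out are absorbed into the cited machinery rather than needing separate verification here.
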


\begin{proof}
The proof follows exactly the same method as \cite[\S 3.5]{Od24a}, using 
the higher $\Theta$-semistable reduction theorem 
\cite[Theorem 1.1 (Theorem 3.8 for details)]{Od24b} 
(cf., also \cite[\S 6]{AHLH}, \cite[\S 7]{BHLINK}). 
\end{proof}

Expanding the definition in 
\cite[Definition 3.17]{Od24b} 
which generalizes \cite[\S 6]{AHLH}, 
note that the higher $\Theta$-stratification  conjecture \ref{hts} means the existence of 
{\it family-wise} version of the 
predicted $2$-step degeneration (Conjecture \ref{conj:SZ2step}=\cite[Conjecture 6.4]{SZ}). 
Following the degeneration theoretic perspective 
after Lemma \ref{asdegen}, 
it is to make it simultaneous i.e., over 
higher dimensional base of the form 
$S\times U_\sigma$ with some variety $S$. 
By valuative criterion of properness, one can reduce to the case when 
$S$ is a smooth (pointed) 
curve and then (again) 
essentially a ubiquitous ``finite generation" 
type problem in birational geometry after \cite{BCHM} 
(as in \cite{ABHLX}). Indeed, once the finite generation property is 
confirmed, its spectrum automatically satisfies certain K-semistability 
as analogous to the CM minimization phenomenon (cf., \cite{Od20, Hat}). 

\subsection{Bubbling Fano fibrations}

In this subsection, we present an algebro-geometric construction - which we refer to as ``bubbling" - of certain Fano fibrations, under a technical conjectural assumption 
on some stack structure 
(Conjecture \ref{hts2}). Roughly speaking,  starting from a given degeneration of Fano fibrations, we construct “asymptotically conical” Fano fibrations in a relatively canonical way, as a kind of rescaled limit (see Theorem \ref{Mthm.bble}).

We expect that this construction can be recoverable 
in a differential geometric manner, hence the name, 
by using good K\"ahler metrics family 
and consider the differential geometric 
bubbling i.e., non-trivial rescaled limit. 
See \cite{Od24c} and differential geometric 
references therein for the special case when $\pi={\rm id}$. 
Nevertheless, we mainly focus on purely algebro-geometric side. 
See Figure 2 for the outlook of the algorithm. 

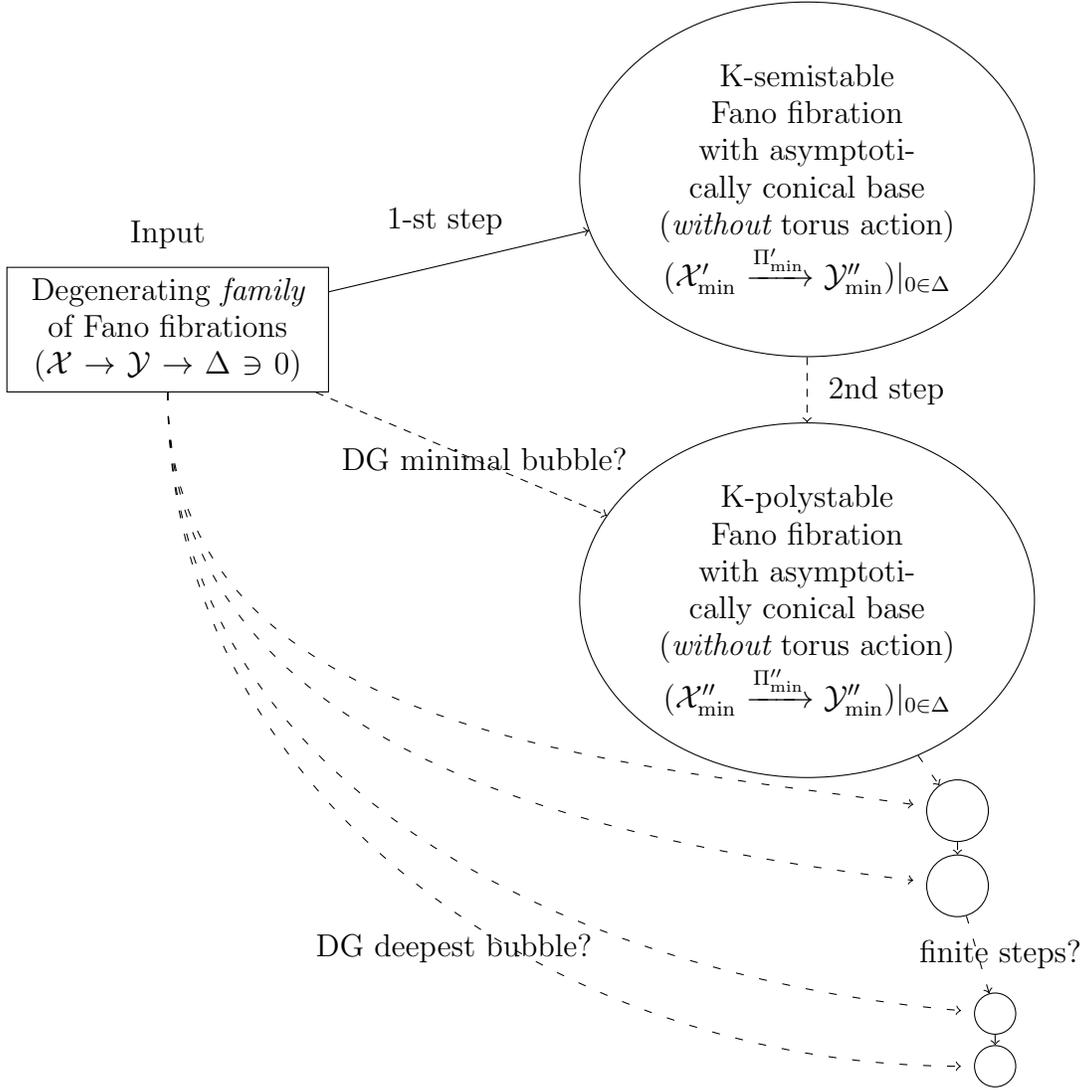
\begin{figure}\label{Fig1}
\begin{tikzpicture}
\usetikzlibrary{positioning}
    \tikzset{Start/.style={rectangle,  draw,  text centered, text width=4cm, align=center}};
    \tikzset{Process/.style={ellipse,  draw,  text centered, text width=4cm, align=center}};
     \tikzset{Process2/.style={ellipse,  draw,  text centered, text width=4cm, align=center}};  
     \tikzset{Remain/.style={circle,  draw,  text centered, text width=0.5cm, align=center}};  
     \tikzset{Remain2/.style={circle,  draw,  text centered, text width=0.2cm, align=center}}; 
 \node[Start](a)at (-1,0.8){Degenerating {\it family} of Fano fibrations\\ $(\X\to \Y\to \Delta\ni 0)$};
 \node[above=3pt of a] {Input}; 
 \node[Process](b)at (7.5,2.8){K-semistable Fano fibration \\ with asymptotically conical base \\ ({\it without} torus action) \\ 
 $(\mathcal{X}_{\rm min}'\xrightarrow{\Pi'_{\rm min}}\mathcal{Y}''_{\rm min})|_{0\in \Delta}$}; 
\node[Process2](c)at (7.5,-2.8){K-polystable Fano fibration \\ with asymptotically conical base \\ ({\it without} torus action) \\ $(\mathcal{X}_{\rm min}''\xrightarrow{\Pi''_{\rm min}}\mathcal{Y}''_{\rm min})|_{0\in \Delta}$};
 \node[Remain](d)at (9.5,-5.6){}; 
  \node[Remain](e)at (9.5,-6.6){}; 
   \node[Remain2](f)at (10,-8.3){}; 
    \node[Remain2](g)at (10,-9){}; 
    \draw[->]  (a) --(b)node[xshift=-137pt,yshift=-16pt]{$1$-st step};
       \draw[->, dashed]  (a) --(c)node[above,xshift=-122pt,yshift=+45pt]{DG minimal bubble?};
       \draw[->, dashed]  (b) --(c)node[above,xshift=+30pt,yshift=+70pt]{2nd step};   
       \draw[->, loosely dashed]  (c) --(d)node[]{};
        \draw[->, loosely dashed]  (d) --(e)node[]{};
         \draw[->, loosely dashed]  (e) --(f)node[above,xshift=+2pt,yshift=+13pt]{finite steps?};
                  \draw[->, loosely dashed]  (f) --(g)node[]{};
\draw[->, loosely dashed, out=270, in=172, shorten >=5pt]  (a) to (d);
\draw[->, loosely dashed, out=270, in=173, shorten >=5pt]  (a) to (e);
\draw[->, loosely dashed, out=270, in=175, shorten >=5pt]  (a) to (f);
\draw[->, loosely dashed, out=270, in=180, shorten >=5pt] (a) to node[above,xshift=14pt,yshift=-34pt]{DG deepest bubble?} (g);
\end{tikzpicture}

\caption{Bubbling Fano fibrations cf., Thm \ref{Mthm.bble} \& Prop. \ref{bbletermination} \\ 
    \qquad (we allow finite base changes of $\Delta$). DG stands for 
    ``Differential Geometric"} \label{AE.Od}
\end{figure}

To establish such construction (Theorem \ref{Mthm.bble}), we 
first make a technical slight generalization of the previous 
Conjecture \ref{hts}. 
As in Conjecture \ref{hts}, 
techniques by proving finite generations, by essentially reducing to 
\cite{BCHM}, should apply. 

\begin{conj}[General existence of higher $\Theta$-stratification]\label{hts2}
Take any (finite type) algebraic $\k$-stack $\mathcal{M}^o$ 
which underlies 
a $\Q$-Gorenstein faithfully family of 
$\Q$-Fano fibrations, i.e., 
$\tilde{\Pi}_{\X}\colon \tilde{\mathcal{X}}\xrightarrow{\tilde{\Pi}}\tilde{\mathcal{Y}}\xrightarrow
{\tilde{\Pi}_{\tilde{\mathcal{Y}}}}\mathcal{M}^o$ 
where $\tilde{\Pi}_{\tilde{\mathcal{Y}}}$ is a faithfully flat 
affine morphism, $\tilde{\Pi}$ is a ($\Q$-)Fano fibration, 
together with a section $\sigma$ of $\tilde{\Pi}_{\tilde{\mathcal{Y}}}$ 
i.e., $\tilde{\Pi}_{\tilde{\mathcal{Y}}}\circ \sigma={\rm id}$. 

Then, there is a monomorphism 
$\mathcal{M}^o \to \mathcal{M}$ where $\mathcal{M}$ is 
another (still finite type) 
\footnote{being parallel to the big enoughness of the previous 
subsection} quotient 
algebraic $\k$-stack 
$\mathcal{M}$ which underlies 
a $\Q$-Gorenstein family of 
$\Q$-Fano fibrations which extends $\tilde{\Pi}_{\X}$ 
and a 
higher $\Theta$-stratification on $\mathcal{M}$ 
(in the sense of \cite[\S 3]{Od24b})
defined by the weighted volume function 
which is lower semicontinuous and constructible 
i.e., finite strata of the forms 
$\{\mathcal{Z}_c:=\{\W(-)=c\}\}_{c\in 
\R}$, and it encodes the generalized test configurations of Lemma \ref{asdegen} 
familywise in the form $\mathcal{Z}_c\times [U_\sigma/T]\to \mathcal{Z}_c$. 
\end{conj}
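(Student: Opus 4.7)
The plan is to adapt the approach sketched for Conjecture \ref{hts} (for a specific parameter scheme) to the setting of an arbitrary finite-type algebraic stack $\mathcal{M}^o$ parametrizing a family of $\Q$-Fano fibrations. I would proceed in four main steps.

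First, I would construct the enlargement $\mathcal{M}$ of $\mathcal{M}^o$ in parallel to Setup \ref{set:par2}. Relatively over $\mathcal{M}^o$, take many enough homogeneous generators of the coherent sheaf of $\N$-graded $\O$-algebras $\bigoplus_{l \ge 0} \tilde{\Pi}_* \O_{\tilde{\X}}(-lrK_{\tilde{\X}/\tilde{\Y}})$, giving a closed embedding of the relative cone into a relative affine space. This yields a morphism to an appropriate multi-graded Hilbert scheme (\cite{HS, AZ}), which is of finite type under Conjecture \ref{conj:bdd}. Stratifying by \cite[Cor 24]{hull} to extract the $\Q$-Gorenstein locus and quotienting by the centralizer of the relevant torus in the ambient linear group gives $\mathcal{M}$, with $\mathcal{M}^o \to \mathcal{M}$ a monomorphism by the universal property of the Hilbert scheme.

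Second, I would establish lower semicontinuity and constructibility of $\W$ on $\mathcal{M}$. Lower semicontinuity should follow from a family version of Birkar's bounded complements, after \cite[6.4]{BLXZ} combined with the relative techniques of \cite[Theorem 1.7]{Birkar.bdd}. Constructibility of the strata $\mathcal{Z}_c := \{\W(-) = c\}$ reduces to showing that only finitely many values of $\W$ are attained on any fixed finite-type family; this can be approached by combining Proposition \ref{DHcal2}\eqref{divcase4}, which expresses $\W(\pi)$ at a (pointwise) divisorial minimizer as a value of an integral exponential polynomial determined by bounded combinatorial data, with boundedness of the underlying Fano fibrations.

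Third, I would realize the stratification as a higher $\Theta$-stratification by exhibiting the familywise generalized test configurations $\mathcal{Z}_c \times [U_\sigma/T] \to \mathcal{Z}_c$ of Lemma \ref{asdegen}. Pointwise, the input is Conjecture \ref{conj:SZ2step}: a unique weighted volume minimizing quasi-monomial valuation $v$ and its associated degeneration. To make this familywise over each stratum, the core point is to show that the relative graded algebra associated to the pointwise minimizing valuations is finitely generated. By valuative criterion type reductions, it suffices to consider the case where the base is a smooth pointed curve, and there one aims to reduce to a BCHM-type finite generation statement for an appropriate relative log Fano structure, following the strategy of \cite{ABHLX, BLXZ}; once finite generation is in hand, the spectrum produces the universal $[U_\sigma/T]$-family, and K-semistability of the limit fibers follows from CM-minimization as in \cite{Od20, Hat}.

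The hardest step is clearly the familywise finite generation in the third paragraph. In the absolute setting of \cite{ABHLX, BLXZ} one exploits the rigidity of a single K-semistable Fano variety; here the quasi-monomial minimizer $v$ may vary along the family and the associated Newton--Okounkov-type structures do not obviously assemble into a coherent relative object. A secondary difficulty is handling the genuinely quasi-monomial (non-divisorial) case uniformly, since Proposition \ref{prop:1.7} gives only a pointwise divisorial approximation; a relative approximation scheme, not currently available in the literature, would likely be required. The remaining steps (construction of $\mathcal{M}$, lower semicontinuity, constructibility) should reduce to known techniques once the relevant relative formulations of \cite{BCHM, BAB, BC, Birkar.bdd} are combined.
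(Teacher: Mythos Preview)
The statement you are attempting to prove is labeled as a \emph{Conjecture} in the paper, and the paper does not provide a proof. Immediately after stating it, the author writes that ``the main difficulty of the conjecture is to show the properness of the evaluation morphism ${\rm ev}_{(1,\cdots,1)} \colon \mathcal{Z}^{+} \to \mathcal{M}$'' and that this ``eventually \dots\ should be approachable by proving finite generation problem again, by technically but eventually reducing to \cite{BCHM}.'' So there is no paper proof to compare against; the paper offers only a heuristic indication of where the obstruction lies.

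Your proposal is therefore not a proof but a plan, and to your credit you correctly identify the same bottleneck the author does: the familywise finite generation of the graded algebra associated to the minimizing valuations (your third step). The steps you sketch before that --- enlarging $\mathcal{M}^o$ via multi-graded Hilbert schemes as in Setup~\ref{set:par2}, and invoking bounded-complement techniques for lower semicontinuity --- are in line with the paper's own expectations. But your write-up should make clear that this is a strategy for an open problem, not a proof: the relative finite generation you flag as ``hardest'' is genuinely not known, and your constructibility argument via Proposition~\ref{DHcal2}\eqref{divcase4} is also speculative, since that proposition only treats divisors \emph{on} $X$ and gives countability rather than finiteness. As it stands, the proposal is an accurate diagnosis of what would be required, matching the paper's own assessment, but it does not close the gap.
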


Note that the above 
formulation implicitly contains several 
smaller conjectures; for instance 
i.e., the lower semicontinuity of 
the weighted volume, the boundedness and hence some ACC type nature of the 
set of weighted volumes. 

The main difficulty of the conjecture 
is to show the properness of the evaluation morphism 
${\rm ev}_{(1,\cdots,1)} \colon \mathcal{Z}^{+} \to \mathcal{M}$, 
where $\mathcal{Z}^{+}\subset {\rm Map}(\Theta_\sigma,\mathcal{M})$ 
is a union of connected components, 
in the notation of \cite[\S 3, around Definition 3.3]{Od24b}. 
To prove it by the valuative criterion of 
properness (universally closedness, to be 
precise), 
eventually this should be approachable by proving finite generation 
problem again, by technically but eventually reducing to \cite{BCHM}. 

For the case of relative dimension $0$ Fano fibrations i.e., 
family of klt singularities germs, there are related discussions 
to this conjecture in 
\cite{Od24a, ZChen, Od24c}. Supposing the above, we extend the bubbling 
construction of \cite{Od24c} as follows. We also prepare the following 
notion: 

\begin{defn}[Graded negative valuation cf., \cite{SZ1}]\label{grnv}
For a projective family $\pi\colon X\to Y$ over an affine algebraic 
$\k$-scheme $Y$, 
with a relative ample line bundle 
$L$ on $X$, 
we set $R_l:=\Gamma(Y,\pi_*L^{\otimes l})=\Gamma(X,L^{\otimes l})$. 
A {\it graded negative valuation} 
\footnote{the term ``negative valuation" comes from the earlier work of S.Sun and 
J.Zhang (cf., \cite[\S 6]{SZ1}). 
Note that the minus $-d$ satisfies the axiom of valuations indeed. 
One could also call it simply ``degree function" (or generalized 
degree) since the classical 
degree of multi-variable polynomials is a typical example of $d$.}
of $\oplus_{l\in \Z_{\ge 0}}R_l$ is a function 
$d\colon (\oplus_{l\in \Z_{\ge 0}}R_l)\setminus \{0\}\to \R_{\ge 0}$ 
satisfying the following properties: 
\begin{enumerate}
    \item \label{nv1} $d(\sum_{l}x_l)=\max\{d(x_l)\mid x_l\neq 0\}$ for any $x=\sum_l x_l$ 
    where $x_l$ denotes the component of $R_l$, 
    or equivalently 
    $d$ is $\G_m$-invariant, 
    \item $d(xy)=d(x)+d(y)$, 
    \item $d(x+y)\le \max\{d(x),d(y)\}$, 
\end{enumerate}
for any $x,y \in (\oplus_{l\in \Z_{\ge 0}}R_l)$. 
For each $d$, we can define a $\R_{\ge 0}$-graded ring 
$$\gr_d(\oplus_l R_l):=\oplus_{a\in \R_{\ge 0}}\{x\in \oplus R_l\mid d(x)\le 
a\}/\{x\in \oplus R_l\mid d(x)< a\}.$$ 
As in the previous subsection, 
let $M$ denote the groupification of the image semigroup 
${\rm Im}(d) \subset \R_{\ge 0}$, set 
its dual lattice $N$ and algebraic torus $T:=N\otimes \G_m$, 
it naturally has a $T$-action. We set $M_{\ge 0}:=M\cap \R_{\ge 0}$. 
From the first condition \eqref{nv1} 
above, we can decompose this as a $\Z_{\ge 0}\times M_{\ge 0}$-graded ring 
\begin{align}\label{gr1}
\gr_d(\oplus_l R_l)=\oplus_{l\in \Z_{\ge 
0}}(\oplus_{a\in \R_{\ge 0}}\{x\in R_l\mid d(x)\le a\}/\{x\in R_l\mid d(x)< a\}).
\end{align}
\end{defn}

We denote $$Y_d:={\rm Spec}(\gr_d(\oplus_l R_l)).$$ 
Compatibly, we can consider 
a $M_{\ge 0}$-graded ring 
\begin{align}\label{gr2}
\gr_d(\Gamma(\mathcal{O}_Y)):=\oplus_{a\in \R_{\ge 0}}\{x\in \Gamma(\mathcal{O}_Y)\mid d(x)\le 
a\}/\{x\in \Gamma(\mathcal{O}_Y)\mid d(x)< a\}.
\end{align}
Then, $\gr_d(\oplus_l R_l)$ is a $\Z_{\ge 0}\times \R_{\ge 0}$-graded 
$\gr_d(\Gamma(\mathcal{O}_Y))$-algebra. If \eqref{gr1} is of finite type, 
one can consider 
relative spectra 
$${\rm Spec}_{\gr_d(\Gamma(\mathcal{O}_Y))}(\gr_d(\oplus_l R_l))$$ 
(resp., $$X_d:={\rm Proj}_{\gr_d(\Gamma(\mathcal{O}_Y))}(\gr_d(\oplus_l R_l)))$$
as affine (resp., polarized projective) $Y_d$-variety. 
In that case, there is a generalized test configuration over an affine toric variety $U_\sigma$ for a rational polyhedral cone $\sigma$ 
in $N\otimes \R$ of 
$X\to Y$ degenerating to $X_d\to Y_d$ exactly as in 
\cite[Example 2.18]{Od24b} (compare Lemma \ref{asdegen}). 

Here is the simple generalization of the 
notion by S.~Sun \cite{Song}, which 
corresponds to the case $\pi={\rm id}$. 

\begin{defn}[Fano fibration with 
asymptotically conical base]
An {\it Fano fibration 
with asymptotically conical base} means a 
Fano fibration germ $(X\xrightarrow{\pi}Y\ni 
p)$, together with a graded negative valuation $d$ in the above sense of 
Definition \ref{grnv}, such that $T\curvearrowright(X_d\to Y_d)$ 
is a $T$-equivariant (klt) Fano fibration in the sense of 
Definition \ref{def:eqff}. 

Generalizing the terminology of 
\cite[\S 5]{Song}, we call such 
Fano fibration 
with asymptotically conical base 
$[(X\xrightarrow{\pi}Y\ni 
p),d]$ 
is {\it K-polystable} (resp., {\it K-stable, K-semistable}) 
if $T\curvearrowright(X_d\to Y_d)$ 
is so as a $T$-equivariant (klt) Fano fibration. 
\end{defn}

Now we follow 
\cite[Theorem 2.4 (or cf., 1.1 for a quick overview)]{Od24c} closely 
to give a construction of certain (K-semi/polystable) 
Fano fibration with asymptotically conical base, which we call algebro-geometric minimal bubblings. 
{\it Loc.cit} treats the case when $\pi$ is trivial. 

Among other results in this paper, 
the following is the main one in this subsection. 

\begin{Thm}[Minimal bubbling Fano fibrations]\label{Mthm.bble}
Suppose the above Conjecture \ref{hts2} holds. 
Let $C$ be a pointed smooth curve with base closed point $0 \in C$, and 
consider an arbitrary {\bf $\Q$-Gorenstein family of (klt) Fano fibrations} over $C\ni 0$ as 
$$\Pi_\X\colon \mathcal{X}\xrightarrow{\Pi} \mathcal{Y}\xrightarrow{\Pi_{\Y}} C\ni 0$$ with the section 
$\sigma\colon C\to \mathcal{Y}$ ($\Pi_\Y \circ \sigma={\rm id}$) 
i.e., $-K_\X$ is $\Q$-Cartier and 
$\Pi$-ample, such that $\X_s:=\Pi_\X^{-1}(s)\to \Y_s:=\Pi_{\Y}^{-1}(s)\ni \sigma(s)$ 
for closed point $s\in S$ 
has same 
weighted volumes for $s\neq 0$ while it becomes {\bf strictly 
smaller} for $s=0$. 

Then, 
after a finite base change $R\colon C'\to C$ of 
$0\in C$, 
there is a modification along the preimage over $s=0$ 
to have another family of Fano fibrations 
\begin{align}
\Pi_{\X'_{\rm min}}\colon \mathcal{X}_{\rm min}'\xrightarrow{\Pi'_{\rm min}} 
\mathcal{Y}'_{\rm min}\xrightarrow{\Pi_{\Y'_{\rm min}}} C',\\ 
\text{(resp., }\Pi_{\X''_{\rm min}}\colon \mathcal{X}_{\rm min}''\xrightarrow{\Pi''_{\rm min}} 
\mathcal{Y}''_{\rm min}\xrightarrow{\Pi_{\Y''_{\rm min}}} C'\text{)}, 
\end{align}
which satisfy the following properties: 
\begin{enumerate}
    \item 
    The induced family over $C' \setminus R^{-1}(0)$ 
    $$(R\circ \Pi_{\X'_{\rm min}})^{-1}(C\setminus 0)
    \to (R\circ \Pi_{\Y'_{\rm min}})^{-1}(C\setminus 0)\to C'\setminus R^{-1}(0)$$ 
    (resp. $(R\circ \Pi_{\X''_{\rm min}})^{-1}(C\setminus 0)
    \to (R\circ \Pi_{\Y''_{\rm min}})^{-1}(C\setminus 0)\to C'\setminus R^{-1}(0)$) 
    is (both) isomorphic to the fiber product of 
    $\X\to \Y\to C$ with $C'\setminus R^{-1}(0)\to C\setminus 0\hookrightarrow C$, so that in particular 
    we have the unique section $\sigma'$ (resp., $\sigma''$) 
    compatible with $\sigma$ (by the valuative criterion of properness). 
    \item (Increase of weighted volume) For any $s'\in C'$ with $R(s')=0$, 
    the weighted volume of $\Pi_{\X'_{\rm min}}^{-1}(s')\to \Pi_{\Y'_{\rm min}}^{-1}(s')\ni \sigma'(s')$ 
    (and $\Pi_{\X''_{\rm min}}^{-1}(s')\to \Pi_{\Y''_{\rm min}}^{-1}(s')\ni \sigma''(s')$)
    is {\bf strictly larger} than 
    $\X_0\to \Y_0\ni \sigma(0)$. 
    \item (K-semistability resp., K-polystability) There is a graded negative valuation $d$ of  
    $\Pi_{\X'_{\rm min}}$ (resp.,  of $\Pi_{\X''_{\rm min}}$) 
    with which they are {\bf K-semistable resp., K-polystable 
    (klt) Fano fibration with asymptotically conical base.} 
    
    That is, 
    \eqref{gr1} and \eqref{gr2} are both finite type $\k$-algebra and 
    $X_d\to Y_d$ is K-semistable (resp., K-polystable) 
    in the sense of \cite{SZ}. 
\end{enumerate}
\end{Thm}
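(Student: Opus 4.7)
The plan is to translate the hypothesized stratification of Conjecture \ref{hts2} into a valuative surgery on the curve $C$, and then exhibit the resulting degeneration as a Fano fibration with asymptotically conical base, following closely the pattern of \cite[Theorem 2.4]{Od24c} for the case $\pi={\rm id}$. First, I would encode the input family $\Pi_\X\colon \mathcal{X}\to\mathcal{Y}\to C$ together with its section $\sigma$ as a morphism $\mu\colon C\to\mathcal{M}^o$, where $\mathcal{M}^o$ is the classifying stack of the $\Q$-Gorenstein faithfully flat family $\tilde\Pi_\X$ described in Conjecture \ref{hts2}. Invoking the conjecture produces an open immersion $\mathcal{M}^o\hookrightarrow\mathcal{M}$ into a (still finite type) quotient stack equipped with the higher $\Theta$-stratification $\{\mathcal{Z}_c=\{\W(-)=c\}\}_c$. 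By the hypothesis that $\W$ drops at $s=0$, the morphism $\mu$ sends $C\setminus\{0\}$ into some stratum $\mathcal{Z}_{c_0}$, while $\mu(0)\in\mathcal{Z}_{c_1}$ with $c_1<c_0$.

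Next, I would apply the higher $\Theta$-semistable reduction theorem \cite[Theorem 3.8]{Od24b} (in the spirit of \cite[\S 6]{AHLH} and \cite[\S 7]{BHLINK}) to this data. The stratification structure $\mathcal{Z}_c\times[U_\sigma/T]\to\mathcal{Z}_c$ encodes the family-wise analogue of the $2$-step degeneration of Lemma \ref{asdegen}, and it is precisely what powers the reduction. The conclusion yields, after a ramified finite base change $R\colon C'\to C$, a lift $\mu'\colon C'\to\mathcal{M}$ agreeing with $\mu\circ R$ over $C'\setminus R^{-1}(0)$ but sending the preimage of $0$ into a strictly higher stratum $\mathcal{Z}_{c_1'}$ with $c_1'>c_1$. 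Pulling back the tautological fibration on $\mathcal{M}$ through $\mu'$ produces the desired $(\mathcal{X}'_{\rm min}\to\mathcal{Y}'_{\rm min}\to C')$, and its K-semistability follows because the reduction lands in the semistable locus of its stratum. The graded negative valuation $d$ arises as the slope valuation associated to the one-parameter degeneration $[U_\sigma/T]$ that the reduction morphism factors through, so that after applying $\gr_d$ one recovers the $T$-equivariant Fano fibration classified by the image in $\mathcal{Z}_{c_1'}$; this gives the asymptotically conical base structure. For the K-polystable version $\Pi''_{\rm min}$, I would chain this with a further $1$-parameter special degeneration inside the stratum provided by the $2$-step degeneration Conjecture \ref{conj:SZ2step} (equivalently, another application of higher $\Theta$-semistable reduction to the K-semistable locus of $\mathcal{Z}_{c_1'}$); extendability across $0$ is then a separatedness statement, again a consequence of the higher $\Theta$-stratification.

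The hard part will be the finite generation step, needed to ensure that the graded rings \eqref{gr1} and \eqref{gr2} attached to $d$ are genuinely of finite type over $\k$, so that $X_d\to Y_d$ exists as a scheme. By the valuative criterion, one reduces the universal closedness appearing inside Conjecture \ref{hts2} to producing, over a DVR, a finitely generated $\R$-filtration on a family of section rings of $\pi$-ample divisors. In the absolute K-stability case this is exactly the finite generation of \cite{ABHLX,LXZ}, resolved via the special case of \cite{BCHM}. In the present relative Fano fibration setting the analogous input is the relative MMP of \cite{BCHM} combined with relative boundedness from Birkar's work \cite{BAB,BC,Birkar.sing,Birkar.bdd}; translating the minimization of $\W$ into a CM-minimization statement as in \cite{Od20,Hat} should then promote the minimizer to an actual finitely generated degeneration, confirming the K-semistability of the limit automatically. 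The residual technical points are the compatibility of the base change $R\colon C'\to C$ with the section $\sigma$ (handled by the valuative criterion of properness applied to $\Pi_{\Y'_{\rm min}}$) and the strict inequality on weighted volumes at $R^{-1}(0)$, which is built into the definition of a higher $\Theta$-stratification since $\mu'$ moves the central point to a strictly higher stratum. The finite generation obstacle is where the bulk of the work beyond Conjecture \ref{hts2} will lie.
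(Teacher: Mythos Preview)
Your overall strategy coincides with the paper's: both feed the family into a quotient stack, invoke Conjecture~\ref{hts2} to obtain the higher $\Theta$-stratification, and then apply the higher $\Theta$-semistable reduction of \cite[Theorem~3.8]{Od24b} to replace the central fiber. However, there are two substantive differences worth noting.

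First, the paper does not take $\mathcal{M}^o$ abstractly as ``the classifying stack''; it builds it concretely. One first applies the $2$-step degeneration Conjecture~\ref{conj:SZ2step} to the general fiber $X\to Y$ to obtain the K-semistable $T\curvearrowright(X_v\to Y_v)$ and its vector $\xi\in N\otimes\R$. One then embeds $Y_v\hookrightarrow\A^n$ via homogeneous generators, forms the flat parameter space ${\rm Def}^{-}(Y_v)$ of $\xi$-\emph{negative} affine deformations of $Y_v$, takes the relative multi-Hilbert scheme ${\rm MH}(\pi_{\tilde{\mathcal{Y}}})$ of \cite{HS,AZ} over it, passes to the universal hull \cite{hull} to cut out the $\Q$-Gorenstein locus, and finally sets $\mathcal{M}^o:=[M(\pi_{\tilde{\mathcal{Y}}})/T]$. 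This explicit construction is what guarantees that after $\Theta$-reduction the new central fiber sits inside a $\xi$-negative deformation family over $V(\tau')\simeq\A^1$ with central fiber $X_v\to Y_v$; irreducibility of $X_v,Y_v$ then forces the associated graded negative valuation $d$ to have rank~$1$, yielding the asymptotically conical structure directly. Your sketch retrieves $d$ from the $[U_\sigma/T]$-factor of the stratification instead, which is plausible but less transparent about why $X_d\to Y_d$ is exactly the K-semistable $X_v\to Y_v$.

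Second, for the K-polystable refinement the paper invokes \cite[Theorem~1.3, \S 3]{LWX} combined with the construction of $\X''_{\rm min}$ in \cite[Theorem~2.4]{Od24c}, rather than a second pass of $\Theta$-reduction. Finally, your last paragraph misplaces the burden: finite generation of \eqref{gr1}, \eqref{gr2} is \emph{not} a residual obstacle in this proof, since it is absorbed into the assumed Conjecture~\ref{hts2} (which already posits the family-wise generalized test configurations of Lemma~\ref{asdegen}). That discussion belongs to the expected proof of Conjecture~\ref{hts2} itself, not to the present theorem.
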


\begin{proof}
We follow the construction (proof) of 
\cite[Theorem 2.4]{Od24c}
closely, which corresponds to the case when the morphism $\Pi$ is identity. 
Given the recent general higher $\Theta$-semistable reduction theorem 
\cite[Theorem 3.8]{Od24b} (cf., also \cite[\S 7]{BHLINK}), 
the main discussions here is to set up a certain parameter space (stack). 

After the $2$-step degeneration 
conjecture \ref{conj:SZ2step}($=$\cite[Conjecture 6.4]{SZ}), which we 
assume, we set the K-semistable degeneration of $X\xrightarrow{\pi}Y$ 
as $N\otimes \G_m=T\curvearrowright (X_v\xrightarrow{\pi_v}Y_v)$. 
Here, by the valuation $v$, direction $\xi\in N\otimes \R$ is determined. 
(Original \cite[Conjecture 6.4]{SZ} denotes them as 
$Z\to W$.) We take $n(\gg 1)$ homogeneous generators of 
$\Gamma(W,\O_W)\curvearrowleft 
T$ and embed $W$ into $\A^n$ accordingly. 
Following the proof of \cite[2.4]{Od24c}, 
we take the defining equations of $Y_v\subset \A^n$ 
as $f_1,\cdots,f_N$ and set $d_i:=\deg_\xi(f_i)$. 
Then, as in {\it loc.cit}, consider affine $\xi$-negative 
deformations of $Y_v$ as 
$\{V(\{f_i+h_i\}_{i=1,\cdots,N})\mid 
0<{\rm deg}_{\xi}(h_i)<d_i\}_{h_i}$, 
apply the flattening stratification to its natural parameter space 
(affine space for the coefficients of $h_i$), so that 
we obtain a $T$-equivariant 
affine flat deformations over an 
affine $\k$-scheme ${\rm Def}^{-}(Y_v)\subset \A^m$ for $m\gg 0$. 
We denote the obtained deformation as $\tilde{\mathcal{Y}}\xrightarrow{\pi_{\tilde{\mathcal{Y}}}}{\rm Def}^-(Y_v)$. 
After this preparation, we consider the relative multi-Hilbert 
scheme over ${\rm Def}^{-}(Y_v)$ as \cite{AZ} and \cite[1.1,1.2]{HS} 
which we denote as ${\rm MH}(\pi_{\tilde{\mathcal{Y}}})\to {\rm Def}^-(Y_v)$. 
This ${\rm MH}(\pi_{\tilde{\mathcal{Y}}})$ parametrizes 
polarized projective fibrations over affine deformations of 
$Y_v$. Then we take the universal hull 
\cite[1.2]{hull} to it to obtain 
$M(\pi_{\tilde{\mathcal{Y}}})(\to {\rm MH}(\pi_{\tilde{\mathcal{Y}}}))$ 
which parametrizes (klt) $\Q$-Gorenstein families of $X$ with 
their fibering over affine deformations of $Y_v$. 
Consider the stack $\mathcal{M}^o:=[M(\pi_{\tilde{\mathcal{Y}}})/T]$ and 
apply Conjecture \ref{hts2} to obtain a quotient $\k$-stack $\mathcal{M}$ 
with higher $\Theta$-stratification $\{\mathcal{Z}_c\}_c$. 
Then, now we can apply \cite[Theorem 3.8]{Od24b} as in the proof of 
\cite[2.4]{Od24c}. 
In particular, we obtain $\xi$-negative 
$\Q$-Gorenstein degeneration family along $V(\tau')\simeq \A^1$ in the proof of 
\cite[Theorem 3.8]{Od24b} 
\footnote{To be precise we apply 
\cite[Theorem 3.8]{Od24b} 
with DVR $R:=\O_{C,0}$. 
The outcome is, for a Galois covering 
$C'\to C$, ${\rm Gal}(C'/C)$-invariant 
$C'$-valued point of $\mathcal{M}$. }
of a Fano fibration, 
which we denote by 
$\Pi_{\X'_{\rm min}}\colon \mathcal{X}_{\rm min}'\xrightarrow{\Pi'_{\rm min}} 
\mathcal{Y}'_{\rm min}\xrightarrow{\Pi_{\Y'_{\rm min}}} C'$, 
with the central fibers $X_v\to Y_v$. 
Since $X_v$ and $Y_v$ are both irreducible, 
this degeneration corresponds to graded negative valuation $d$ 
of rank $1$. 

We next construct 
$\Pi_{\X''_{\rm min}}\colon \mathcal{X}_{\rm min}''\xrightarrow{\Pi''_{\rm min}} 
\mathcal{Y}''_{\rm min}\xrightarrow{\Pi_{\Y''_{\rm min}}} C'$, 
following the proof of \cite[2.4]{Od24c} again. It can be done in a 
completely parallel manner by combining the method of 
\cite[Theorem 1.3, \S 3]{LWX} 
and 
the construction of $\X''_{\rm min}$ in 
the proof of \cite[2.4]{Od24c}. 
We complete the proof. \end{proof}

\begin{defn}[Minimal bubblings]
We call the above 
$\Pi_{\X'_{\rm min}}^{-1}(s')\to \Pi_{\Y'_{\rm min}}^{-1}(s')\ni \sigma'(s')$ 
    (resp., $\Pi_{\X''_{\rm min}}^{-1}(s')\to \Pi_{\Y''_{\rm min}}^{-1}(s')\ni \sigma''(s')$) for $R(s')=0$
{\it minimal K-semistable bubbling Fano fibrations} 
(resp., {\it minimal K-polystable bubbling Fano fibrations}) 
after \cite{Song, dBS, Od24c}. 
Note that 
by the Galois (${\rm Gal}(C'/C)$-)invariance of 
the above bubbling 
construction, these do not depend on 
$s'\in R^{-1}(0)$. 
\end{defn}

\begin{prop}\label{bbletermination}
    (Finite time termination) 
We continue to use the setup of Theorem \ref{Mthm.bble} (still assuming 
Conjecture \ref{hts2}).     If we repeat the replacement $\Pi_\X$ by $\Pi_{\X''_{\rm min}}$ 
    finite times, then it stops in the sense that 
    the weighted volume functions of the fibers over the base curve become constant.    
\end{prop}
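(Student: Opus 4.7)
The plan is to argue that iterating the bubbling procedure produces a strictly increasing sequence of central-fiber weighted volumes that is bounded above by a fixed constant, and then to conclude by invoking a finiteness statement for the values of $\W$ attained within a bounded family of Fano fibrations. The three ingredients I would combine are: (i) invariance of the generic fiber under bubbling, (ii) strict monotonicity of $\W$ at the central fiber at each step, and (iii) an ACC-type finiteness statement for $\W$.

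For (i), item (i) of Theorem \ref{Mthm.bble} identifies $\Pi_{\X''_{\rm min}}$ with the pullback of $\Pi_\X$ over $C'\setminus R^{-1}(0)$; iterating, the fibers away from the preimage of $0$ are carried to fibers of the original family, so the weighted volume $w_{\rm gen}:=\W(\X_s\to \Y_s\ni \sigma(s))$ for $s\neq 0$ is a fixed constant throughout the procedure. By the lower semicontinuity of $\W(-)$ postulated in Conjecture \ref{hts2}, the weighted volume at every central fiber over $R^{-1}(0)$ satisfies $\W(\text{central})\le w_{\rm gen}$.

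For (ii), item (ii) of Theorem \ref{Mthm.bble} gives that each bubbling step strictly increases the central-fiber weighted volume. After $k$ iterations we therefore obtain a strictly increasing sequence
\[
w_0 < w_1 < w_2 < \cdots \le w_{\rm gen}.
\]
For (iii), I would argue that only finitely many values of $\W$ can appear in such a sequence. By Conjecture \ref{conj:bdd}, the class of ($\Q$-)Fano fibrations of fixed dimensions with weighted volume $\ge w_0>0$ is bounded, so all iterates live in a single finite-type parameter stack $\mathcal{M}$ built as in the proof of Theorem \ref{Mthm.bble} and large enough to contain them. Applying Conjecture \ref{hts2} to this $\mathcal{M}$, the encoded higher $\Theta$-stratification $\{\mathcal{Z}_c\}_c$ on $\mathcal{M}$ has by assumption only finitely many strata, so the set of $\W$-values attained on $\mathcal{M}$ is finite. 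Combined with the strict monotonicity in (ii), the sequence $\{w_k\}$ must stabilize after finitely many steps; once $w_k=w_{k+1}$ the strict inequality in (ii) prohibits any further nontrivial bubbling, and $\W$ is constant on the base curve.

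The main obstacle is step (iii): establishing the finiteness of $\W$-values on a bounded parameter stack of Fano fibrations, which is the $\W$-analogue of the ACC of normalized local volumes and of log canonical thresholds. It is precisely the constructibility and finite-strata part of Conjecture \ref{hts2} that supplies this, so modulo that conjecture the argument is essentially formal. A potentially more direct alternative, avoiding the full force of Conjecture \ref{hts2}, would be to invoke the arithmetic rigidity suggested by Proposition \ref{DHcal2} \eqref{divcase4}: divisorial weighted volumes lie in a countable set of exponential-period numbers governed by integral exponential polynomials, and combining this with the boundedness of iterates (Conjecture \ref{conj:bdd}) should already suffice to prevent the $w_k$ from accumulating strictly below $w_{\rm gen}$.
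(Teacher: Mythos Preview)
Your core argument is correct and is essentially the paper's proof: the strict increase of the central-fiber weighted volume at each step (Theorem~\ref{Mthm.bble}(ii)) together with the finiteness of the strata $\{\mathcal{Z}_c\}_c$ in Conjecture~\ref{hts2} forces termination. The paper's proof is exactly this one observation, in a single sentence.

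That said, some of your scaffolding is unnecessary or slightly misdirected. The upper bound $w_k\le w_{\rm gen}$ via lower semicontinuity is not needed once you know the value set is finite. More importantly, your invocation of Conjecture~\ref{conj:bdd} to place all iterates in a single parameter stack is a detour: that conjecture concerns $T$-\emph{equivariant} Fano fibrations, whereas the central fibers produced by bubbling need not carry a torus action. The correct bookkeeping is already internal to the proof of Theorem~\ref{Mthm.bble}: each bubbled family arises as a $C'$-point of the very stack $\mathcal{M}$ furnished by Conjecture~\ref{hts2}, so the central fiber at every step lands in one of the finitely many level sets $\mathcal{Z}_c$ of that $\mathcal{M}$. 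No separate boundedness input is required. Your proposed alternative via Proposition~\ref{DHcal2}\,\eqref{divcase4} is speculative (countability of divisorial minima is far from an ACC statement for $\W(\pi)$) and should be dropped.
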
    

\begin{proof}
This follows immediately because we only have finite strata in the 
higher $\Theta$-stratification as assumed in Conjecture \ref{hts2}. 
\end{proof}

\begin{Ques}
Clarify the differential geometric meaning of the above algebro-geometrically constructed 
K-polystable minimal bubbling Fano fibrations $\Pi_{\X''_{\rm min}}$, 
as a certain bubbling (rescaled limits as metric spaces). 
\end{Ques}

Note that these construction depend on a priori non-canonical construction of parameter space (stack) $\mathcal{M}$, which is morally 
regarded as ``finite dimensional slice" of 
infinite dimensional deformation space of $T\curvearrowright(X_v\to Y_v)$, as well as 
on its special test configuration 
to the polystable limit. This a priori 
non-canonicity may parallel to the 
non-uniqueness of the metrics on $\pi^{-1}(t) 
(t\neq 0)$. 
Recall that in the case of $\pi={\rm id}$, 
under some conditions, 
\cite{Song, dBS, Od24c} proves that this 
$\Pi_{\X''_{\rm min}}$ 
can be understood as bubbling limit of K\"ahler-Einstein metrics 
under certain situations. 
In that setup, \cite{dBS, Od24a} 
observes canonicity of the bubblings in some 
examples. 

\begin{Rem}
Recall that \cite[Cor 2.9]{Od24c}, as a consequence of 
{\it op.cit} Theorem 2.4, is a variant 
\footnote{it is weaker than the usual resolution of singularities of Hironaka type but our 
process is more canonical with connections to differential geometry. 
Also, it is more global construction than classical Zariski's local 
uniformization} 
of resolution (alteration) of log terminal singularities. 
Similarly, 
above Theorem \ref{Mthm.bble} and Proposition \ref{bbletermination} 
can be morally regarded as a {\it family/fibration version of} 
alteration of log terminal singularities. 
\end{Rem}


\begin{ack}
The author thanks S.Sun and 
J.Zhang for many helpful discussions on their original \cite{SZ} 
which led to this notes. 
The author also thanks S.Mori and R.Conlon for 
helpful comments on the draft. We also 
sincerely thank the referee for careful reading 
and helpful suggestions. 
This paper is written for a special volume of PAMQ in 
honor of Prof. Caucher Birkar. As this paper and the main proof  
in \cite{SZ} 
also show, his 
various works in birational geometry have played a crucial role 
in the recent advances on K-stability 
and K\"ahler geometry. 

During the work, the author was partially 
supported by Grant-in-Aid for Scientific Research (B) 23H01069, 21H00973, 
Grant-in-Aid for Scientific Research (A) 21H04429, 
20H00112, 
25H00586, 
and Fund for the Promotion of Joint International Research (Fostering Joint International Research) 23KK0249, 
and the National Science 
Foundation under Grant No. DMS-1928930, while he was in
residence at the Simons Laufer Mathematical Sciences Institute
(formerly MSRI) in Berkeley, California, during the Fall 2024. 
\end{ack}


\vspace{5mm} \footnotesize \noindent
Contact: {\tt yodaka@math.kyoto-u.ac.jp} \\
Department of Mathematics, Kyoto University, Kyoto 606-8285. 

\end{document}